\newtheorem{thm}{Theorem}[section]
\newtheorem{lemma}[thm]{Lemma}
\newtheorem{proposition}[thm]{Proposition}
\newtheorem{assumption}{Assumption}[section]
\newtheorem{definition}[thm]{Definition}
\newtheorem{rem}[thm]{Remark}
\numberwithin{equation}{section}
\title{Approximation of exit times for one-dimensional linear and growth diffusion processes.}
\begin{document}

\author{S. Herrmann and N. Massin\\[5pt]
\small{Institut de Math{\'e}matiques de Bourgogne (IMB) - UMR 5584, CNRS,}\\
\small{Universit{\'e} de Bourgogne Franche-Comt\'e, F-21000 Dijon, France} \\
\small{Samuel.Herrmann@u-bourgogne.fr}\\
\small{Nicolas.Massin@u-bourgogne.fr}
}
\maketitle

\begin{abstract}
In order to approximate the exit time of a one-dimensional diffusion process, we propose an algorithm based on a random walk. Such an algorithm was already introduced in both the Brownian context and the Ornstein-Uhlenbeck context, that is for particular time-homogeneous diffusion processes. Here the aim is therefore to generalize this efficient numerical approach in order to obtain an approximation of both the exit time and position for either a general linear diffusion or a growth diffusion. The main challenge of such a generalization is to handle with time-inhomogeneous diffusions. The efficiency of the method is described with particular care through theoretical results and numerical examples.
\end{abstract}
\textbf{Key words and phrases:} exit time, linear diffusion, growth diffusion, random walk, generalized spheroids, stochastic algorithm\\
\textbf{2010 AMS subject classifications:} primary: 65C05; secondary: 60J60, 60G40, 60G46.
\section{Introduction}
In many domains, the simulation of the first exit time for a diffusion plays a crucial role. In reliability analysis, for instance, first passage times and exit times are directly related to lifetimes of engineering systems. In order to emphasize explicit expressions of the lifetime distribution, it is quite usual to deal with simplified models like Ornstein-Uhlenbeck processes. Indeed they satisfy the mean reverting property which is essential for modeling degradation processes. In mathematical finance studying barrier options also requires to describe exit times since it is of prime interest to estimate if the underlying stock price stays in a given interval. 
In the simple Black-Scholes model, the distribution of the first exit time is well-known. In more complex models corresponding to general diffusion processes, such an explicit expression is not available and requires the use of numerical approximations.

Several methods have been introduced in order to approximate first exit times. The classical and most common approximation method is the Euler–Maruyama scheme based on a time-discretization procedure. The exit time of the diffusion process is in that case replaced by the exit time of the scheme. The approximation is quite precise but requires to restrict the study on a given fixed time interval on one hand and to describe precisely the probability for the diffusion to exit inbetween two consecutive nodes of the time grid on the other hand.
In this study, we aim to introduce a random walk in order to approximate the diffusion exit time from a given interval. Let us introduce $(X_t,\, t\ge 0)$ the unique solution of the stochastic differential equation:
\[
dX_t=b(t,X_t)\,dt+\sigma(t,X_t)\,dW_t,\quad t\ge 0,
\]
where $(W_t,\, t\ge 0)$ stands for a one-dimensional Brownian motion. Let us also fix some interval $I=[a,b]$ which strictly contains the starting position $X_0=x$. We denote by $\mathcal{T}$ the diffusion first exit time:
\[
\mathcal{T}=\inf\{t\ge 0:\ X_t\notin [a,b]\}.
\]
Our approach consists in constructing a random walk $(T_n,X_n)_{n\ge 0}$ on $\mathbb{R}_+\times \mathbb{R}$ which corresponds to a skeleton of the Brownian paths. In other words, the sequence $(T_n,X_n)$ belongs to the graph of the trajectory. Moreover we construct the walk in such a way that $(T_n,X_n)$ converges as time elapses towards the exit time and location $(\mathcal{T},X_{\mathcal{T}})$. It suffices therefore to introduce a stopping procedure in the algorithm to achieve the approximation scheme. Of course, such an approach is interesting provided that $(T_n,X_n)$ is easy to simulate numerically. For the particular Brownian case, the distribution of the exit time from an interval has a quite complicated expression which is difficult to use for simulation purposes (see, for instance \cite{sacerdote-telve-zucca}) whereas the exit distribution from particular time-dependent domains, for instance
the spheroids also called \emph{heat balls}, can be precisely determined. These time-dependent domains are characterized by their boundaries: 
 \begin{equation}
 \psi_\pm(t) = \pm\sqrt{t\log\left(\frac{d^2}{t}\right)}, \quad \text{for } t \in [0,d^2],
 \label{sphbm}
 \end{equation}
where the parameter $d>0$ corresponds to the size of the spheroid. The first time the Brownian motion paths $(t, W_t)$ exits from the domain\\ 
$\{(t,x):\ |x|\le \psi_+(t)\}$, denoted by $\tau$, is well-known. Its probability density function \cite{lerche} is given by
\begin{equation}
p(t)= \frac{1}{d \sqrt{2 \pi}} \sqrt{\frac{1}{t}\log\left(\frac{d^2}{t}\right)},\quad t\ge 0.
\end{equation}
It is therefore easy to generate such an exit time since $\tau$ and $d^2Ue^{-N^2}$are identically distributed. Here $U$ and $N$ are independent random variables,  $U$ being uniformly distributed on $[0,1]$ and $N$ being a standard gaussian random variable.
Let us notice that the boundaries of the spheroids satisfy the following bound:
\begin{equation}
\vert \psi_\pm(t)\vert \leqslant \frac{d}{\sqrt{e}}, \quad \forall t\in [0,d^2].
\label{boundmb}
\end{equation}
This remark permits to explain the general idea of the algorithm. First we consider $(T_0,X_0)$ the starting time and position of the Brownian paths, that is $(0,x)$. Then we choose the largest parameter $d$ possible such that the spheroid starting in $(T_0,X_0)$ is included in the domain $\mathbb{R}_+\times [a,b]$. We observe the first exit time of this spheroid and its corresponding exit location, this couple is denoted by $(T_1,X_1)$. Due to the translation invariance of the Brownian motion, we can construct an iterative procedure, just considering $(T_1,X_1)$ like a starting time and position for the Brownian motion. So we consider a new spheroid included in the interval and $(T_2,X_2)$ shall correspond to the exit of this second spheroid and so on. Step by step we construct a random walk on spheroids also called WOMS algorithm (Walk On Moving Spheres) which converges towards the exit time and position $(\mathcal{T},W_{\mathcal{T}})$. This sequence is stopped as soon as the position $X_n$ is
close enough to the boundary of the considered interval. The idea of this algorithm lies in the definition of spherical processes and the walk on spheres introduced by Müller \cite{mul} and used in the sequel by Motoo \cite{mot} and Sabelfeld \cite{sab1} \cite{sab2}. It permits also in some more technical advanced way to simulate the first passage time for Bessel processes \cite{Deaconu-Herrmann}.

In this study, we focus our attention on diffusions which are strongly related to the Brownian motion: they can be expressed as functionals of the Brownian motion that is $X_t=f(t,W_t)$. The
idea is to use this link to adapt the Brownian algorithm in an appropriate way. This link implies changes on the time-dependent domains for which the exit problem can be expressed in a simpler way.
For these diffusion families, we present the random walk algorithm (WOMS), describe the approximation error depending on the stopping procedure and emphasize the efficiency of the method. We describe the mean number of generalized spheroids necessary to obtain the approximated exit time.

\section{WOMS algorithm for L-class diffusions}
The Walk on Spheroids already introduced for the Ornstein-Uhlenbeck process in \cite{Herrmann-Massin-1} permits to approximate the exit time in an efficient way. We aim to extend such numerical procedure to a wider  class of stochastic processes. We focus our attention to the family of L-class diffusions (linear-type diffusions) which generalizes the Ornstein-Uhlenbeck processes. For such diffusions, all the coefficients are time-dependent. Moreover they are based on a strong relation with a one-dimensional Brownian motion.

\subsection{L-class diffusions}
This particular family of diffusions was already introduced in \cite{potzelberger-wang}.
\begin{definition}[L-class diffusions]
We call L-class diffusion any solution of 
\begin{equation}
dX_t = (\alpha(t)X_t + \beta(t))dt + \tilde{\sigma}(t)dW_t\quad t\ge 0,
\label{edsl}
\end{equation}
 where $\alpha$ and $\beta$ are real continuous functions, $\tilde{\sigma}$ is a continuous non-negative function and $(W_t)_{ t\ge 0}$ is a one-dimensional Brownian motion.
\end{definition}

We solve equation \eqref{edsl} in a classical way. Let us introduce 
\begin{equation}
\theta(t):=-\displaystyle{\int^t_0 \alpha(s)ds}.
\label{theta}
\end{equation}

\begin{lemma}
The unique solution of \eqref{edsl} is given by
\begin{equation*}
X_t = X_0\, e^{-\theta(t)} + e^{-\theta (t) }\displaystyle{\int^t_0 e^{\theta(s)} \beta(s) ds}+ e^{-\theta(t)} \displaystyle{\int^t_0 e^{\theta(s)} \tilde{\sigma}(s)dW_s} ,\quad t\ge 0.
\end{equation*}
\end{lemma}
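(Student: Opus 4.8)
The plan is to verify the proposed formula by a standard integrating-factor argument, which splits naturally into an existence check (the given expression solves the SDE) and a uniqueness check.

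For existence, I would set $Y_t := e^{\theta(t)} X_t$ and apply Itô's formula to this product, treating $t \mapsto e^{\theta(t)}$ as a $C^1$ function of bounded variation (so no second-order term contributes from it). Since $\theta'(t) = -\alpha(t)$ by \eqref{theta}, one gets $dY_t = e^{\theta(t)}\big(dX_t - \alpha(t) X_t\, dt\big) = e^{\theta(t)}\big(\beta(t)\, dt + \tilde\sigma(t)\, dW_t\big)$, using the defining equation \eqref{edsl}. Integrating from $0$ to $t$ yields $Y_t = Y_0 + \int_0^t e^{\theta(s)} \beta(s)\, ds + \int_0^t e^{\theta(s)} \tilde\sigma(s)\, dW_s$, and multiplying through by $e^{-\theta(t)}$ (noting $\theta(0) = 0$ so $Y_0 = X_0$) gives exactly the claimed formula. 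One should briefly note that the stochastic integral is well-defined: $\tilde\sigma$ and $\theta$ are continuous, hence $s \mapsto e^{\theta(s)}\tilde\sigma(s)$ is continuous and therefore locally bounded, so it lies in the appropriate class of integrands on every finite interval.

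For uniqueness, I would invoke the classical result that an SDE with coefficients that are Lipschitz in the space variable (uniformly on compact time intervals) and of at most linear growth admits a unique strong solution; here the drift $(t,x) \mapsto \alpha(t) x + \beta(t)$ is affine in $x$ with continuous — hence locally bounded — coefficients, and the diffusion coefficient $\tilde\sigma(t)$ does not depend on $x$ at all, so both hypotheses hold on each $[0,T]$. Alternatively, and perhaps more cleanly, uniqueness follows directly from the integrating-factor computation itself: if $X$ and $\tilde X$ are two solutions, then $Z_t := e^{\theta(t)}(X_t - \tilde X_t)$ satisfies $dZ_t = 0$ with $Z_0 = 0$ by the same Itô computation applied to the difference, whence $X_t = \tilde X_t$ for all $t$ almost surely.

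I do not anticipate a serious obstacle here; this is a routine linear SDE. The only point requiring a modicum of care is the application of Itô's formula to the product $e^{\theta(t)} X_t$ with a time-dependent deterministic factor — one must keep track of the $\partial_t$ term producing $\theta'(t) = -\alpha(t)$ and confirm that no cross-variation or second-order term in $t$ appears since $e^{\theta(\cdot)}$ is smooth and of finite variation. Everything else is bookkeeping.
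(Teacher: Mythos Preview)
Your proposal is correct and follows essentially the same approach as the paper: the paper applies It\^o's formula to $g(t,X_t) = X_t e^{\theta(t)}$, obtaining $d(e^{\theta(t)}X_t) = e^{\theta(t)}\beta(t)\,dt + e^{\theta(t)}\tilde\sigma(t)\,dW_t$, which is exactly your integrating-factor computation. Your additional remarks on well-definedness of the stochastic integral and on uniqueness are more detailed than what the paper provides, but the core argument is identical.
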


\begin{proof}
Let us consider $g(t,x) = xe^{\theta(t)}$. The statement is therefore an easy consequence of Itô's formula:
\begin{align*}
d(g(t,X_t)) &=-\alpha(t) X_t e^{\theta(t)} dt + e^{\theta(t)}dX_t \\
&= -\alpha(t) X_t e^{\theta(t)} dt+ e^{\theta(t)}(\alpha(t) X_t + \beta(t))dt + e^{\theta(t)} \tilde{\sigma}(t)dW_t\\
&= e^{\theta(t)}\beta(t)dt + e^{\theta(t)}\tilde{\sigma}(t)dW_t, \quad t\ge 0.
\end{align*} \end{proof}
This expression of the stochastic process $X$ is actually not handy for the construction of the algorithm. We would like, as for Onstein-Uhlenbeck processes in \cite{Herrmann-Massin-1}, to transform the martingale part of the diffusion into a time-changed Brownian motion. However, we cannot apply such a transformation in the L-class framework, that is why we shall proceed in a quite different way.\\
To that end, let us suppose that $X$, solution of \eqref{edsl}, can be expressed using a time-changed Brownian motion:
\begin{equation}
X_t = f_L(t,x_0 + W_{\rho(t)}) ,\quad \forall t\ge 0,
\label{eqf}
\end{equation}
with $\rho(0) = 0$, $\rho'(t) >0$, for all $t \geqslant 0$ and $f_L(0,x)=x$ for any $x\in\mathbb{R}$.

\begin{lemma}
\label{frontL}
Let $\theta$ the function defined in \eqref{theta}. Then the  unique  weak solution of \eqref{edsl} is the process $(X_t,\ t\ge 0)$ defined in \eqref{eqf} with

\begin{align}
f_L(t,x) = \frac{\tilde{\sigma}(t)}{\sqrt{\rho'(t)}}x + c(t) &,\quad c(t) = e^{-\theta(t)}\int_0^t \beta(s) e^{\theta(s)}ds \nonumber\\
&\text{ and }  \rho(t) = \int^t_0 \tilde{\sigma}(s)^2e^{2\theta(s)} ds.
\label{defrho}
\end{align}
\end{lemma}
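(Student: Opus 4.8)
The plan is to impose the structural assumption \eqref{eqf}, to pin down $f_L$ and $\rho$ by running Itô's formula and matching coefficients with \eqref{edsl}, and then to conclude with a uniqueness-in-law argument. The one genuinely probabilistic ingredient is the time-change identity: for a deterministic $C^1$ strictly increasing $\rho$ with $\rho(0)=0$, the process $t\mapsto W_{\rho(t)}$ is a continuous Gaussian martingale with bracket $\rho(t)$, so by the Dambis--Dubins--Schwarz theorem there is, on a suitable space, a standard Brownian motion $(\tilde W_t)$ with $W_{\rho(t)}=\int_0^t\sqrt{\rho'(s)}\,\dd\tilde W_s$; hence $B_t:=x_0+W_{\rho(t)}$ solves $\dd B_t=\sqrt{\rho'(t)}\,\dd\tilde W_t$ with $B_0=x_0$. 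This is precisely what makes \eqref{eqf} a \emph{weak} solution.

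Assuming $f_L\in C^{1,2}$, Itô's formula applied to $X_t=f_L(t,B_t)$ gives
\[
\dd X_t=\Big(\partial_t f_L(t,B_t)+\tfrac12\,\rho'(t)\,\partial_{xx}f_L(t,B_t)\Big)\dd t+\sqrt{\rho'(t)}\,\partial_x f_L(t,B_t)\,\dd\tilde W_t .
\]
Matching the martingale part with $\tilde\sigma(t)\,\dd\tilde W_t$ forces $\sqrt{\rho'(t)}\,\partial_x f_L(t,x)=\tilde\sigma(t)$ for every $x$ (the law of $B_t$ has full support for $t>0$), so $f_L$ is affine in space, $f_L(t,x)=h(t)x+c(t)$ with $h(t)=\tilde\sigma(t)/\sqrt{\rho'(t)}$, and $h(0)=1$, $c(0)=0$ because $f_L(0,\cdot)=\mathrm{id}$. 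Then $\partial_{xx}f_L\equiv0$, and identifying the drift with $\alpha(t)X_t+\beta(t)=\alpha(t)\big(h(t)B_t+c(t)\big)+\beta(t)$, then separating the coefficient of $B_t$ from the constant term, yields the two scalar linear ODEs
\[
h'(t)=\alpha(t)\,h(t),\qquad c'(t)=\alpha(t)\,c(t)+\beta(t),
\]
with $h(0)=1$, $c(0)=0$. With $\theta$ as in \eqref{theta}, their unique solutions are $h(t)=e^{-\theta(t)}$ and $c(t)=e^{-\theta(t)}\int_0^t\beta(s)e^{\theta(s)}\,\dd s$; and the relations $h=\tilde\sigma/\sqrt{\rho'}$ and $h=e^{-\theta}$ together give $\rho'(t)=\tilde\sigma(t)^2e^{2\theta(t)}$, that is $\rho(t)=\int_0^t\tilde\sigma(s)^2e^{2\theta(s)}\,\dd s$, which are exactly the expressions in \eqref{defrho}.

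To finish, I would run the computation in reverse and then appeal to uniqueness. Substituting these explicit $f_L$ and $\rho$ into \eqref{eqf} and applying Itô's formula as above shows that the resulting process does solve \eqref{edsl} with driving Brownian motion $\tilde W$ and $X_0=f_L(0,x_0)=x_0$, so a weak solution of the announced form exists. Since $x\mapsto\alpha(t)x+\beta(t)$ and $\tilde\sigma(t)$ are continuous in $t$ and globally Lipschitz with linear growth in $x$, pathwise uniqueness holds for \eqref{edsl}, hence by Yamada--Watanabe uniqueness in law holds as well, and every weak solution of \eqref{edsl} therefore has the law of the process in \eqref{eqf}. The only points requiring care are the time-change identity invoked in the first step and the condition $\rho'>0$, which holds as soon as $\tilde\sigma>0$ (the non-degenerate regime of interest); granting these, the remainder is the routine integration of two linear scalar ODEs and I would expect the write-up to be short.
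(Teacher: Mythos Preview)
Your proposal is correct and rests on the same two probabilistic ingredients as the paper's proof: It\^o's formula applied to $f_L(t,x_0+W_{\rho(t)})$ and the Dambis--Dubins--Schwarz time-change identifying $W_{\rho(t)}$ with $\int_0^t\sqrt{\rho'(s)}\,\dd\tilde W_s$ in law. The only real difference is one of presentation: the paper takes the formulas in \eqref{defrho} as given and simply \emph{verifies} by It\^o that $\hat X_t:=f_L(t,x_0+\int_0^t\sqrt{\rho'(s)}\,\dd W_s)$ solves \eqref{edsl}, then invokes DDS to pass to the time-changed form; you instead \emph{derive} $f_L$ and $\rho$ by matching the drift and martingale parts, solving the resulting linear ODEs, and only then verify. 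Your route is slightly longer but more transparent about where the expressions come from, and your explicit mention of Yamada--Watanabe for uniqueness in law is a point the paper leaves implicit (relying on the preceding lemma's strong solution). Either write-up would be short and acceptable.
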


\begin{proof}
Let us first introduce the process $(M_t)_{t \in \mathbb{R}_+}$ defined by 
\begin{equation}
M_t := \int^t_0 \sqrt{\rho'(s)}dW_s
\end{equation}
where $W_t$ is the Brownian motion introduced in \eqref{edsl}. We notice that this process is a martingale with respect to the Brownian filtration and $\left\langle M\right\rangle_t = \int^t_0 \rho'(s)ds = \rho(t)$. We introduce the process $\hat{X}_t := f_L(t,x_0+ M_t)$. Using It\^o’s formula we get 
\begin{equation*}
d\hat{X}_t = \frac{\partial f_L}{\partial t}(t,M_t)dt + \frac{1}{2}\rho'(t)\frac{\partial^2 f_L}{\partial x^2}(t,M_t)dt + \frac{\partial f_L}{\partial x}(t,M_t)\sqrt{\rho'(t)}dW_t.
\end{equation*}
Computing all functions appearing in the previous equality, the stochastic process $\hat{X}_t$ is solution of \eqref{edsl}.
Using Dambis \& Dunbins-Schwarz Martingale representation theorem (see Theorem
V.1.6 p.170 \cite{rev}), there exists a Brownian motion $B_t$ such that 
\begin{equation}
M_t = B_{\left\langle M\right\rangle _t},\quad \forall t\ge 0.
\end{equation}
We deduce that $M_t \sim W_{\rho_L(t)}$ and therefore $(\hat{X}_t)_{t\ge 0} \sim (X_t)_{t\ge 0}$ with\\
 $X_t = f_L(t, x_0+W_{\rho(t)})$.
\end{proof}


\begin{rem}
\label{shiftL}
If the starting time associated to the study of the L-class diffusion is not the origin but another time $t_0$, then we also obtain an expression similar to \eqref{eqf}. Let $Y_t$ be the unique weak solution of

\[
\left \{
\begin{array}{c @{=} l}
    dY_t &  (\alpha(t+t_0)Y_t + \beta(t+ t_0))dt + \tilde{\sigma}(t+t_0)dW_t, \quad t\ge 0\\
     Y_0 & X_{t_0}. \\
\end{array}
\right.
\]
Then
\begin{equation}
Y_t = f_L(t+t_0,X_{t_0}e^{-\int^{t_0}_{0} \alpha(s) ds} + W_{\rho(t+t_0)-\rho(t_0)}) -e^{\int^{t + t_0}_{t_0} \alpha(s) ds}c(t_0),
\end{equation}
with $f_L$ given by Lemma \ref{frontL}.
\end{rem}

\subsection{Spheroids associated to a L-class diffusion process}
\subsubsection*{Introducing the exit time of the spheroid.}
We determine a specific spheroid for the diffusion by using the link with the time-changed Brownian motion. The boundaries of the spheroid associated to the diffusion starting at time $t_0$ in $x_0$ are denoted by $\psi_\pm^L(t;\,t_0,x_{0})$ and the corresponding exit time is
\begin{equation*}
\tau_L^{t_0} = \inf\{t >0:\ Y_t^L \notin [\psi_-^L(t;\,t_0,x_{0}),\psi_+^L(t;\,t_0,x_{0})]\}.
\end{equation*}

\begin{proposition}
\label{spheroidL}
Let us consider the spheroid starting in $(t_0, X_{t_0})$ with boundaries defined by
\begin{align*}
\psi_\pm^L(t;\,t_0,X_{t_0})& = 
e^{-\theta(t+t_0)}\,\psi_{\pm}(\rho(t+ t_0)-\rho(t_0)) + c(t+t_0)\\
&+ (X_{t_0}-c(t_0))e^{\int_{t_0}^{t+ t_0} \alpha(s) ds}
\end{align*}
for all $t\ge 0$, then the associated exit time satisfies
\begin{equation}
\tau_L^{t_0} \overset{d}{=} \rho^{-1}_L(\tau +\rho_L(t_0)) - t_0
\label{reltempsL}
\end{equation}
where $\tau =\inf\{ u > 0 \ : W_u \notin [\psi_-(t),\psi_+(t)]\}$, $\psi_\pm$ being defined in \eqref{sphbm}.
\end{proposition}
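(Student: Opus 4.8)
The plan is to reduce the exit of the (shifted) $L$-diffusion from its spheroid to the exit of a standard Brownian motion from the standard heat ball \eqref{sphbm}, by combining the representation of Remark~\ref{shiftL} with a deterministic time change. First I would make that representation explicit with the help of Lemma~\ref{frontL}. Since $\rho'(t)=\tilde\sigma(t)^2e^{2\theta(t)}$ one has $\tilde\sigma(t)/\sqrt{\rho'(t)}=e^{-\theta(t)}$, hence $f_L(t,x)=e^{-\theta(t)}x+c(t)$; moreover $e^{-\int_0^{t_0}\alpha(s)ds}=e^{\theta(t_0)}$ and $e^{\int_{t_0}^{t+t_0}\alpha(s)ds}=e^{\theta(t_0)-\theta(t+t_0)}$. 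Substituting these into the formula of Remark~\ref{shiftL} and writing $h(t):=\rho(t+t_0)-\rho(t_0)$ gives
\[
Y_t \;=\; g(t)+e^{-\theta(t+t_0)}\,W_{h(t)},\qquad g(t):=c(t+t_0)+\big(X_{t_0}-c(t_0)\big)e^{\int_{t_0}^{t+t_0}\alpha(s)ds}.
\]
Comparing with the boundaries in the statement shows $\psi_\pm^L(t;t_0,X_{t_0})=g(t)+e^{-\theta(t+t_0)}\psi_\pm(h(t))$, so $Y_t-\psi_\pm^L(t;t_0,X_{t_0})=e^{-\theta(t+t_0)}\big(W_{h(t)}-\psi_\pm(h(t))\big)$. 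Since $e^{-\theta(t+t_0)}>0$, this turns the exit event $\{Y_t\notin[\psi_-^L,\psi_+^L]\}$ into $\{W_{h(t)}\notin[\psi_-(h(t)),\psi_+(h(t))]\}$, that is, the standard heat-ball exit event read through the time change $s=h(t)$.

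Next I would invert the time change. Because $\rho'>0$, the map $h$ is a continuous, strictly increasing bijection from $[0,\infty)$ onto $[0,\rho(\infty)-\rho(t_0))$ (with $\rho(\infty):=\lim_{t\to\infty}\rho(t)$), satisfying $h(0)=0$ and $h^{-1}(s)=\rho^{-1}(s+\rho(t_0))-t_0$. For $t>0$ the exit event at time $t$ is equivalent to the standard heat-ball exit event at time $h(t)>0$; taking the infimum over $t$ and using that $h$ is an increasing bijection yields, on the probability space carrying the weak solution $Y$ together with the Brownian motion $W$ of Remark~\ref{shiftL}, the pathwise identity $\tau_L^{t_0}=h^{-1}(\tau)=\rho^{-1}(\tau+\rho(t_0))-t_0$, where $\tau$ is the exit time of $W$ from the standard spheroid \eqref{sphbm}. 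Passing to distributions, and recalling that such a standard spheroid exit time is distributed as $d^2Ue^{-N^2}$ (equivalently has density $p$), gives \eqref{reltempsL}.

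The one step that needs genuine care is the last one: the identity $\tau_L^{t_0}=h^{-1}(\tau)$ holds only when $\tau$ lies in the range $[0,\rho(\infty)-\rho(t_0))$ of $h$, so that $\rho^{-1}(\tau+\rho(t_0))$ is well defined. This is exactly where the admissible spheroid size $d$ enters: the standard heat ball \eqref{sphbm} closes at time $d^2$, hence $\tau\le d^2$ almost surely, and it suffices to have $d^2\le\rho(\infty)-\rho(t_0)$ — a constraint automatically met by the way $d$ is chosen in the algorithm, since the spheroid is required to fit inside $\mathbb{R}_+\times[a,b]$ and in the time-changed picture it occupies a sub-interval of the range of $\rho$. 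Apart from this domain check, the argument is just the two exponential simplifications and a rescaling of the exit event, so I do not anticipate further difficulty.
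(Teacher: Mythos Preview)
Your proof is correct and follows essentially the same route as the paper: both use the representation of Remark~\ref{shiftL} (with $f_L(t,x)=e^{-\theta(t)}x+c(t)$) to write $Y_t=g(t)+e^{-\theta(t+t_0)}W_{h(t)}$, observe that the spheroid boundaries equal $g(t)+e^{-\theta(t+t_0)}\psi_\pm(h(t))$, and then invert the strictly increasing time change $h$ to obtain $\tau_L^{t_0}=\rho^{-1}(\tau+\rho(t_0))-t_0$. Your additional care about the range of $h$ (ensuring $\tau\le d^2$ lies in that range) is a welcome detail that the paper leaves implicit.
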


\begin{proof}
By definition,
\begin{align*}
&\tau_L^{t_0} = \inf\{t >0:Y_t \notin [\psi^L_-(t;\,t_0,X_{t_0}),\psi^L_+(t;\,t_0,X_{t_0})]\}\\
&=\inf\Big\{t >0:e^{-\theta(t+t_0)}\,W_{\rho(t+t_0)-\rho(t_0)} + c(t+t_0) + (X_{t_0} -c(t_0))e^{\int^{t + t_0}_{t_0} \alpha(s) ds}\\
&\quad \quad \quad \quad  \quad \quad \quad \quad \notin [\psi^L_-(t;\,t_0,X_{t_0}),\psi^L_+(t;\,t_0,X_{t_0})]\Big\}.
\end{align*}
Using $\psi^L_{\pm}$ introduced in the statement, we obtain the following expression for $\tau_L^{t_0}$:
\begin{align*}
&\inf\Big\{t >0:W_{\rho(t+t_0)-\rho(t_0)} \notin [\psi_{-}(\rho(t+ t_0)-\rho(t_0)),\psi_{+}(\rho(t+ t_0)-\rho(t_0))]\Big\}\\
&=\inf\{\rho^{-1}(u + \rho(t_0)) -t_0>0:\ W_u \notin [\psi_-(u),\psi_+(u)]\}\\
&=\rho_L^{-1}(\tau +\rho_L(t_0)) - t_0,
\end{align*}
where $\tau =\inf\{ u > 0:\ W_u \notin [\psi_-(u),\psi_+(u)]\}.$
\end{proof}

\subsubsection*{Size determination of the spheroids}

To define a WOMS algorithm for the L-class diffusions, we need to determine a suitable size for the spheroids in order to stay fully contained in the considered interval. Such size can be chosen by describing both the minimum and the maximum of the spheroid boundaries.\\
The size of the Brownian spheroid introduced in \eqref{sphbm} depends on a scaling parameter $d>0$, the support of the associated boundaries $\psi_\pm$ being therefore equal to $[0, d^2]$. Since the generalized spheroids used for L-class diffusion are directly linked to the Brownian ones, the parameter $d$ also changes their size and the boundaries $\psi_\pm^L$ are defined on the support $[0,\rho^{-1}(d^2 + \rho(t_0)) - t_0]$. Let us now precise this parameter $d$.
 
\begin{proposition}
\label{parameterL}
Let $m > 0$ and $0<\gamma<1$. For any $(x_0,t_0) \in [a,b]\times\mathbb{R}^+$ we define a parameter $d=d(x_0,t_0)$ such that the spheroid associated to the L-class diffusion starting in $(t_0,x_0)$ is totally included in $[a_{\gamma,x_0}, b_{\gamma,x_0}]$. Here $a_{\gamma,x_0}$ and $b_{\gamma,x_0}$ stands for $a_{\gamma,x} = a+ \gamma(x-a)$ and $b_{\gamma,x} = b- \gamma(b-x)$. This parameter is given by

\begin{equation}\label{paraml}
d=\left \{
\begin{array}{c @{\text{ if }} l}
    \frac{\min(1, \kappa_+)}{\Delta_m}(b_{\gamma,x_0}-x_0) &  b-x_0 \leqslant x_0-a\\[8pt]
    \frac{\min(1, \kappa_-)}{\Delta_m}(x_0-a_{\gamma,x_0})& x_0 -a \leqslant b-x_0
\end{array}
\right.
\end{equation}
where 
\begin{equation}
\Delta_m =e^{-\theta(t_0)}e^{\int_{t_0}^{t_0 +m} \vert\alpha(s)\vert ds}\left(\frac{1}{\sqrt{e}} + \sqrt{\int_{t_0}^{t_0+m} \frac{\vert\beta(s)+ x_0\,\alpha(s)\vert^2}{\tilde{\sigma}(s)^2}ds}\right),
\label{deltam}
\end{equation}
and $\kappa_\pm$ are defined by the following equations:
\begin{equation*}
 \kappa_+(b_{\gamma,x_0}-x_0) = \Delta_m \sqrt{\rho(t_0+m) -\rho(t_0)}\\
\end{equation*}
and
\begin{equation*}
  \kappa_-(x_0 -a_{\gamma,x_0}) =  \Delta_m \sqrt{\rho(t_0+m) -\rho(t_0)}.
\end{equation*}
\end{proposition}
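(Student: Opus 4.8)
The plan is to control the graph of the generalized spheroid $\psi_\pm^L(\,\cdot\,;t_0,x_0)$ on its full support and show that both boundaries stay inside $[a_{\gamma,x_0},b_{\gamma,x_0}]$ whenever $d$ is taken as in \eqref{paraml}. Starting from the explicit formula in Proposition~\ref{spheroidL}, I would write
\[
\psi_\pm^L(t;t_0,x_0)-x_0 = e^{-\theta(t+t_0)}\psi_\pm\big(\rho(t+t_0)-\rho(t_0)\big) + \Big(c(t+t_0)-x_0 e^{\int_{t_0}^{t+t_0}\alpha(s)ds}+ (x_0-c(t_0))e^{\int_{t_0}^{t+t_0}\alpha(s)ds}\Big),
\]
and reorganize the deterministic part. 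The first step is a purely analytic estimate of this deterministic drift term: using $c(t+t_0)-e^{\int_{t_0}^{t+t_0}\alpha}\,c(t_0) = e^{-\theta(t+t_0)}\int_{t_0}^{t+t_0}\beta(s)e^{\theta(s)}ds$ together with $x_0(1-e^{\int_{t_0}^{t+t_0}\alpha}) = -e^{-\theta(t+t_0)}\int_{t_0}^{t+t_0}x_0\alpha(s)e^{\theta(s)}ds$, I can rewrite the drift contribution as $e^{-\theta(t+t_0)}\int_{t_0}^{t+t_0}(\beta(s)+x_0\alpha(s))e^{\theta(s)}ds$. I would then bound this by Cauchy--Schwarz against the measure $\tilde\sigma(s)^2 e^{2\theta(s)}ds$, which produces exactly the square-root term in $\Delta_m$ multiplied by $\sqrt{\rho(t+t_0)-\rho(t_0)}$.

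The second step handles the Brownian-spheroid term: by the uniform bound \eqref{boundmb}, $|\psi_\pm(u)|\le d/\sqrt e$ for all $u$ in the support, so $|e^{-\theta(t+t_0)}\psi_\pm(\rho(t+t_0)-\rho(t_0))| \le e^{-\theta(t+t_0)} d/\sqrt e$. Combining with Step~1 and factoring out $e^{-\theta(t+t_0)}$, then bounding $e^{-\theta(t+t_0)}=e^{-\theta(t_0)}e^{\int_{t_0}^{t+t_0}\alpha}$ by $e^{-\theta(t_0)}e^{\int_{t_0}^{t_0+m}|\alpha|}$ — this is where the restriction $t\le m$ and the time horizon in $\Delta_m$ enter — I get
\[
\big|\psi_\pm^L(t;t_0,x_0)-x_0\big| \le \Delta_m\Big(d + \sqrt{\rho(t_0+m)-\rho(t_0)}\cdot\mathbf{(\text{correction})}\Big),
\]
and after a careful splitting of $\Delta_m$ into its $1/\sqrt e$ part and its integral part, the term $\sqrt{\rho(t+t_0)-\rho(t_0)}\le \sqrt{\rho(t_0+m)-\rho(t_0)}$ is absorbed. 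The role of $\kappa_\pm$ is then to interpolate: if $d\,\Delta_m \le \sqrt{\rho(t_0+m)-\rho(t_0)}\,\Delta_m$ is already below the available half-width $b_{\gamma,x_0}-x_0$ (resp. $x_0-a_{\gamma,x_0}$), one may take $d$ as large as that half-width over $\Delta_m$; otherwise $d$ is capped so that $d\,\Delta_m$ equals the half-width. The $\min(1,\kappa_\pm)$ is precisely this dichotomy, and one checks it makes the right-hand side above $\le b_{\gamma,x_0}-x_0$ on the smaller-gap side, hence a fortiori $\le b-x_0$ and $\ge -(x_0-a)$ on the other side.

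The main obstacle I anticipate is bookkeeping rather than conceptual: one must track the support of the generalized spheroid, namely $t\in[0,\rho^{-1}(d^2+\rho(t_0))-t_0]$, and verify that this is contained in $[0,m]$ so that all the bounds using $\int_{t_0}^{t_0+m}$ are legitimate — this should follow from the choice $d\le \sqrt{\rho(t_0+m)-\rho(t_0)}$ forced implicitly by the $\min$ with $\kappa_\pm$, since then $d^2+\rho(t_0)\le \rho(t_0+m)$ and monotonicity of $\rho$ gives the support bound. A secondary subtlety is getting the asymmetric constants $a_{\gamma,x_0},b_{\gamma,x_0}$ right: the $\gamma$-shrinkage guarantees a strictly positive gap $b_{\gamma,x_0}-x_0>0$ even when $x_0$ is close to the true boundary $b$, which is what makes $d>0$ well-defined, and the two cases in \eqref{paraml} simply ensure we always measure against the nearer shifted endpoint. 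Once the estimate $|\psi_\pm^L(t;t_0,x_0)-x_0|\le \min(b_{\gamma,x_0}-x_0,\,x_0-a_{\gamma,x_0})$ is established on the whole support, the inclusion $[\psi_-^L,\psi_+^L]\subset[a_{\gamma,x_0},b_{\gamma,x_0}]\subset[a,b]$ is immediate and the proof concludes.
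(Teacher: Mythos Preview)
Your proposal follows essentially the same route as the paper: rewrite the deterministic part as $e^{-\theta(t+t_0)}\int_{t_0}^{t+t_0}(\beta(s)+x_0\alpha(s))e^{\theta(s)}\,ds$, apply Cauchy--Schwarz against $\tilde\sigma(s)^2e^{2\theta(s)}\,ds$, use $|\psi_\pm|\le d/\sqrt e$, bound $e^{-\theta(t+t_0)}\le e^{-\theta(t_0)}e^{\int_{t_0}^{t_0+m}|\alpha|}$ on $t\le m$, and read $\kappa_\pm$ as the device enforcing $d^2\le\rho(t_0+m)-\rho(t_0)$.

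One point needs sharpening. After Cauchy--Schwarz the drift contribution carries the factor $\sqrt{\rho(t+t_0)-\rho(t_0)}$, and you bound this by $\sqrt{\rho(t_0+m)-\rho(t_0)}$. The paper instead uses the tighter bound $\sqrt{\rho(t+t_0)-\rho(t_0)}\le d$, valid because on the spheroid's support $\rho(t+t_0)-\rho(t_0)\le d^2$. This is what allows one to factor $d$ out of \emph{both} terms and obtain the clean estimate
\[
\big|\psi_\pm^L(t;t_0,x_0)-x_0\big|\ \le\ d\,\Delta_m,
\]
so that the inclusion conditions become exactly $d\,\Delta_m\le b_{\gamma,x_0}-x_0$ and $d\,\Delta_m\le x_0-a_{\gamma,x_0}$, matching the formula in \eqref{paraml}. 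With your coarser bound the displayed inequality you wrote (of the form $\Delta_m(d+\sqrt{\rho(t_0+m)-\rho(t_0)}\cdot(\ldots))$) does not collapse to $d\,\Delta_m$, and the specific value of $d$ in the statement would not be justified. Once you replace $\sqrt{\rho(t_0+m)-\rho(t_0)}$ by $d$ in that step, everything else in your outline goes through verbatim.
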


\begin{rem}
\begin{itemize}
 \item The previous statement consists in finding $d$ such that
\[
\left \{
\begin{array}{c}
    d \leqslant \frac{1}{\Delta_m}(b_{\gamma,x_0}-x_0),\\[5pt]
    d \leqslant \frac{1}{\Delta_m}(x_0-a_{\gamma,x_0}),\\[5pt]
    d^2 \leqslant \rho(t_0+m) -\rho(t_0).
\end{array}
\right.
\]
The last condition in particular leads to $t\leqslant m$ since $\rho$ is a strictly increasing function.
\item It is possible to let $m$ depend on the couple $(t_0,x_0)$ which should permit to obtain bigger spheroids which are still included in the interval. Nevertheless for numerical purposes, such a procedure slows down drastically the algorithm we are going to present.
\item The choice of the constant $m$ is important, since it either slows down or speeds up the algorithm.
\item It is also possible to replace $x_0$ by $\max(\vert a\vert, \vert b \vert)$ in the definition of $\Delta_m$ which therefore becomes independent of the starting position $x_0$. Nevertheless such a replacement slows down the algorithm.
\end{itemize}
\end{rem}
\begin{proof}[Proof of Proposition \ref{parameterL}]
Let us first point out an upper bound for $\psi_+^L$ starting in $(t_0,x_0)$. We first require that  $d^2 \leqslant \rho(t_0+m) -\rho(t_0)$. Let us define $\mathcal{R}_+^L(t):=\psi_+^L(t;\,t_0,x_{0})-x_0$.  By definition
\begin{align*}
\mathcal{R}_+^L(t) &= e^{-\theta(t_0+t)}\left(\psi_+(\rho(t+t_0) -\rho(t_0)) + \int_{t_0}^{t_0+t} \beta(s) e^{-\int_0^s \alpha(u)du}ds\right)\\
&\quad +x_0\left(e^{\int_{t_0}^{t+t_0}\alpha(u)du}-1\right)
\end{align*}
Recalling \eqref{boundmb}, we obtain
\begin{align*}
\mathcal{R}_+^L(t) &\le e^{-\theta(t_0+t)}\left(\frac{d}{\sqrt{e}} + \int_{t_0}^{t_0+t} \beta(s) e^{\theta(s)}ds\right)+x_0\, e^{-\theta(t_0+t)} \left(e^{\theta(t_0)}-e^{\theta(t+t_0)}\right)
\end{align*}
\begin{align*}
\mathcal{R}_+^L(t) &\leqslant e^{-\theta(t_0+t)}\left(\frac{d}{\sqrt{e}} + \int_{t_0}^{t_0+t} \beta(s) e^{-\int_0^s \alpha(u)du}ds\right)\\
& +x_0 \,e^{-\theta(t_0+t)}\left(\int_{t_0}^{t+t_0}\alpha(s)e^{-\int_0^s \alpha(u)du}\right)\\
&\leqslant e^{-\theta(t_0)+\int_{t_0}^{t_0 +t}\vert\alpha(s)\vert ds}\left(\frac{d}{\sqrt{e}} + \int_{t_0}^{t_0+t} \frac{\vert\beta(s)+ x_0\,\alpha(s)\vert}{\tilde{\sigma}(s)}\,\tilde{\sigma}(s) e^{-\int_0^s \alpha(u)du}ds\right),
\end{align*}
since $\tilde{\sigma}$ is a positive function. Using Cauchy-Schwarz's inequality, we obtain the following upper-bound for $\mathcal{S}_+^L(t):=e^{\theta(t_0)}e^{-\int_{t_0}^{t_0 +t}\vert\alpha(s)\vert ds}\,\mathcal{R}_+^L(t)$:
\begin{align*}
\mathcal{S}_+^L(t)&\le \frac{d}{\sqrt{e}} + \left(\int_{t_0}^{t_0+t} \frac{\vert\beta(s)+ x_0\,\alpha(s)\vert^2}{\tilde{\sigma}(s)^2}\,ds\ \int_{t_0}^{t_0 + t}\tilde{\sigma}(s)^2 e^{-2\int_0^s \alpha(u)du}ds\right)^{1/2}\\
&= \frac{d}{\sqrt{e}} + \left(\int_{t_0}^{t_0+t} \frac{\vert\beta(s)+ x_0\,\alpha(s)\vert^2}{\tilde{\sigma}(s)^2}ds\right)^{1/2}\left(\rho(t+t_0) -\rho(t_0)\right)^{1/2}.
\end{align*}
Using $\rho(t_0+t) -\rho(t_0) \leqslant d^2$ and $t \leqslant m$, leads to
\begin{align*}
\mathcal{R}_+^L(t) &\leqslant de^{-\theta(t_0)+\int_{t_0}^{t_0 +m} \vert\alpha(s)\vert ds}\left(\frac{1}{\sqrt{e}} + \sqrt{\int_{t_0}^{t_0+m} \frac{\vert\beta(s)+ x_0\,\alpha(s)\vert^2}{\tilde{\sigma}(s)^2}ds}\right)\\
&= d \Delta_m.
\end{align*}
Under the condition $d \Delta_m + x_0 \leqslant b_{\gamma,x_0}$, we observe that the spheroid belongs to the interval  $d \Delta_m + x_0 \leqslant b_{\gamma,x_0}$. Therefore we shall choose
\begin{equation}
d \leqslant \frac{1}{\Delta_m}(b_{\gamma,x_0}-x_0).
\label{magel}
\end{equation}
Let us now deal similarly with a lower-bound of $\psi_-^L$. We define \[\mathcal{R}_-^L(t):=\psi_-^L(t;\,t_0,x_{0}) -x_0.\] Hence
\begin{align*}
\mathcal{R}_-^L(t) &= e^{-\theta(t_0+t)}\left(\psi_-(\rho(t+t_0) -\rho(t_0)) + \int_{t_0}^{t_0+t} \beta(s) e^{-\int_0^s \alpha(u)du}ds\right)\\
&\quad +x_0\left(e^{\int_{t_0}^{t+t_0}\alpha(u)du}-1\right)\\
&\geqslant e^{-\theta(t_0+t)}\left(-\frac{d}{\sqrt{e}} + \int_{t_0}^{t_0+t} (\beta(s)+ x_0\,\alpha(s))\ e^{-\int_0^s \alpha(u)du}ds\right)\\
&\geqslant e^{-\theta(t_0+t)}\left(-\frac{d}{\sqrt{e}} - \int_{t_0}^{t_0+t} \vert\beta(s)+ x_0\,\alpha(s)\vert e^{-\int_0^s \alpha(u)du}ds\right)\\
&\geqslant -e^{-\theta(t_0)}e^{\int_{t_0}^{t_0 +m} \vert\alpha(s)\vert ds}\left(\frac{d}{\sqrt{e}} + \int_{t_0}^{t_0+t} \vert\beta(s)+ x_0\,\alpha(s)\vert e^{-\int_0^s \alpha(u)du}ds\right).
\end{align*}
Using then the same arguments as for the upper bound, we obtain
\begin{equation*}
\psi_-^L(t;\,t_0,x_{0}) \geqslant - \Delta_m d + x_0.
\end{equation*}
The condition $- \Delta_m d + x_0 \geqslant a_{\gamma,x_0}$ is equivalent to
\begin{equation}
d \leqslant \frac{1}{\Delta_m}(x_0 -a_{\gamma,x_0}).
\label{minl}
\end{equation}
Combining \eqref{magel}, \eqref{minl} and $d^2 \leqslant \rho(t_0+m) -\rho(t_0)$, we deduce the announced statement. 
\end{proof}

\subsection{WOMS algorithm for L-class diffusions}
Let us present now the random walk on spheroids which permits to approximate the L-class diffusion exit time.

\centerline{{\sc Algorithm}$_{m}$ (L-class WOMS)}

\noindent\fbox{\begin{minipage}{0.95\textwidth}
Parameter: $m>0$.

\textit{Initialization:} $Z_0 = x_0$ and $\mathcal{T}_\epsilon =0$

\textit{From step $n$ to step $n+1$:}

While $Z_n \leqslant b - \epsilon$ and $Z_n \geqslant a + \epsilon$ do
\begin{itemize}
\item[$\bullet$] simulate a Brownian exit time from the spheroid defined by $\psi_\pm$ with coefficient  $d=d(T_n,Z_n)$ defined in \eqref{paraml}.\\ We denote this random time $\tau_{n+1}$.

\item[$\bullet$] set $\tau^L_{n+1}=\rho^{-1}(\tau_{n+1}+\rho(\mathcal{T}_\epsilon))-\mathcal{T}_\epsilon$.

\item[$\bullet$] simulate a Bernoulli distributed r.v. $\mathcal{B}\sim\mathcal{B}(\frac{1}{2})$, if $\mathcal{B}=1$ then set

 $Z_{n+1}=\psi^L_-(\tau^L_{n+1};\,\mathcal{T}_\epsilon, Z_n)$ otherwise set $Z_{n+1}=\psi^L_+(\tau^L_{n+1};\,\mathcal{T}_\epsilon, Z_n)$.

\item[$\bullet$] $\mathcal{T}_\epsilon\leftarrow \mathcal{T}_\epsilon +\tau^L_{n+1}$.
\end{itemize}
\textit{Outcome:} $\mathcal{T}_\epsilon$ the approximated exit time from the interval $[a,b]$ for the diffusion $(X_t,\,t\ge 0)$.
\end{minipage}}

\vspace*{0.2cm}
As usual let us describe the efficiency of the algorithm. This algorithm is particularly efficient since its averaged  number of steps is of the order $|\log(\epsilon)|$ and since its outcome $\mathcal{T}_\epsilon$ converges towards the value of the exit time as $\epsilon$ tends to $0$. We present these two results in details in the following subsections. Even if the statement of these results look like similar to those presented in the Ornstein-Uhlenbeck context (see \cite{Herrmann-Massin-1}), the situations are clearly different since here the coefficients - and therefore the size of the spheroids - are time-dependent.\\
Since the L-class diffusions are non homogeneous, the sequence $(Z_n)_n$ of successive exit positions, appearing in the algorithm,  does not define a Markov chain. We need therefore to consider both the successive times and positions $(T_n,X_n)$ in order to deal with a Markov chain. Here $T_n$ stands for the cumulative time:
\begin{equation}
T_n=\sum_{k=1}^n\tau^L_k,\quad n\ge 1.
\label{defTn}
\end{equation}

\subsubsection{Average number of steps}

In order to describe precisely the average number of steps in {\sc Algorithm}$_{m}$, we introduce two crucial hypotheses.

\begin{assumption}
\label{assump1}
There exist $q'\in[0,1[$ and $q\in[0,1]$, $C_{\tilde{\sigma},\beta}>0$ and $\underline{\sigma}>0$ such that 

\begin{equation}
\vert\alpha(t)\vert =\mathcal{O}((\ln t)^{q'}) ,\ \quad \mbox{for large values of}\ t,
\end{equation}
and
\begin{equation}
\label{minsigma}
\underline{\sigma}\leqslant\tilde{\sigma}(t) \leqslant C_{\tilde{\sigma}, \beta}\ t^{q/4}, \quad \vert \beta(t)\vert \leqslant  C_{\tilde{\sigma}, \beta}\ t^{q/4}, \quad\mbox{for}\ t \ \mbox{large enough}.
\end{equation}
\end{assumption}
\begin{assumption}
\label{assump2}
There exists $\chi_m > 0$ such that, for any $t$ large enough,
\begin{equation}
\inf\limits_{s \in [t,t+m]} \tilde{\sigma}(s) \geqslant \chi_m \sup\limits_{s \in [t,t+m]} \tilde{\sigma}(s).
\end{equation}
\end{assumption}

\begin{thm}
\label{meanL}
Let us assume that Assumptions \ref{assump1} and \ref{assump2} are satisfied for a particular parameter $m>0$. Then for any parameter $\tilde{q}>q$, there exists a constant $C_{\tilde{q}}>0$ such that $N_\epsilon$, the number of steps observed in $\mbox{{\sc Algorithm}}_{m}$ has the following upper-bound:
\begin{equation*}
\mathbb{E}[N_{\epsilon}^{1-\tilde{q}}] \leqslant C_{\tilde{q}}|\log(\epsilon)|,
\end{equation*}
for any $\epsilon>0$ small enough.
\end{thm}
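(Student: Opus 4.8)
The plan is to analyse the Markov chain $(T_n,Z_n)$ generated by $\mathrm{Algorithm}_m$ — here $Z_n$ is the $n$‑th exit position and $T_n=\sum_{k\le n}\tau^L_k$ — through the distance to the boundary $\delta_n:=\min(Z_n-a,\,b-Z_n)$. Since the while–loop runs exactly while $\delta_n\ge\epsilon$, we have $N_\epsilon=\inf\{n\ge 0:\delta_n<\epsilon\}$; and since the third constraint in Proposition \ref{parameterL} forces each generalized spheroid to live in time inside $[T_n,T_n+m]$, we get $T_n\le mn$. First I would record the deterministic geometry: by Proposition \ref{parameterL} the $x$‑extent of the $n$‑th spheroid is at most $d\,\Delta_m\le(1-\gamma)\delta_n$, so $|Z_{n+1}-Z_n|\le(1-\gamma)\delta_n$ and hence $\gamma\,\delta_n\le\delta_{n+1}\le(2-\gamma)\,\delta_n$. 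Consequently the increments of $\ln(1/\delta_n)$ are bounded, and $\ln(1/\delta_{n\wedge N_\epsilon})\le|\log\epsilon|+|\log\gamma|$ for every $n$ (before stopping $\delta_n\ge\epsilon$, at stopping $\delta_{N_\epsilon}\ge\gamma\epsilon$).

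The core of the proof is a one–step drift estimate: for some $c>0$ and any $\tilde q>q$,
\[
\mathbb{E}\big[\ln(1/\delta_{n+1})-\ln(1/\delta_n)\,\big|\,\mathcal{F}_n\big]\ \ge\ \frac{c}{(1+T_n)^{\tilde q}}\qquad\text{on }\{N_\epsilon>n\}.
\]
To get it I would use Proposition \ref{spheroidL} and $\psi_-=-\psi_+$ to write the jump as $Z_{n+1}-Z_n=A_n+\varepsilon_n B_n$, where $\varepsilon_n=\pm1$ is the algorithm's symmetric Bernoulli sign (independent of everything else), $B_n=e^{-\theta(T_{n+1})}\psi_+(\tau_{n+1})\ge 0$ is the fluctuation of the generalized spheroid (using $\rho(T_{n+1})-\rho(T_n)=\tau_{n+1}$, the underlying Brownian exit time), and $A_n=c(T_{n+1})+(Z_n-c(T_n))e^{\int_{T_n}^{T_{n+1}}\alpha}-Z_n=e^{-\theta(T_{n+1})}\int_{T_n}^{T_{n+1}}(\beta+Z_n\alpha)\,e^{\theta}\,\dd s$ is its deterministic drift, with $|A_n|\le d\Delta_m\le(1-\gamma)\delta_n$. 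Writing $\delta_{n+1}=\delta_n\pm(A_n+\varepsilon_n B_n)$ according to the nearest boundary and conditioning on $\tau_{n+1}$ gives, via $-\ln(1-u)\ge u$,
\[
\mathbb{E}\big[\ln(1/\delta_{n+1})\,\big|\,\tau_{n+1},\mathcal{F}_n\big]\ \ge\ \ln(1/\delta_n)-\ln\!\Big(1+\tfrac{A_n'}{\delta_n}\Big)+\tfrac12\,\frac{B_n^2}{(2-\gamma)^2\delta_n^2},\qquad |A_n'|=|A_n|.
\]
Then two inputs are needed. The fluctuation is bounded below by the explicit Brownian spheroid law: $\tau_{n+1}\overset{d}{=}d^2Ue^{-N^2}$ gives $\mathbb{E}[\tau_{n+1}\log(d^2/\tau_{n+1})]=c_1d^2$ with $c_1>0$, and since the $e^{\pm\theta}$ factors in $d=(1-\gamma)\delta_n/\Delta_m$ and in $\Delta_m$ cancel while only the polynomial growth of $\tilde\sigma,\beta$ survives in the integral of \eqref{deltam} (by Assumptions \ref{assump1}–\ref{assump2}, with $|\alpha|=\mathcal O((\ln t)^{q'})$, $q'<1$, making the remaining exponential factors $(1+T_n)^{o(1)}$), one obtains $\mathbb{E}[B_n^2\mid\mathcal{F}_n]\ge c_2\,\delta_n^2(1+T_n)^{-\tilde q}$. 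The Cauchy–Schwarz estimate behind \eqref{deltam} controls $|A_n|\le e^{-\theta(T_{n+1})}\big(\int_{T_n}^{T_{n+1}}|\beta+Z_n\alpha|^2/\tilde\sigma^2\big)^{1/2}\sqrt{\tau_{n+1}}$. The hard part is that this drift can be of order $\delta_n$ and can point away from the nearest boundary (the mean–reverting case), so the bound above is not positive for large $T_n$; the fix is to run the argument with $\delta_n$ replaced by the distance to the boundary towards which $\beta(T_n)+Z_n\alpha(T_n)$ points (equivalently, to add to $\ln(1/\delta_n)$ a Lyapunov term recording that preferred boundary), so that the drift always contributes non‑negatively and $B_n$ supplies the $(1+T_n)^{-\tilde q}$ gain, together with a separate (easy) treatment of the steps where $\delta_n$ is not small relative to $T_n$ — there $\ln(1/\delta_n)\lesssim\tilde q\ln(1+T_n)$ is automatically bounded and the walk leaves that region in a controlled number of steps.

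Granting the one–step estimate, the conclusion is a routine supermartingale argument. Using $T_k\le mk$, the process $M_n:=\ln(1/\delta_{n\wedge N_\epsilon})-\ln(1/\delta_0)-\sum_{k=0}^{(n\wedge N_\epsilon)-1}c(1+mk)^{-\tilde q}$ is a submartingale with $M_0=0$; taking expectations, using $\ln(1/\delta_{n\wedge N_\epsilon})\le|\log\epsilon|+|\log\gamma|$ and $\sum_{k=0}^{N-1}(1+mk)^{-\tilde q}\ge\kappa\,N^{1-\tilde q}$ (valid for $0<\tilde q<1$), we get $\mathbb{E}\big[(n\wedge N_\epsilon)^{1-\tilde q}\big]\le(\kappa c)^{-1}\big(|\log\epsilon|+|\log\gamma|+\ln(1/\delta_0)\big)$, and letting $n\to\infty$ by monotone convergence yields $\mathbb{E}[N_\epsilon^{1-\tilde q}]\le C_{\tilde q}|\log\epsilon|$ for $\epsilon$ small enough. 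For the early times, where $T_n$ is too small for the asymptotic bounds of Assumption \ref{assump1} to be in force, continuity of $\alpha,\beta,\tilde\sigma$ on compact time intervals gives uniform two–sided bounds on $\Delta_m$ and on $\rho(\cdot+m)-\rho(\cdot)$, so the one–step estimate holds there too (with a possibly worse constant). The remaining effort is bookkeeping of the factors $e^{\pm\theta}$ and $e^{\int|\alpha|}$, all tamed to $(1+T_n)^{o(1)}$.
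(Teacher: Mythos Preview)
Your overall strategy—a supermartingale/potential argument with a logarithmic Lyapunov function and a step-dependent increment $\sim n^{-\tilde q}$—is exactly the right shape, and your deterministic geometry ($|Z_{n+1}-Z_n|\le(1-\gamma)\delta_n$, $T_n\le mn$) and the $B_n$-estimate via $\mathbb{E}[\tau\log(d^2/\tau)]=c_1d^2$ are fine. But the one-step drift inequality fails precisely at the place you flag as ``the hard part'', and your proposed fix does not close the gap. Concretely: from $|A_n|+B_n\le d\Delta_m\le(1-\gamma)\delta_n$ you only know $|A_n|/\delta_n\le 1-\gamma$, and when the diffusion's drift points from $Z_n$ toward the interior (e.g.\ $\alpha\equiv0$, $\tilde\sigma\equiv1$, $\beta(t)=t^{q/4}$ with $Z_n$ near $a$) the term $-\ln(1+A_n/\delta_n)$ is a \emph{negative constant}-order contribution while $\mathbb{E}[B_n^2]/\delta_n^2$ decays like $T_n^{-q/2}$; so $\mathbb{E}[\ln(1/\delta_{n+1})-\ln(1/\delta_n)\mid\mathcal{F}_n]$ is negative for large $T_n$, not $\ge c(1+T_n)^{-\tilde q}$. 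Replacing $\delta_n$ by the distance to the ``preferred'' boundary does not help: that distance is then of order $b-a$, the jump is still only of size $(1-\gamma)\delta_n\ll b-a$, and on the event of exit through the non-preferred boundary your Lyapunov value at $N_\epsilon$ is bounded, so the optional stopping step gives nothing. The ``separate easy treatment'' of the bulk region is likewise not easy: with a persistent interior drift the chain can spend an uncontrolled number of steps with $\delta_n$ bounded away from both $\epsilon$ and the center, during which $\ln(1/\delta_n)$ drifts the wrong way.

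The paper removes this obstruction by composing $V(x)=\log\big((x-a)(b-x)/(\gamma\epsilon(b-a-\gamma\epsilon))\big)$ not with $x$ but with $F(t,x)=\mathbb{E}_x[X_{t\wedge\tau_{ab}}]$, the solution of the backward equation $\partial_t F+(\alpha x+\beta)\partial_x F+\tfrac12\tilde\sigma^2\partial_{xx}F=0$ with $F(t,a)=a$, $F(t,b)=b$. Then $F(t,X_t)$ is a local martingale up to $\tau_{ab}$, so It\^o's formula gives for $H=V\circ F$
\[
\mathbb{E}\!\left[H(T_{n+1},X_{T_{n+1}})-H(T_n,X_{T_n})\,\middle|\,\mathcal{F}_{T_n}\right]
=\tfrac12\,\mathbb{E}\!\left[\int_{T_n}^{T_{n+1}}\tilde\sigma^2\,V''(F)\,(\partial_x F)^2\,ds\,\middle|\,\mathcal{F}_{T_n}\right],
\]
with \emph{no} first-order drift term left over—this is exactly what your $A_n$ was. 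The sign is now automatic since $V''<0$; the quantitative lower bound comes from two PDE facts the paper proves separately: $\partial_x F\ge\kappa>0$ uniformly, and $F(t,x)-a\le\kappa_a(x-a)$, $b-F(t,x)\le\kappa_b(b-x)$, which convert $|V''(F)|$ into $\gtrsim (X_s-a)^{-2}+(b-X_s)^{-2}$. Combined with the spheroid inclusion $\psi_+^L-a\le 2(X_{T_n}-a)$ and the time-change bound on $T_{n+1}-T_n$, this yields $\mathcal{D}_n\le -c\,\dfrac{\min(1,\kappa_\pm)^2}{r_n\Delta_m^2}$, and Assumptions~\ref{assump1}--\ref{assump2} give $\dfrac{\min(1,\kappa_\pm)^2}{r_n\Delta_m^2}\gtrsim n^{-\tilde q}$. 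Your final step (summing $n^{-\tilde q}$ and optional stopping) is then exactly how the paper concludes. In short, the missing idea is the harmonic change of variable $x\mapsto F(t,x)$ that annihilates the drift; once you have it, the rest of your outline goes through.
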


In particular, for a L-class diffusion with bounded coefficients, we  can prove that
\(\mathbb{E}[N_{\epsilon}] \leqslant C_{0}|\log(\epsilon)|,\)
for $\epsilon$ small enough.\\

Let us notice that {\sc Algorithm}$_m$ can be modified in order to approximate the stopping time $\mathcal{T}\wedge T_{\rm max}$ where $T_{\rm max}$ is a fixed time horizon. It suffices in such a situation to observe the path skeleton $(T_n,X_n)_{n\ge 0}$ up to the exit from the domain $[0,T_{\rm max}]\times[a+\epsilon, b-\epsilon]$. The proof of Theorem \ref{meanL} can be adapted to this modified algorithm: there exists a constant $C>0$ such that the average number of spheroids satisfies
\[
\mathbb{E}[N_{\epsilon}] \leqslant C|\log(\epsilon)|,
\] 
for any $\epsilon>0$ small enough. Since this result only concerns the diffusion process on the restricted time interval $[0,T_{\rm max}]$, we don't need any particular assumption on the large time behaviour of the coefficients $\alpha$, $\beta$ and $\tilde{\sigma}$. Assumption \ref{assump1} and \ref{assump2} are therefore not necessary for the modified algorithm.\\

We postpone the proof of Theorem \ref{meanL} and present several preliminary results. First we shall focus our attention on a comparison result between the L-class diffusion and a particular autonomous diffusion. Secondly we describe particular solutions of PDEs related to the diffusion generator. Finally we prove Theorem \ref{meanL} using the martingale theory.

\subsubsection*{A comparison result for SDEs}

We introduce two different results: the first one permits to skip the diffusion coefficient in \eqref{edsl} and the second one permits to replace the time-dependent  drift term by a constant drift.

\begin{proposition}
Let $(X_t,\ t\ge 0)$ the solution of the SDE \eqref{edsl}. We define the strictly increasing function $\gamma$ by
\[
\int_0^{\gamma(t)}\tilde{\sigma}^2(s)ds=t,\quad t\ge 0.
\]
Then  $Y_t:=X_{\gamma(t)}$ satisfies the following SDE

\begin{equation}
\label{eq:eds}
dY_t=\Big( \frac{\alpha(\gamma(t))}{\tilde{\sigma}^2(\gamma(t))}\, Y_t+  \frac{\beta(\gamma(t))}{\tilde{\sigma}^2(\gamma(t))}\Big)\,dt+dB_t,\quad t\ge 0.
\end{equation}
where $(B_t)_{t \ge 0}$ is a one-dimensional Brownian motion.
\end{proposition}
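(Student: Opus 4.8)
The plan is to perform a deterministic time-change on the SDE \eqref{edsl} and identify the resulting equation. First I would define $\gamma$ as in the statement: since $\tilde\sigma$ is continuous and, implicitly here, strictly positive (so that $\rho$ and hence the whole construction make sense), the map $t\mapsto \int_0^t \tilde\sigma^2(s)\,ds$ is a continuous strictly increasing bijection of $\mathbb{R}_+$ onto its range, so its inverse $\gamma$ is well-defined, $C^1$, strictly increasing, with $\gamma(0)=0$ and $\gamma'(t)=1/\tilde\sigma^2(\gamma(t))$ by differentiating the defining relation $\int_0^{\gamma(t)}\tilde\sigma^2(s)\,ds=t$.

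Next I would set $Y_t:=X_{\gamma(t)}$ and apply the standard time-change formula for stochastic integrals (Proposition V.1.5 or the time-change of martingales in \cite{rev}). Writing \eqref{edsl} in integral form, $X_{\gamma(t)}=X_0+\int_0^{\gamma(t)}(\alpha(s)X_s+\beta(s))\,ds+\int_0^{\gamma(t)}\tilde\sigma(s)\,dW_s$. For the drift term, the substitution $s=\gamma(u)$ with $ds=\gamma'(u)\,du=\tilde\sigma^{-2}(\gamma(u))\,du$ turns $\int_0^{\gamma(t)}(\alpha(s)X_s+\beta(s))\,ds$ into $\int_0^t\big(\tfrac{\alpha(\gamma(u))}{\tilde\sigma^2(\gamma(u))}Y_u+\tfrac{\beta(\gamma(u))}{\tilde\sigma^2(\gamma(u))}\big)\,du$, which is exactly the drift announced in \eqref{eq:eds}. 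For the martingale part, $N_t:=\int_0^{\gamma(t)}\tilde\sigma(s)\,dW_s$ is a continuous local martingale (with respect to the time-changed filtration $\mathcal{F}_{\gamma(t)}$) whose bracket is $\langle N\rangle_t=\int_0^{\gamma(t)}\tilde\sigma^2(s)\,ds=t$ by the very definition of $\gamma$. By Lévy's characterization, $B_t:=N_t$ is a standard one-dimensional Brownian motion, and $dY_t$ has the stated form.

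The only genuinely delicate point is the treatment of the martingale term under the time-change: one must justify that $\int_0^{\gamma(t)}\tilde\sigma(s)\,dW_s$ is again a stochastic integral against a Brownian motion in the new clock, i.e. invoke the time-substitution theorem for continuous local martingales rather than naively changing variables. Once the bracket computation $\langle N\rangle_t=t$ is in hand, Lévy's theorem closes the argument immediately; everything else (well-definedness of $\gamma$, the ordinary change of variables in the Lebesgue integral for the drift) is routine. I would also note that, strictly speaking, this only produces a solution of \eqref{eq:eds} in the (weak) sense needed later — which is all that is required, since the subsequent comparison arguments only use the law of $Y$ — and that uniqueness for \eqref{eq:eds} follows from its linear structure, so no ambiguity arises.
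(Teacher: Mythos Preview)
Your proof is correct and follows essentially the same route as the paper: write $X_{\gamma(t)}$ in integral form, change variables in the drift using $\gamma'(t)=\tilde\sigma^{-2}(\gamma(t))$, and set $B_t:=\int_0^{\gamma(t)}\tilde\sigma(s)\,dW_s$. If anything, you are more explicit than the paper, which simply asserts that $B_t$ is a standard Brownian motion without spelling out the quadratic-variation computation and the appeal to L\'evy's characterization that you rightly identify as the only nontrivial step.
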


\begin{proof}

Using the definition of $Y_t$, we get
\begin{align*}
Y_t&=X_{\gamma(t)}=x+\int_0^{\gamma(t)}\Big( \alpha(s)X_s+\beta(s) \Big)\,ds+\int_0^{\gamma(t)}\tilde{\sigma}(s)dW_s\\
&=x+\int_0^t\Big( \alpha(\gamma(s))X_{\gamma(s)}+\beta(\gamma(s)) \Big)\gamma'(s)\,ds+B_{t}\\
&=x+\int_0^t\Big( \alpha(\gamma(s))Y_s+\beta(\gamma(s)) \Big)\gamma'(s)\,ds+B_{t}
\end{align*}
where $B_t=\int_0^{\gamma(t)}\tilde{\sigma}(s)dW_s$ is a standard Brownian motion.
\end{proof}
We obtain the following comparison result, its proof can be found in \cite{ikeda-wanatabe} (Chapter VI).
\begin{proposition}
\label{comparison}
Let $T>0$ and let us define 

\[
\mu_T:=\inf_{x\in[a,b],\,t\le \gamma^{-1}(T)}\Big\{ \frac{\alpha(\gamma(t))}{\tilde{\sigma}^2(\gamma(t))}\, x+  \frac{\beta(\gamma(t))}{\tilde{\sigma}^2(\gamma(t))}\Big\}.
\]
Let $(Z^T_t)_{t \ge 0}$ the Brownian motion with drift satisfying
\begin{equation}
Z_t^T=x +\mu_T\,t+B_t,\quad t\ge 0.
\label{zTt}
\end{equation}
Then $(Y_t)$ the solution of \eqref{eq:eds} with initial condition $x$ satisfies
\begin{equation*}
(Z_t^T\le Y_t\quad\mbox{a.s.},\quad \forall t\le \gamma^{-1}(T))\quad\mbox{and}\quad (Z_{\gamma(t)}^T\le X_t\quad\mbox{a.s.}\quad\forall t\le T). 
\end{equation*}
\end{proposition}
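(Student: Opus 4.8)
The plan is to recognise the statement as an instance of the classical one–dimensional comparison theorem for stochastic differential equations (see \cite{ikeda-wanatabe}, Chapter~VI) and to spell out the short form it takes here. The two equations to be compared, \eqref{eq:eds} for $Y$ and $Z^T_t=x+\mu_T t+B_t$, are driven by the \emph{same} Brownian motion $B$ (the one appearing in \eqref{eq:eds}), they both start from $x$, and they share \emph{the same} diffusion coefficient, namely the constant $1$. Moreover, by the very definition of $\mu_T$ as an infimum, the constant drift $\mu_T$ of $Z^T$ is a lower bound for the drift coefficient
\[
\beta_Y(t,y):=\frac{\alpha(\gamma(t))}{\tilde{\sigma}^2(\gamma(t))}\,y+\frac{\beta(\gamma(t))}{\tilde{\sigma}^2(\gamma(t))}
\]
of \eqref{eq:eds}, as soon as $y\in[a,b]$ and $t\le\gamma^{-1}(T)$. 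Intuitively $Y$ is then pushed upward at least as strongly as $Z^T$, which is exactly what yields $Z^T_t\le Y_t$.

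First I would exploit that the two diffusion coefficients coincide, so that no Yamada–Watanabe machinery is needed: setting $D_t:=Y_t-Z^T_t$, the Brownian increments cancel and $D$ carries no martingale part,
\[
D_t=\int_0^t\bigl(\beta_Y(s,Y_s)-\mu_T\bigr)\,ds,\qquad D_0=0 .
\]
Writing $\tau:=\inf\{t\ge 0:\ Y_t\notin[a,b]\}$, for every $s\le\tau\wedge\gamma^{-1}(T)$ one has $Y_s\in[a,b]$ and $s\le\gamma^{-1}(T)$, hence $\beta_Y(s,Y_s)\ge\mu_T$; therefore $t\mapsto D_t$ is nondecreasing on $[0,\tau\wedge\gamma^{-1}(T)]$ and, since $D_0=0$, we get $D_t\ge0$ there, i.e. $Z^T_t\le Y_t$ for $t\le\tau\wedge\gamma^{-1}(T)$, pathwise. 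For completeness I would recall that for a genuinely space–dependent, merely $\tfrac12$–Hölder diffusion coefficient one replaces this step by the usual argument — apply Itô to $\phi_n(D_t)$ for the Yamada–Watanabe approximations $\phi_n$ of $x\mapsto x^+$, take expectations so the local martingale term drops, bound the second–order contribution by $\tfrac{Ct}{n}\to0$ using $|\sigma(x)-\sigma(y)|^2\le C|x-y|$ and $\phi_n''(z)\le\tfrac{2}{nz}$ on its support, and close with Gronwall to obtain $\mathbb{E}[D_t^+]=0$ — but here this is superfluous. The second inequality of the statement then follows from the first by the time substitution $t\mapsto\gamma^{-1}(t)$, using $X_t=Y_{\gamma^{-1}(t)}$ and $\gamma^{-1}(t)\le\gamma^{-1}(T)$ for $t\le T$.

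The delicate point — and the one I expect to be the main obstacle — is the drift ordering itself: $\beta_Y(t,y)\ge\mu_T$ is only granted for $y\in[a,b]$, not for all real $y$, so the comparison is legitimate exactly up to the first exit $\tau$ of $Y$ from $[a,b]$. This is precisely the range used throughout the paper, since the path is frozen at its first exit from $[a,b]$ in {\sc Algorithm}$_m$; if one insisted on a version valid for all $t\le\gamma^{-1}(T)$, one would replace $\beta_Y(t,\cdot)$ outside $[a,b]$ by $\beta_Y(t,\cdot)\vee\mu_T$, which leaves $\beta_Y$ unchanged on $[a,b]$, preserves uniform Lipschitz continuity on $[0,\gamma^{-1}(T)]$ (recall $\alpha$ and $\tilde{\sigma}$ are continuous with $\tilde{\sigma}>0$, so that $\gamma$ is a genuine time change), and does not alter $Y$ before $\tau$. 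I expect this localisation/extension bookkeeping, rather than any analytic estimate, to be where care is required.
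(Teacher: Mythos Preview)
Your proposal is correct, and in fact goes further than the paper itself: the paper gives no argument at all beyond the one-line citation ``its proof can be found in \cite{ikeda-wanatabe} (Chapter~VI)''. Your direct computation of $D_t=Y_t-Z^T_t$ --- observing that the Brownian parts cancel because both diffusion coefficients equal $1$, so $D$ is the time integral of the nonnegative quantity $\beta_Y(s,Y_s)-\mu_T$ --- is a clean and fully self-contained substitute for invoking the general comparison theorem, and the time-change step to pass from $Y$ to $X$ is exactly right.

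You are also correct to flag the localisation issue: the infimum defining $\mu_T$ is taken only over $x\in[a,b]$, so the drift ordering $\beta_Y(t,y)\ge\mu_T$ is guaranteed only while $Y$ stays in $[a,b]$, and the stated inequality for \emph{all} $t\le\gamma^{-1}(T)$ is not literally what Ikeda--Watanabe delivers without your extension trick $\beta_Y(t,\cdot)\vee\mu_T$. The paper glosses over this. As you note, it is harmless in context --- every use of the proposition in the paper (Propositions~\ref{continuity}, \ref{minderiv}, \ref{distF}) concerns the stopped process $X_{t\wedge\tau_{ab}}$ --- but your bookkeeping is the honest version of the argument.
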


\begin{rem}
\label{inversecomparison}
Choosing rather the particular value
\[
\mu_T:=\sup_{x\in[a,b],\,t\le \gamma^{-1}(T)}\Big\{ \frac{\alpha(\gamma(t))}{\tilde{\sigma}^2(\gamma(t))}\, x+  \frac{\beta(\gamma(t))}{\tilde{\sigma}^2(\gamma(t))}\Big\},
\]
leads to ($Z_t^T\ge Y_t$ a.s. for all $t\leqslant \gamma^{-1}(T)$).
\end{rem}

\subsubsection*{An Initial-Boundary Value problem}

We consider a value problem which is directly linked to the L-class diffusions: let $F:(\mathbb{R}_+,[a,b])\to \mathbb{R}$ be the solution of
\begin{equation}
\frac{\partial F}{\partial t } + (\alpha(t)x + \beta(t))\frac{\partial F}{\partial x} + \frac{1}{2}\tilde{\sigma}(t)^2\frac{\partial^2 F}{\partial x^2} = 0
\label{edp}
\end{equation}
 with initial and boundary conditions  $F(0,x) = x$, $F(t,a) = a$, $F(t,b) = b$.

It is well-known (see, for instance, \cite{bass}, Chap.II) that $F$ admits a probabilistic representation. Indeed 

\begin{equation}
F(t,x) = \mathbb{E}_x[X_{t\wedge \tau_{ab}}],\quad \forall t\ge 0,\quad \forall x\in [a,b],
\label{defF}
\end{equation}
where $(X_t,\,t\ge 0)$ satisfies \eqref{edsl}.
We list some useful properties of the function $F$.

\begin{proposition}
The function $x\mapsto F(t,x)$ defined in \eqref{defF} is increasing on the set $[a,b]$.
\end{proposition}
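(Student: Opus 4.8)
The plan is to prove monotonicity of $x\mapsto F(t,x)$ by exploiting the probabilistic representation \eqref{defF} together with a pathwise comparison of solutions of \eqref{edsl} started at different points. Fix $t\ge 0$ and two initial positions $x_1<x_2$ in $[a,b]$. Let $(X_s^{x_1})_{s\ge 0}$ and $(X_s^{x_2})_{s\ge 0}$ be the solutions of \eqref{edsl} driven by the \emph{same} Brownian motion $(W_s)_{s\ge 0}$, with $X_0^{x_i}=x_i$. I would then argue that $X_s^{x_1}\le X_s^{x_2}$ almost surely for every $s\ge 0$, and transfer this inequality to the stopped-and-truncated processes entering \eqref{defF}.

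The pathwise comparison is the heart of the argument. The cleanest route is to observe that the difference $D_s:=X_s^{x_2}-X_s^{x_1}$ satisfies a \emph{linear homogeneous} SDE: subtracting the two copies of \eqref{edsl} the Brownian increments cancel (same $\tilde\sigma(s)$, same $W$), and the drift difference is $\alpha(s)(X_s^{x_2}-X_s^{x_1})=\alpha(s)D_s$, so
\begin{equation*}
dD_s=\alpha(s)D_s\,ds,\qquad D_0=x_2-x_1>0.
\end{equation*}
This ODE-in-$\omega$ integrates explicitly to $D_s=(x_2-x_1)\,e^{-\theta(s)+\theta(0)}=(x_2-x_1)e^{-\theta(s)}$ with $\theta$ as in \eqref{theta} (recall $\theta(0)=0$), which is strictly positive for all $s$. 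Hence $X_s^{x_1}<X_s^{x_2}$ for all $s\ge 0$, almost surely; in particular the ordering is preserved forever and there is no crossing to worry about. (Equivalently one may invoke the standard comparison theorem for one-dimensional SDEs, e.g. the one cited from \cite{ikeda-wanatabe}, but here the explicit solution makes the comparison elementary.)

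Next I would handle the truncation at the boundary. Let $\tau_{ab}^{x_i}=\inf\{s\ge 0:\ X_s^{x_i}\notin[a,b]\}$. Because $x\mapsto X_s^{x}$ is (strictly) increasing and continuous and the processes start inside $[a,b]$, a short argument shows $X_{s\wedge\tau_{ab}^{x_1}}^{x_1}\le X_{s\wedge\tau_{ab}^{x_2}}^{x_2}$ pathwise for every $s$: on $\{s\le\tau_{ab}^{x_1}\wedge\tau_{ab}^{x_2}\}$ it is just $D_s\ge 0$; if $\tau_{ab}^{x_1}$ occurs first it must be through the lower boundary $a$ (since $X^{x_1}<X^{x_2}$ prevents $X^{x_1}$ from reaching $b$ strictly before $X^{x_2}$ does), so the left-hand side equals $a\le X_{s\wedge\tau_{ab}^{x_2}}^{x_2}$; symmetrically if $\tau_{ab}^{x_2}$ occurs first it is through $b$, so the right-hand side equals $b$ and dominates. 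Taking expectations and using \eqref{defF} yields $F(t,x_1)\le F(t,x_2)$, proving that $x\mapsto F(t,x)$ is increasing on $[a,b]$. The only mildly delicate point — and the step I would be most careful about — is this boundary bookkeeping showing the ordering survives the stopping; everything else is immediate once the pathwise comparison is in hand.
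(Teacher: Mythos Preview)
Your proof is correct and follows essentially the same approach as the paper's: couple the two solutions with the same Brownian motion, establish the pathwise ordering $X^{x_1}_s\le X^{x_2}_s$, and then check case by case that the ordering survives the stopping at $t\wedge\tau_{ab}$. Your explicit computation of the difference $D_s=(x_2-x_1)e^{-\theta(s)}$ exploits the linear structure more directly than the paper's appeal to generic coupling properties, which makes the strict inequality (and hence the boundary bookkeeping) particularly clean, but the overall strategy is the same.
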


\begin{proof}
It suffices to compare two paths $X$ and $X'$, having different starting points $x$ and $x'$ with $x\geqslant x'$ and satisfying the same SDE. By coupling properties, we obtain that for all $s\geqslant 0$, $X_s \geqslant X'_s$ and if there exists $s_0$ such that $X_{s_0} = X'_{s_0}$ then $X_{s} = X'_{s}$ for all $s\geqslant s_0$. Several cases can occur concerning the values of $X_{t\wedge \tau_{ab}}$ and $X'_{t\wedge \tau_{ab}}$. Either both exit times occur after the fixed time $t$, either both exit times occur before $t$, either only one of them occurs before $t$. Different situations are illustrated in Figure \ref{figure}. Observing carefully all possible scenarios, it is straightforward to observe $X_{t\wedge \tau_{ab}}\ge X'_{t\wedge \tau_{ab}}$ in any case.
\begin{figure}[H]
\begin{minipage}{0.49\textwidth}
\begin{figure}[H]
\includegraphics[width=\textwidth]{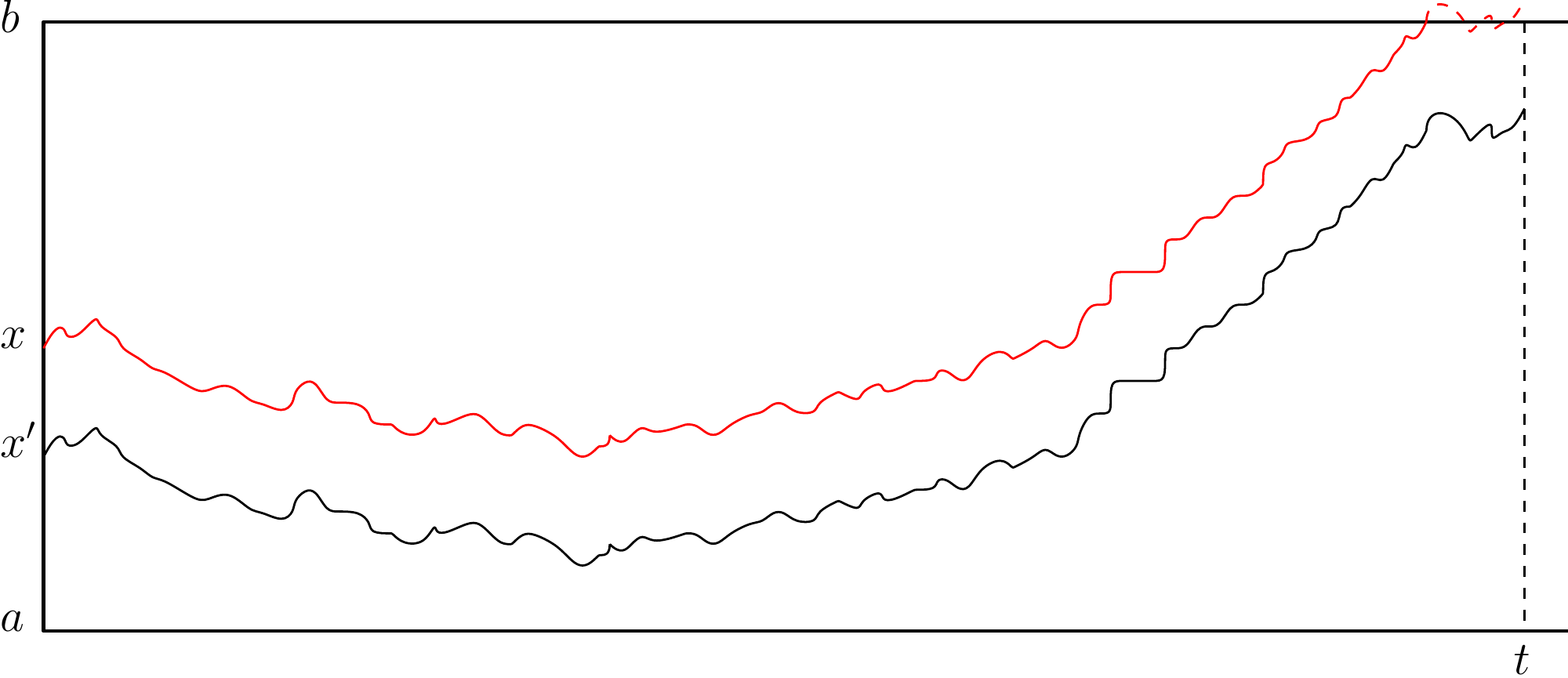}
\end{figure}
\end{minipage}
\begin{minipage}{0.49\textwidth}
\begin{figure}[H]
\includegraphics[width=\textwidth]{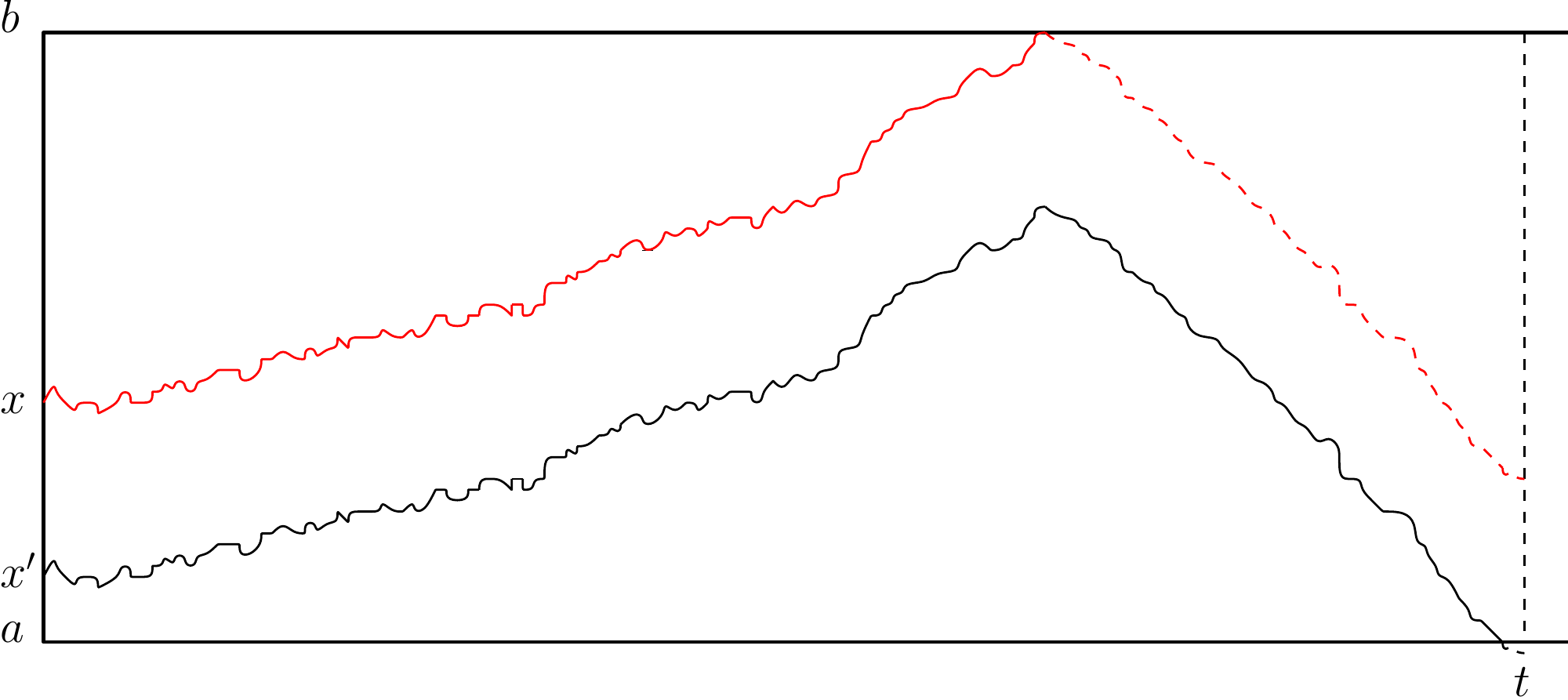}
\end{figure}
\end{minipage}

\begin{minipage}{0.49\textwidth}
\begin{figure}[H]
\includegraphics[width=\textwidth]{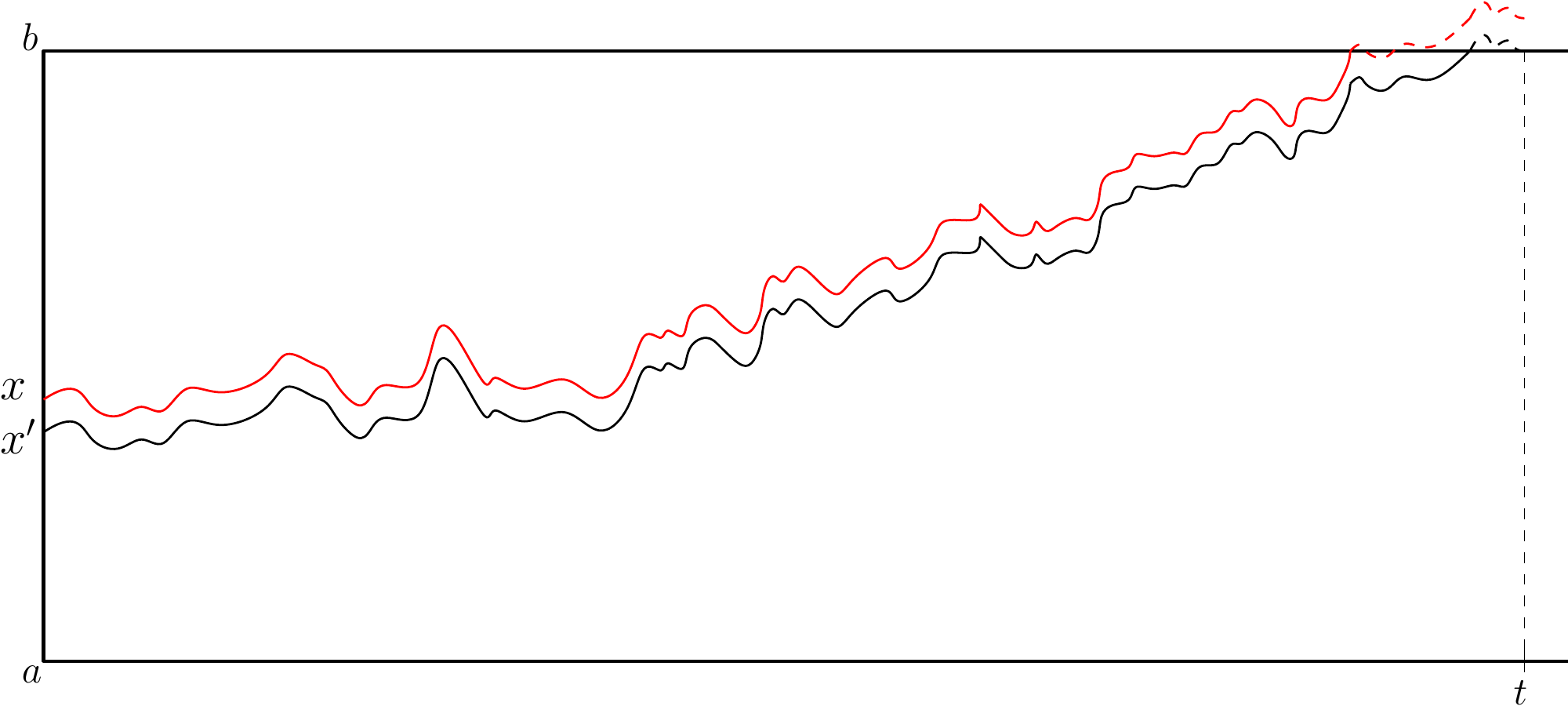}
\end{figure}
\end{minipage}
\begin{minipage}{0.49\textwidth}
\begin{figure}[H]
\includegraphics[width=\textwidth]{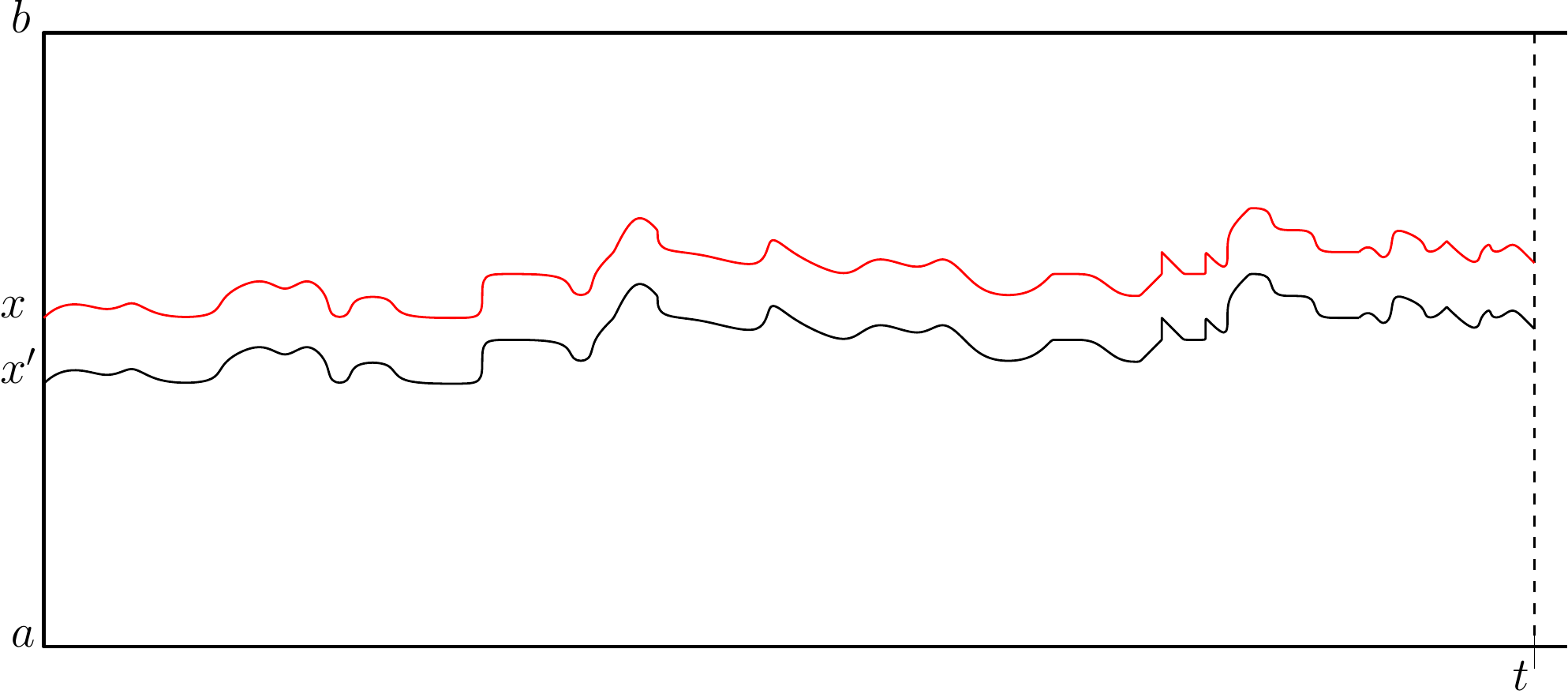}
\end{figure}
\end{minipage}
\caption{\small Possible scenarios occuring when observing the two different paths driven by the same noise.}
\label{figure}
\end{figure}
\end{proof}

\begin{proposition}
\label{continuity}
The function $x\mapsto F(t,x)$ defined in \eqref{defF} is continuous on the interval $[a,b]$. 
\end{proposition}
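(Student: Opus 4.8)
The plan is to leverage the monotonicity already proved: since $x\mapsto F(t,x)$ is non-decreasing and takes values in $[a,b]$, it is continuous at $x$ as soon as $F(t,x_n)\to F(t,x)$ along some sequences $x_n\downarrow x$ and $x_n\uparrow x$. I would establish this convergence via the same pathwise coupling used for the monotonicity statement: let $X$ (started at $x$) and $X^{(n)}$ (started at $x_n$) be driven by the \emph{same} Brownian motion. Since \eqref{edsl} is linear, the difference solves a deterministic ODE, which gives the explicit identity $X^{(n)}_s-X_s=(x_n-x)e^{-\theta(s)}$ for all $s\ge0$; in particular $\sup_{s\le t}|X^{(n)}_s-X_s|\le|x_n-x|\sup_{s\le t}e^{-\theta(s)}\to0$, so $X^{(n)}\to X$ uniformly on $[0,t]$, surely and with an explicit rate.

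The core step is then to show $X^{(n)}_{t\wedge\tau_n}\to X_{t\wedge\tau}$ a.s., where $\tau_n,\tau$ denote the exit times of $X^{(n)},X$ from $[a,b]$; equivalently it suffices to prove $t\wedge\tau_n\to t\wedge\tau$ a.s., the stopped values then converging by uniform convergence plus continuity of the limiting path. Here I would use that $\tilde\sigma>0$ (which is forced by the requirement $\rho'>0$ in \eqref{defrho}), so the diffusion is non-degenerate and both endpoints $a,b$ are regular: almost surely $X_s\in(a,b)$ for every $s<\tau$, and if $\tau<\infty$ the path strictly overshoots the hit endpoint immediately after $\tau$. On $\{\tau>t\}$ the path $X$ remains in a compact subinterval of $(a,b)$ over $[0,t]$, hence any path uniformly close enough to it also stays in $[a,b]$ on $[0,t]$, giving $t\wedge\tau_n=t=t\wedge\tau$ for $n$ large. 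The case $\{\tau=t\}$ is negligible, since $X_t$ is Gaussian with variance $e^{-2\theta(t)}\rho(t)>0$ and thus $\mathbb P(X_t\in\{a,b\})=0$. On $\{\tau<t\}$, say $X_\tau=a$: regularity provides, a.s., for each $\eta\in(0,t-\tau)$ a time $s^\ast\in(\tau,\tau+\eta)$ and a random $m>0$ with $X_{s^\ast}<a-m$, while on the compact set $[0,\tau-\eta]\subset[0,\tau)$ the path $X$ stays at distance at least some random $\delta>0$ from $\{a,b\}$; therefore $\sup_{s\le t}|X^{(n)}_s-X_s|<\min(m,\delta)$ forces $\tau-\eta<\tau_n\le\tau+\eta$, and letting $\eta\downarrow0$ yields $t\wedge\tau_n\to t\wedge\tau$.

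It remains to interchange limit and expectation. From $X^{(n)}_s=X_s+(x_n-x)e^{-\theta(s)}$ we get $|X^{(n)}_{t\wedge\tau_n}|\le\sup_{s\le t}|X^{(n)}_s|\le\sup_{s\le t}|X_s|+C$ for all $n$ with $|x_n-x|\le1$, and $\sup_{s\le t}|X_s|$ is integrable because $(X_s)_{s\le t}$ is a Gaussian process; dominated convergence then gives $F(t,x_n)=\mathbb E[X^{(n)}_{t\wedge\tau_n}]\to\mathbb E[X_{t\wedge\tau}]=F(t,x)$. I expect the main obstacle to be precisely the a.s.\ convergence $t\wedge\tau_n\to t\wedge\tau$: it is false for arbitrary uniformly convergent continuous paths (a limit path may merely touch a boundary level), and is rescued here only by the regularity of the endpoints, i.e.\ by the fact that the non-degenerate diffusion actually crosses $a$ (resp.\ $b$) at its exit time; the remaining estimates are routine consequences of the explicit linear structure and the Gaussianity of $X$.
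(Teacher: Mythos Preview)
Your argument is correct and follows a genuinely different route from the paper's. You proceed \emph{softly}: the explicit linear coupling $X^{(n)}_s-X_s=(x_n-x)e^{-\theta(s)}$ gives uniform pathwise convergence on $[0,t]$, boundary regularity of the non-degenerate one-dimensional diffusion forces $t\wedge\tau_n\to t\wedge\tau$ a.s.\ (the key point being that the limit path actually \emph{crosses} the level it hits, so nearby paths must exit nearby), and dominated convergence finishes. The paper instead works \emph{quantitatively}: it decomposes $F(T,x+h)-F(T,x)$ into a term bounded by $h\,e^{\int_0^T|\alpha|}$ plus two ``overshoot'' terms of the type $\mathbb{E}\big[(\tilde X^{x+h}_{T\wedge\tilde\tau}-\tilde X^{x+h}_{T\wedge\tau})1_{\{\tilde\tau\ge\tau\}}\big]$, bounds the latter via the comparison with a drifted Brownian motion (Proposition~\ref{comparison}) together with an explicit scale-function computation, and chooses the splitting parameter $\delta=\sqrt{h}$ to obtain an $O(\sqrt h)$ modulus. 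Your approach is more elementary---it avoids the comparison result and the scale-function calculus altogether---while the paper's yields an explicit rate (which, however, is not used later in the paper). One small simplification on your side: the dominated-convergence step is easier than you make it, since by definition $X^{(n)}_{t\wedge\tau_n}\in[a,b]$, so the constant $\max(|a|,|b|)$ already dominates and no Gaussian supremum bound is needed.
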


\begin{proof}
We consider two strong solutions $(X_t^x)_{t \ge 0}$ and $(\tilde{X}_t^{x+h})_{t \ge 0}$ satisfying \eqref{edsl} with different starting point spaced by $h>0$. The exit time of the diffusion $X^x$ (respectively $\tilde{X}^{x+h}$) should be denoted by $\tau_{ab}$ (resp. $\tilde{\tau}_{ab}$) but for notational simplicity, we skip the index $ab$. Let $T$ be a fixed time, using the definition of $F$, we have 
\begin{align*}
0 &\leqslant F(T,x+h) - F(T,x)\\
 &= \mathbb{E}[\tilde{X}^{x+h}_{T \wedge \tilde{\tau}} -\tilde{X}^{x+h}_{T \wedge \tau \wedge \tilde{\tau}} + \tilde{X}^{x+h}_{T \wedge \tau \wedge \tilde{\tau}} -X^{x}_{T \wedge \tau} + X^{x}_{T \wedge \tau \wedge \tilde{\tau}} -X^{x}_{T \wedge \tau \wedge \tilde{\tau}}]\\
&= \mathbb{E}[ \chi_{T \wedge \tau \wedge \tilde{\tau}}] +\mathbb{E}[(\tilde{X}^{x+h}_{T \wedge \tilde{\tau}} -\tilde{X}^{x+h}_{T \wedge \tau})1_{\{\tilde{\tau} \geqslant \tau\}} -(X^{x}_{T \wedge \tau} -X^{x}_{T \wedge \tilde{\tau}})1_{\{\tilde{\tau} \leqslant \tau\}}],
\end{align*}
where  $\chi_{T \wedge \tau \wedge \tilde{\tau}} := \tilde{X}^{x+h}_{T \wedge \tau \wedge \tilde{\tau}} - X^x_{T \wedge \tau \wedge \tilde{\tau}} = he^{\int_0^{T \wedge \tau \wedge \tilde{\tau}} \alpha(u)du} \leqslant he^{\int_0^{T} \alpha(u)du}$ for all $T\ge 0$.
Let $\delta > 0$.
We can split each term as follows
\begin{align*}
\mathbb{E}[(\tilde{X}^{x+h}_{T \wedge \tilde{\tau}} -\tilde{X}^{x+h}_{T \wedge \tau})1_{\{\tilde{\tau} \geqslant \tau\}}] &\leqslant  \mathbb{E}[(\tilde{X}^{x+h}_{T \wedge \tilde{\tau}} -\tilde{X}^{x+h}_{T \wedge \tau})1_{\{\tilde{\tau} \geqslant \tau,\  \tilde{X}^{x+h}_{T \wedge \tilde{\tau}} -\tilde{X}^{x+h}_{T \wedge\tau} > \delta\}}]\\
&+ \delta\mathbb{P}(\tilde{\tau} \geqslant \tau, 0 \leqslant\tilde{X}^{x+h}_{T \wedge \tilde{\tau}} -\tilde{X}^{x+h}_{T \wedge\tau} \leqslant \delta)\\
&\leqslant (b-a)\mathbb{P}(\tilde{\tau} \geqslant \tau, \tilde{X}^{x+h}_{T \wedge \tilde{\tau}} -\tilde{X}^{x+h}_{T \wedge\tau} > \delta)+ \delta\\
&\leqslant (b-a)\mathbb{P}(\tilde{\tau} \geqslant \tau,\ T\geqslant\tau, \tilde{X}^{x+h}_{T \wedge \tilde{\tau}} -\tilde{X}^{x+h}_{\tau} > \delta)+ \delta.
\end{align*}
Similarly we obtain for the second term:
\begin{align*}
\mathbb{E}[(X^{x}_{T \wedge\tilde{\tau}} - X^{x}_{T \wedge \tau})1_{\tilde{\tau} \leqslant \tau}] 
&\leqslant (b-a)\mathbb{P}(\tilde{\tau} \leqslant \tau, T\geqslant \tilde{\tau}, X^{x}_{ \tilde{\tau}} -X^{x}_{t \wedge\tau} > \delta)+ \delta.
\end{align*}
Both probabilities appearing in the previous upper-bound can be treated in a similar way. We develop the arguments just for one of them:  $\mathbb{P}(\tilde{\tau} \geqslant \tau, t\geqslant \tau, \tilde{X}^{x+h}_{t \wedge \tilde{\tau}} -\tilde{X}^{x+h}_{\tau} > \delta)$. Let us introduce the shift process $\xi_t=\tilde{X}_{\tau+t}^{x+h}$. If $\tau$ is known (let us say that it is equal to $\phi$) then, due to the Markov property of the diffusion, $(\xi_t)_{t \geqslant 0}$ satisfies the following SDE:
\begin{equation}
d\xi_t=(\alpha(t+\phi)\xi_t+\beta(t+\phi))\,dt+\tilde{\sigma}(t+\phi)dB_{t},
\label{ksieds}
\end{equation}
where $(B_t)_{t \ge 0}$ is a standard Brownian motion and $\xi_0=\tilde{X}_{\tau}^{x+h}$. Since $\tilde{X}^{x+h}$ and $X^x$ are two strong solutions and since $h>0$, we have $\tilde{X}^{x+h}_t\ge X^x_t$ for any $t\ge 0$. In particular, the event $\tau\le \tilde{\tau}$ implies that $X^x_{\tau}=a$. Therefore on the event  $\tau\le \tilde{\tau}$,
\[
\xi_0\le a+h\exp\int_0^\phi|\alpha(u)|\,du=:a+h\Theta(\phi).
\]
By applying the comparison result described in Proposition \ref{comparison} (just replacing $\alpha$ by $\alpha(\cdot+\phi)$, $\beta$ by $\beta(\cdot+\phi)$ and $\tilde{\sigma}$ by $\tilde{\sigma}(\cdot+\phi)$) we obtain that $\xi_{\gamma(t)}\ge Z_t^T$ defined in \eqref{zTt} for all $t\le\gamma^{-1}(T)$. Of course $Z^T$ and $\xi$ have the same initial condition. If we denote by $\mathcal{T}_l$ the first passage time through the level $l$, then 
 \begin{equation}
 \mathbb{P}(\tilde{\tau} \geqslant \tau,T\geqslant \tau, \tilde{X}_{T\wedge\tilde{\tau}}^{x+h} -\tilde{X}^{x+h}_{\tau} > \delta) \leqslant \mathbb{P}_{a+h \Theta(T)}(\mathcal{T}_{a+\delta+h \Theta(T)}(Z^T) \leqslant \mathcal{T}_a(Z^T)),
 \label{majprobaTheta}
 \end{equation}
 since $\Theta(\phi)\le \Theta(T)$.\\
Let us now let $\delta$ depend on $h$, namely $\delta=\sqrt{h}$. Using the scale function of a drifted Brownian motion, we obtain in the small $h$ limit:
\begin{align*}
&\mathbb{P}_{a+h\Theta(T)}\Big(\mathcal{T}_{a+\delta+ h\Theta(T)}(Z^T) \leqslant \mathcal{T}_a(Z^T)\Big) = \frac{e^{-2\mu_T (a +h\Theta(T))}- e^{-2\mu_T a}}{e^{-2\mu_T(a+\delta)} - e^{-2 \mu_T a}}\\
&\sim -h\Theta(T)\frac{2\mu_T e^{-2 \mu_T a}}{e^{-2\mu_T (a+\delta)} - e^{-2 \mu_T a}}\sim -h\Theta(T)\frac{2\mu_T e^{-2 \mu_T a}}{\sqrt{h}} \\
&= -2\sqrt{h}\Theta(T) \mu_T e^{-2 \mu_T a}.
\end{align*}
Finally we observe that  $F(T,x+h)$ converges towards $F(T,x)$ as $h$ tends to $0_+$. By symmetry we obtain also the result for $h\to 0_-$.
\end{proof}

	

 


\begin{proposition}
\label{minderiv}
There exists $\kappa > 0$ such that for all $(t,x) \in \mathbb{R}_+ \times[a,b]$, $\frac{\partial F}{\partial x}(t,x) \geqslant \kappa.$
\end{proposition}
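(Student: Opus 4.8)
The plan is to bound the increment $F(t,x+h)-F(t,x)$ from below by a fixed multiple of $h$, uniformly in $(t,x)$, and then to divide by $h$ and let $h\to 0$. Since $F$ is increasing in the space variable and, by standard regularity for the value problem \eqref{edp}, the map $x\mapsto\partial_x F(t,x)$ is continuous, it suffices to exhibit $\kappa>0$ and $h_0>0$ such that $F(t,x+h)-F(t,x)\ge\kappa h$ for every $t\ge0$ and every $x,x+h\in[a,b]$ with $0<h\le h_0$.

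I would reuse the coupling of the two previous proofs: let $X^x$ and $X^{x+h}$ be the solutions of \eqref{edsl} driven by the same Brownian motion, so that $X^{x+h}_s-X^x_s=h\,e^{\int_0^s\alpha(u)\,\dd u}=:h\,\eta(s)>0$ for all $s$, and let $\tau,\tau'$ denote their exit times from $[a,b]$. Because $X^{x+h}\ge X^x$ pathwise, $X^{x+h}$ leaves through $b$ no later than $X^x$ and $X^x$ leaves through $a$ no later than $X^{x+h}$. Inspecting the possible scenarios as for the monotonicity statement (Figure \ref{figure}), the quantity $X^{x+h}_{t\wedge\tau'}-X^x_{t\wedge\tau}$ is always nonnegative, it equals $h\,\eta(t)$ on $\{\tau\wedge\tau'>t\}$, and it equals $b-a$ on the event $A_t$ that $X^x$ has already exited through $a$ and $X^{x+h}$ through $b$ by time $t$. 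Taking expectations,
\[
F(t,x+h)-F(t,x)\ \ge\ h\,\eta(t)\,\mathbb{P}\big(\tau\wedge\tau'>t\big)\ +\ (b-a)\,\mathbb{P}(A_t),
\]
and it remains to bound the right-hand side below by $\kappa h$.

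I would then split according to the position of $x$ and the size of $t$. On a fixed finite horizon $t\le T_0$ and for $x$ in a compact sub-interval $[a+\delta,b-\delta]$ the first term is enough: $\eta$ is bounded below on $[0,T_0]$, and by Proposition \ref{comparison} and Remark \ref{inversecomparison} both $X^x$ and $X^{x+h}$ are squeezed between two drifted Brownian motions on this horizon, whence $\mathbb{P}(\tau\wedge\tau'>t)$ is bounded below uniformly over such $x$ and $h$. For $x$ close to an endpoint, or for $t\ge T_0$, I would rely on the second term. The events $A_t$ increase, as $t\uparrow\infty$, to the event $A_\infty$ that $X^x$ exits through $a$ while $X^{x+h}$ exits through $b$; in the case of almost sure exit $\mathbb{P}(A_\infty)=q(x+h)-q(x)$, where $q(y):=\mathbb{P}_y(X_{\tau_{ab}}=b)$ is increasing, continuous, with $q(a)=0$, $q(b)=1$, and has derivative bounded away from $0$ on the compact $[a,b]$ (Hopf's lemma at the endpoints, minimum principle inside), so $\mathbb{P}(A_\infty)\ge\kappa_q h$; passing from $A_\infty$ to $A_t$ for $t\ge T_0$ only costs $\mathbb{P}(A_\infty\setminus A_t)$, which is controlled through a uniform bound $\sup_{y\in[a,b]}\mathbb{P}_y(\tau_{ab}>T_0)\to0$. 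Both the scale-function estimate for $q$ and this survival bound come, once again, from the drifted-Brownian comparison, in the spirit of the computation closing the proof of Proposition \ref{continuity}; the case where exit is not almost sure is handled analogously by keeping, in the decomposition above, the additional nonnegative terms produced by trajectories that do not leave $[a,b]$ by time $t$.

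The main obstacle is the uniformity in $t$: the comparison of Proposition \ref{comparison} deteriorates as the horizon grows — its drift $\mu_T$ is not bounded in $T$ — so the argument must be arranged so that the large-time regime is treated entirely through the horizon-free function $q$ (equivalently, through $\lim_{t\to\infty}F(t,\cdot)$), while the comparison is invoked only on a single fixed interval $[0,T_0]$, both to bound $\mathbb{P}(\tau\wedge\tau'>t)$ from below for $t\le T_0$ and $\mathbb{P}_y(\tau_{ab}>T_0)$ from above. Care is likewise required near the endpoints of $[a,b]$, where $\mathbb{P}_x(\tau\wedge\tau'>t)$ is no longer bounded away from $0$ and the term $(b-a)\mathbb{P}(A_t)$ is the one that must be used; continuity of $\partial_x F$ together with the datum $F(0,\cdot)=\mathrm{id}$ then patch the regimes together.
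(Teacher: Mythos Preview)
Your overall decomposition $F(t,x+h)-F(t,x)\ge h\,\eta(t)\,\mathbb{P}(\tau\wedge\tau'>t)+(b-a)\,\mathbb{P}(A_t)$ and the idea of treating large $t$ via the event $A_t$ are very close to what the paper does. The paper's events $E_{ab}$ and $E_{ba}$ are precisely explicit sub-events of your $A_t$ (for $t\ge2$), and the paper bounds their probability below by $c\,h$ through the comparison with drifted Brownian motion on the fixed horizon $[0,2]$, much as you suggest.

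There is, however, a genuine gap in your treatment of $q(y)=\mathbb{P}_y(X_{\tau_{ab}}=b)$. You invoke Hopf's lemma and the minimum principle to get $q'\ge\kappa_q>0$, but for a \emph{time-inhomogeneous} diffusion started at time $0$, the function $q$ does not solve an elliptic boundary-value problem; there is no operator $L$ with $Lq=0$ on $(a,b)$ to which Hopf or the strong minimum principle would apply. Your fallback remark that ``the scale-function estimate for $q$ comes from the drifted-Brownian comparison'' does not quite close this: a one-sided comparison gives bounds on $q$ itself, not on its increments $q(y+h)-q(y)$. To get $q(y+h)-q(y)\ge c\,h$ one must exhibit, with probability at least $c\,h$, a scenario in which $X^y$ exits through $a$ while $X^{y+h}$ exits through $b$. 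That is exactly what the paper carries out via the events $E_{ab},E_{ba}$, using the strong Markov property at the first exit time of one path and then the comparison for the shifted diffusion; it is not a consequence of an elliptic maximum principle.

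The paper also handles the finite-horizon regime $t\le2$ differently, and more robustly, than your ``interior/boundary'' split. Rather than bounding $\mathbb{P}(\tau\wedge\tau'>t)$ or $\mathbb{P}(A_t)$ there, it applies the \emph{parabolic} maximum principle to $\partial_x F$ itself, which \emph{does} satisfy a linear parabolic equation on $[0,2]\times[a,b]$ with continuous coefficients. This reduces the lower bound on $[0,2]\times[a,b]$ to the parabolic boundary: the initial side $t=0$ (where $\partial_xF=1$), the lateral sides $x=a,b$ (handled by the scale-function/comparison computation), and the side $t=2$ (already covered by the large-$t$ argument). This sidesteps the awkward corner of your plan --- moderate $t$ with $x$ close to an endpoint --- where neither of your two terms is obviously bounded below by $c\,h$ without reproducing the paper's detailed construction.
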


\begin{proof}
First let us recall that $F$ has a probabilistic representation given by \eqref{defF}. We shall use this representation in order to lower bound the space derivative. We consider two different cases: small times, that is $t\le 2$, or large times $t>2$.\\
\underline{First case: $t> 2$.}\\
We denote by $\tau^x$ the first time the process $X^x$ starting at $x$ exits from the interval $]a,b[$ and by $\tau_-^x$ (respectively $\tau_+^x$) the first exit time from $]a,b_h[$ (resp. from the first exit time from $]a_h,b[$) with
\begin{equation}
b_h:=b-he^{\int_0^1|\alpha(s)|\,ds}\quad\mbox{and}\quad a_h:=a+he^{\int_0^1|\alpha(s)|\,ds}.
\label{ahbh}
\end{equation}
We also introduce $(Y^\pm_t)$ the solutions of the shifted SDEs:
\begin{equation}
\label{eq:edsmark}
dY^-_t=(\alpha(t+\tau^x_-)Y_t+\beta(t+\tau^x_-))\,dt+\tilde{\sigma}(t+\tau^x_-)dW_{t+\tau^x_-}, 
\end{equation}
with the initial condition $Y^-_0=a+he^{-\int_0^1|\alpha(s)|\,ds}$ and 
\begin{equation}
\label{eq:edsmark2}
dY^+_t=(\alpha(t+\tau^{x+h}_+)Y_t+\beta(t+\tau^{x+h}_+))\,dt+\tilde{\sigma}(t+\tau^{x+h}_+)dW_{t+\tau^{x+h}_+}, 
\end{equation}
with the initial condition $Y^+_0=b-he^{-\int_0^1|\alpha(s)|\,ds}$. We associate  the stopping times $\mathcal{T}(Y^\pm)$, the exit time from $]a,b[$ and $\mathcal{T}_a(Y^\pm)$ (resp. $\mathcal{T}_b(Y^\pm)$) the first passage times through levels $a$ and $b$ to these diffusions. 

In order to minimize the derivative of $F$, we need to lower bound the following expectation, for $h>0$:
\[
F(t,x+h)-F(t,x)=\mathbb{E}[X^{x+h}_{\tau^{x+h}\wedge t}-X^{x}_{\tau^{x}\wedge t}].
\]
Let us observe particular scenarios which permit the difference between the diffusions to be equal to the maximal value $b-a$. To that end, we introduce two events:

\[
E_{ab}:=\{ \tau^x_-\le 1, X^x_{\tau^x_-}=a,\, \mathcal{T}(Y^-)\le 1, Y^-_{\mathcal{T}(Y^-)}=b \},
\] \[
E_{ba}:=\{ \tau^{x+h}_+\le 1, X^{x+h}_{\tau^{x+h}_+}=b,\, \mathcal{T}(Y^+)\le 1, Y^+_{\mathcal{T}(Y^+)}=a \}.
\]
By Lemma \ref{lemmappendix} and Lemma \ref{lemmappendixbis} (presented in the Appendix)  $E_{ab}\cap E_{ba}=\emptyset$ and $E_{ab}\cup E_{ba}\subset \{X^{x+h}_{\tau^{x+h}\wedge t}- X^x_{\tau^x\wedge t}=b-a \}$ for all $t\ge 2$. Hence
\begin{equation}
F(t,x+h)-F(t,x)\ge (b-a)(\mathbb{P}( E_{ab})+\mathbb{P}( E_{ba})).
\label{minF}
\end{equation}
Let us first deal with $\mathbb{P}( E_{ab})$. Conditionally to $\tau^x_-=\phi$, the strong Markov property of the diffusion process implies that $Y_t^-$ has the same distribution as the solution of the SDE :
\begin{equation}
\label{eq:edsmark3}
d\xi_t=(\alpha(t+\phi)\xi_t+\beta(t+\phi))\,dt+\tilde{\sigma}(t+\phi)dB_{t}, \quad \xi_0=Y_0^-,
\end{equation}
where $(B_t)$ is a standard Brownian motion.  Since $\phi\le 1$ and $\mathcal{T}(Y^-)\le 1$ on the event $E_{ab}$, we need to describe the paths of the initial diffusions $X^x$ and $X^{x+h}$ on a time interval of length at most equal to $2$. We can easily adapt the comparison result of Proposition \ref{continuity} to obtain that $\xi_t\ge Z_{\gamma(t)}^T$ for all $t\le 1$ and $T=2$ ($Z_t^T$ being defined in the statement of Proposition \ref{comparison}). Let us notice that $\gamma$ here depends on $\phi$.
We deduce that
\begin{align}\label{min}
\mathbb{P}\Big(\mathcal{T}(Y^-)\le 1, Y^-_{\mathcal{T}(Y^-)}\Big)
&= \mathbb{P}\Big(\mathcal{T}_b(Z)\le \gamma^{-1}(1), Z_{\mathcal{T}(Z)}=b\Big|\tau^x_-=\phi\Big)\nonumber \\
&\ge \mathbb{P}\Big(\mathcal{T}_b(Z)\le \underline{\sigma}^2, Z_{\mathcal{T}(Z)}=b\Big|\tau^x_-=\phi\Big),
\end{align}
where $\underline{\sigma}$ is the uniform lower bound of $\tilde{\sigma}(t)$.  Indeed 
\[
\gamma^{-1}(1)=\int_0^1\tilde{\sigma}^2(s+\phi)\,ds\ge \underline{\sigma}^2.
\]
We observe that the lower bound in \eqref{min} does not depend on $\phi$. Consequently
\begin{align*}
\mathbb{P}(E_{ab})&=\mathbb{E}\Big[ 1_{\{ \tau^x_-\le 1, X^x_{\tau^x_-}=a \}}\mathbb{P}\Big(\mathcal{T}(Y^-)\le 1, Y^-_{\mathcal{T}(Y^-)}=b\Big|\tau^x_-\Big) \Big]\\
&\ge \mathbb{P}\Big(  \tau^x_-\le 1, X^x_{\tau^x_-}=a\Big)\mathbb{P}\Big(\mathcal{T}_b(Z)\le \underline{\sigma}^2, Z_{\mathcal{T}(Z)}=b\Big).
\end{align*}
Let us assume now that $x\in]a,\frac{a+b}{2}]$ and $h\le h_0$. By comparison, the trajectory of $X^x$ always stays below that of $X^{(a+b)/2}$.  Setting $r_h=(b-a)/(b_h-a)$ we get
\begin{align*}
\mathbb{P}\Big(  \tau^x_-&\le 1, X^x_{\tau^x_-}=a\Big)=\mathbb{P}\Big(  \mathcal{T}(r_hX^x+a(1-r_h))\le 1, X^x_{\tau^x_-}=a \Big)\\
&=\mathbb{P}\Big(  \mathcal{T}_a(r_hX^x+a(1-r_h))\le 1, X^x_{\tau^x_-}=a \Big)\\
&=\mathbb{P}\Big(  \mathcal{T}_a(X^x)\le 1, \mathcal{T}_a(X^x)<\mathcal{T}_{b}(r_hX^x+a(1-r_h)) \Big)\\
&\ge \mathbb{P}\Big(  \mathcal{T}_a(X^{(a+b)/2})\le 1, \mathcal{T}_a(X^{(a+b)/2})<\mathcal{T}_{b}(r_h X^{(a+b)/2}+a(1-r_h)) \Big)\\
&\ge \mathbb{P}\Big(  \mathcal{T}_a(X^{(a+b)/2})\le 1, \mathcal{T}_a(X^{(a+b)/2})<\mathcal{T}_{b}(r_{h_0} X^{(a+b)/2}+a(1-r_{h_0})) \Big)\\
&=:\kappa_1
\end{align*}
where $\kappa_1$ is a positive constant independent of both $h$ and $x$. Hence
\begin{equation}
\mathbb{P}(E_{ab})\ge \kappa_11_{]a,\frac{a+b}{2}]}(x)\Psi(h),\quad\mbox{with}\quad \Psi(h):=\mathbb{P}\Big(\mathcal{T}_b(Z)\le \underline{\sigma}^2, Z_{\mathcal{T}(Z)}=b\Big).
\label{psip}
\end{equation}
It suffices to lower bound the function $\Psi$ using scale functions and an independent exponential random variable which permits to relate the computation of $\Psi$ to a particular Laplace transform whose expression is explicit (see, \cite{borodin-salminen} p309).\\
Let $\mathcal{E}$ be an exponentially distributed random variable with parameter $\lambda$ and let $h_{\alpha} = he^{-\int^1_0\vert\alpha(u)\vert du}$. Then $\Psi(h)$ can be lower-bounded by the difference of $\Psi_1(h)$ and $\Psi_2(h)$:
\begin{align*}
\Psi(h)
&\geqslant \mathbb{P}_{a+h_{\alpha}}\Big(\mathcal{T}(Z)\le \mathcal{E},\mathcal{T}(Z) = \mathcal{T}_b(Z)\Big)\\
& \quad - \mathbb{P}_{a+h_{\alpha}}\Big(\mathcal{T}(Z) = \mathcal{T}_b(Z), \mathcal{T}(Z) \leqslant \underline{\sigma}^2, \mathcal{E} > \underline{\sigma}^2\Big)
&= \Psi_1(h) -\Psi_2(h).
\end{align*}
The first term of the r.h.s $\Psi_1(h)$ is evaluated as follows
\begin{align*}
\Psi_1(h)
&= \mathbb{E}_{a+h_{\alpha}}\Big[e^{- \lambda \mathcal{T}}\ 1_{\{\mathcal{T}(Z) = \mathcal{T}_b(Z)\}}\Big]= e^{\mu(b-a-h_{\alpha})} \frac{\sinh(h_{\alpha}\sqrt{2 \lambda + \mu^2})}{\sinh((b-a)\sqrt{{2 \lambda + \mu^2})}}\\
&\sim e^{\mu(b-a)} \frac{he^{-\int^1_0\vert\alpha(u)\vert du}\sqrt{2 \lambda + \mu^2}}{\sinh((b-a)\sqrt{{2 \lambda + \mu^2})}}, \mbox{ as } h \mbox{ tends to } 0.
\end{align*}
The second term $\Psi_2(h)$ has to be upper bounded:
\begin{align*}
\Psi_2(h)
&= \mathbb{P}_{a+h_{\alpha}}\Big(\mathcal{T}(Z) = \mathcal{T}_b(Z), \mathcal{T}(Z) \leqslant \underline{\sigma}^2,\Big)\mathbb{P}\Big(\mathcal{E} > \underline{\sigma}^2\Big)\\
&\leqslant \mathbb{P}_{a+h_{\alpha}}\Big(\mathcal{T}(Z) = \mathcal{T}_b(Z)\Big)e^{-\lambda \underline{\sigma}^2}.
\end{align*}
Using the scale function of the drifted Brownian motion, we obtain
\begin{align*}
\mathbb{P}_{a+h_{\alpha}}\Big(\mathcal{T}(Z) = \mathcal{T}_b(Z)\Big) &=  e^{-\int_0^1|\alpha(u)|du}\frac{e^{-2\mu (a + h_{\alpha})}- e^{-2\mu a}}{e^{-2\mu b} - e^{-2 \mu a}}\\
&\sim -h e^{-\int_0^1|\alpha(u)|du}\frac{2\mu e^{-2 \mu a}}{e^{-2\mu b} - e^{-2 \mu a}} \mbox{ as } h \mbox{ tends to } 0.
\end{align*}
If the parameter of the exponentially distributed r.v. becomes large then it is easy to prove that $\Psi_2(h)$ becomes negligible with respect to $\Psi_1(h)$. Consequently we can choose a particular value of $\lambda$ which leads to $2\Psi_2(h)\le \Psi_1(h)$ and therefore permits to bound $\Psi(h)$ by below.\\
Combining \eqref{psip} and the description of $\Psi(h)$, we manage to bound $\mathbb{P}(E_{ab})$ by below for $x\le (a+b)/2$. In the case where the starting point of the diffusion is in the lower part of the interval, we bound $\mathbb{P}(E_{ba})$ by below with the value $0$ which implies the existence of a strictly positive lower bound of $\mathbb{P}(E_{ab})+\mathbb{P}(E_{ba})$. In the other case (i.e. the starting point is in the upper part of the interval), we bound $\mathbb{P}(E_{ab})$ by below with the value $0$ and deal with $\mathbb{P}(E_{ba})$ in a similar way as previously described. In any case, the inequality \eqref{minF} leads to the existence of $\kappa>0$ such that
\begin{equation*}
\frac{\partial F}{\partial x}(t,x) \geqslant \kappa , \quad \forall (t,x)\in[2,\infty[\times[a,b].
\end{equation*}
\underline{Second case: $t\leqslant 2$.}\\
First we consider the derivative at the boundary of the interval $[a,b]$. Let us note that $F(t,a)=a$. Hence $\frac{\partial F}{\partial x}(t,a)=\lim_{h\to 0^+}\frac{1}{h}\,(\mathbb{E}_{a+h}[X_{t\wedge\tau_{ab}}]-a)$. Since we need a lower bound, we shall use a comparison result concerning the L-class diffusions. Proposition 3.14 leads to $Z_{\gamma(t)}^T\le X_t$ for all $t\le T$. We set here $T=2$ and $\mu_T$ is defined in the statement of the proposition. If $\mu_T\ge 0$ then we replace it by a strictly negative value and therefore the comparison result remains true. So we assume for the sequel that $\mu_T<0$. We deduce that 
\begin{equation}
\mathbb{E}_{a+h}[X_{t\wedge\tau_{ab}}]-a\ge \mathbb{E}_x[Z_{\gamma(t)\wedge\tau_{ab}}^T],
\label{minesp}
\end{equation}
 where $\tau_{ab}$ stands either for the exit time of $X$ either for the exit time of $Z$. Let us now consider the convex function $f(x)=e^{-2\mu_T x}$. It is well known that
\[
x-a\ge \frac{(b-a)}{f(b)-f(a)}\,(f(x)-f(a)), \forall x \in ]a,b[.
\]
As $f$ is the scale function of the drifted Brownian motion, $f(Z^T_t)$ is a martingale and the optimal stopping theorem leads to 
\begin{align*}
\mathbb{E}_x[Z^T_{\gamma(t) \wedge \tau_{ab}}-a]&\ge \frac{(b-a)}{f(b)-f(a)}\,\mathbb{E}_x\Big[e^{-2\mu_T Z^T_{\gamma(t) \wedge \tau_{ab}}} -e^{-2\mu_T a} \Big]\\
&=\frac{(b-a)}{f(b)-f(a)}\Big(e^{-2\mu_T x} -e^{-2\mu_T a} \Big).
\end{align*}
In particular, for $x=a+h$,
\begin{align*}
\mathbb{E}_{a+h}[Z^T_{t\wedge \tau_{ab}}-a]&\ge (b-a)e^{2\mu_T(b-a)}(e^{-2\mu_T h} -1)\\
&\sim -2\mu_T h(b-a)e^{2\mu_T(b-a)}, \mbox{ as h tends to } 0.
\end{align*}
We obtained the existence of a constant $\eta^a_T>0$ such that $\frac{\partial F}{\partial x}(t,a) \geqslant \eta^a_T$, for any $t \leqslant 2$. By similar arguments,  we can obtain $\frac{\partial F}{\partial x}(t,b) \geqslant \eta^b_T$, for all $t \leqslant 2$. Since $\frac{\partial F}{\partial x}(t,x)$ satisfies a second order parabolic PDE with regular coefficients, we can apply the maximum principle (see, for instance, \cite{Evans} or \cite{Friedman}). Consequently the minimum of the derivative on the domain $[0,2]\times[a,b]$ is reached at the boundary. Let us observe what happens on each side of this rectangle. For $x=a$ we have just proven that there exists a minimum which is strictly positive so is it for $x=b$. For $t=0$ the derivative is equal to $1$ and for $t=2$ the first part of the proof ensures the derivative to be minimized. To sum up, the derivative is lower bounded by a strictly positive constant on the whole rectangle  $[0,2]\times[a,b]$.
\end{proof}

\begin{proposition}
\label{distF}
There exists two constants $\kappa_a>0$ and  $\kappa_b > 0$ such that 
\begin{equation}
F(t,x) - a \leqslant \kappa_a (x-a) \text{   and   } b - F(t,x) \leqslant \kappa_b(b-x),
\end{equation}
for all $(t,x)\in\mathbb{R}_+\times[a,b]$.
\end{proposition}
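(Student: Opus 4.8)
The plan is to establish the first inequality $F(t,x)-a\le\kappa_a(x-a)$, the second one $b-F(t,x)\le\kappa_b(b-x)$ being obtained by the mirror argument in which the upper comparison of Remark~\ref{inversecomparison} is replaced by the lower comparison of Proposition~\ref{comparison}. The basis is the probabilistic representation \eqref{defF}, which gives $F(t,x)-a=\mathbb{E}_x[X_{t\wedge\tau_{ab}}-a]$ with $0\le X_{t\wedge\tau_{ab}}-a\le b-a$, together with the following pathwise principle: if a drifted Brownian motion $Z$ dominates the diffusion, in the sense that $Z_s\ge X_s$ for every $s$ up to some deterministic horizon, then $X_{s\wedge\tau_{ab}}-a\le Z_{s\wedge\tau^Z}-a$ pathwise on that window, where $\tau^Z$ denotes the exit time of $Z$ from $[a,b]$. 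Indeed, as long as $X$ has not exited one has $Z_s\ge X_s>a$, so $Z$ cannot exit through $a$ before $X$; hence whenever $X$ exits through $b$ the process $Z$ has already exited through $b$ as well, and a quick inspection of the three cases according to which of $\tau_{ab}$, $\tau^Z$ precedes the current time yields the inequality. Working with the time-changed process $Y_u=X_{\gamma(u)}$, where $\gamma$ is defined by $\int_0^{\gamma(t)}\tilde{\sigma}^2(s)\,ds=t$ and which solves \eqref{eq:eds}, and invoking Remark~\ref{inversecomparison}, one gets for every horizon $T$ a drifted Brownian motion $Z^T$ with constant drift $\mu_T$ such that, since $X_{t\wedge\tau_{ab}}=Y_{\gamma^{-1}(t)\wedge\tau^Y}$, one has $X_{t\wedge\tau_{ab}}-a\le Z^T_{\gamma^{-1}(t)\wedge\tau^{Z^T}}-a$ for all $t\le T$.

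I then fix once and for all the horizon $T=2$, write $\mu:=\mu_2$ for the corresponding finite drift, and split the argument into $t\le 2$ and $t>2$. For $t\le 2$, taking expectations in the pathwise bound and using the elementary identity $\mathbb{E}_x[Z_{s\wedge\tau^Z}]=x+\mu\,\mathbb{E}_x[s\wedge\tau^Z]$ (optional stopping of $Z_u-\mu u$ at the bounded time $s\wedge\tau^Z$, with $s=\gamma^{-1}(t)\le\gamma^{-1}(2)$), I obtain $F(t,x)-a\le (x-a)+|\mu|\,\mathbb{E}_x[\tau^Z]$. Since $x\mapsto\mathbb{E}_x[\tau^Z]$ is the smooth solution of a second order ODE with bounded coefficients, is nonnegative on $[a,b]$ and vanishes at $a$, it is bounded by a constant multiple of $x-a$ on $[a,b]$, which proves the claim on $[0,2]\times[a,b]$.

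For $t>2$ I split $F(t,x)-a=\mathbb{E}_x\big[(X_{\tau_{ab}}-a)\,1_{\{\tau_{ab}\le 2\}}\big]+\mathbb{E}_x\big[(X_{t\wedge\tau_{ab}}-a)\,1_{\{\tau_{ab}>2\}}\big]$, the stopping at $\tau_{ab}$ on $\{\tau_{ab}\le 2\}$ being justified by $t>2$. The first term equals $(b-a)\,\mathbb{P}_x(\tau_{ab}\le 2,\ X_{\tau_{ab}}=b)$ and, by the pathwise principle applied on the window $[0,2]$, this probability is at most $\mathbb{P}_x(Z^2_{\tau^{Z^2}}=b)$, the exit-at-$b$ probability of a drifted Brownian motion; the second term is at most $(b-a)\,\mathbb{P}_x(\tau_{ab}>2)\le(b-a)\,\mathbb{P}_x\big(\mathcal{T}_a(Z^2)>\gamma^{-1}(2)\big)$, again by comparison on $[0,2]$. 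Both $x\mapsto\mathbb{P}_x(Z^2_{\tau^{Z^2}}=b)$ and $x\mapsto\mathbb{P}_x\big(\mathcal{T}_a(Z^2)>\gamma^{-1}(2)\big)$ are explicit via the scale function and the hitting-time law of the drifted Brownian motion, are smooth on $[a,b]$ and vanish at $x=a$, hence are bounded by a constant multiple of $x-a$. Adding the two contributions gives $F(t,x)-a\le\kappa_a(x-a)$ for $t>2$ as well, which together with the previous case completes the proof of the first inequality; the second one is entirely symmetric.

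The step I expect to be the main obstacle is keeping the constant independent of $t$. Comparing on the horizon $T=t$ would be the obvious move, but without Assumptions~\ref{assump1}--\ref{assump2} the drift $\mu_t$ of the dominating Brownian motion grows with $t$, so the resulting constant would blow up; the whole point of the splitting above is that in the large-time regime everything is reduced to the quantities $\mathbb{P}_x(\tau_{ab}\le 2,X_{\tau_{ab}}=b)$ and $\mathbb{P}_x(\tau_{ab}>2)$, which involve only the behaviour of the diffusion on the fixed window $[0,2]$ and are therefore governed by the single drift $\mu_2$. The other delicate point, underlying every estimate, is to verify carefully that the domination $Z\ge X$ remains compatible with stopping both processes at their exit from $[a,b]$ — this is precisely the pathwise principle stated in the first paragraph, and its verification requires the small case distinction described there.
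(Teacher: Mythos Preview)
Your argument is correct and rests on the same core idea as the paper's proof: compare $X$ with a drifted Brownian motion $Z$ on a fixed time window via Remark~\ref{inversecomparison}, split into short and long times, and reduce everything to quantities for $Z$ that vanish at $x=a$ with bounded derivative. The organization, however, is genuinely different. The paper glues $Z^T$ on $[0,1]$ to the $Y$-dynamics thereafter so as to obtain a process dominating $X_{\gamma(\cdot)}$ for \emph{all} times, and then decomposes according to the exit time $\tau_{ab}(Z)$ of that hybrid process; the tail $\{\tau_{ab}(Z)>1\}$ is controlled by Markov's inequality and the explicit mean exit time $g(x)=\mathbb{E}_x[\tau_{ab}(Z^T)]$. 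You stay entirely on the fixed window, decompose according to $\tau_{ab}(X)$, and control the tail $\{\tau_{ab}>2\}$ via the one-sided hitting probability $\mathbb{P}_x(\mathcal{T}_a(Z^2)>\gamma^{-1}(2))$. Your ``pathwise principle'' $X_{s\wedge\tau_{ab}}-a\le Z_{s\wedge\tau^Z}-a$ replaces the hybrid-process construction and is arguably cleaner; the price is that you need the smoothness of the hitting-time distribution of a drifted Brownian motion at the boundary, whereas the paper only uses the scale function and the explicit ODE solution for $g$. Both routes yield constants depending only on the diffusion coefficients on a fixed compact time interval, which is the crucial point you correctly identified.
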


\begin{proof}
Let us recall the probabilistic representation: $F(t,x)=\mathbb{E}[X^x_{t\wedge\tau_{ab}}]$.\\
We set $T=\gamma(1)$  and consider $(Z^T_t)$ the diffusion introduced in Remark \ref{inversecomparison} with initial condition $Z^T_0=X_0^x=x$. We construct a new continuous diffusion process $(Z_t)$ which is equal to $(Z_t^T)$ on the time interval $[0,1]$ and which satisfies the following SDE otherwise:
\begin{align*}
dZ_t=\displaystyle\Big( \frac{\alpha(\gamma(t))}{\tilde{\sigma}^2(\gamma(t))}\, Z_t+  \frac{\beta(\gamma(t))}{\tilde{\sigma}^2(\gamma(t))}\Big)\,dt+dW_t,\quad t>1.
\end{align*}
Extending the comparison result of Remark \ref{inversecomparison}, we know that  $Z_t\ge X_{\gamma(t)}$ for all $t\ge 0$. Hence
\[
F(t,x)-a\le\mathbb{E}_x[Z_{\gamma^{-1}(t)\wedge \tau_{ab}(Z)}- a].
\]
We split the study into two different cases :
\begin{itemize}
\item First case: $\gamma^{-1}(t)\le 1$. As in the proof of Proposition \ref{minderiv}, the function $f(x)=e^{-2\mu x}$ plays an important role since $f(Z_t)$ is a martingale for $t\le \gamma(1)$. Using twice the Lagrange mean theorem combined with the optional stopping theorem implies
\begin{align*}
F(t,x)-a &\le \eta_1 \mathbb{E}_x\Big[e^{-2\mu_T Z^T_{t\wedge \tau_{ab}}} -e^{-2\mu_T a} \Big]=\eta_1\Big(e^{-2\mu_T x} -e^{-2\mu_T a} \Big)\\
&\le \kappa_a(x-a),
\end{align*}
where $\kappa_a=\Big( \sup_{x\in[a,b]} f'(x) \Big)\Big( \inf_{x\in[a,b]} f'(x) \Big)^{-1}$.

\item Second case: $\gamma^{-1}(t)>1$. We decompose $F$ as follows
\begin{align*}
F(t,x)-a
&\le \mathbb{E}_x[(Z_{\gamma^{-1}(t)\wedge \tau_{ab}(Z)}-a)1_{\{ \tau_{ab}(Z)>1 \}}]\\
&+\mathbb{E}_x[(Z_{\gamma^{-1}(t)\wedge \tau_{ab}(Z)}-a)1_{\{ \tau_{ab}(Z)\le 1 \}}]\\
&\le (b-a)\mathbb{P}_x(\tau_{ab}(Z^T)>1)+\mathbb{E}_x[(Z^T_{1\wedge \tau_{ab}(Z^T)}-a)1_{\{ \tau_{ab}(Z^T)\le 1 \}}]\\
&\le (b-a)\mathbb{E}_x[\tau_{ab}(Z^\mu)]+\mathbb{E}_x[Z^\mu_{1\wedge \tau_{ab}(Z^\mu)}]-a.
\end{align*}
The expression $\mathbb{E}_x[Z^T_{1\wedge\tau_{ab}(Z^T)}]-a$ can be bounded using similar arguments (Lagrange's mean and optional stopping theorems) as those presented in the first part of the proof. Moreover, let us note that the function $g(x):=\mathbb{E}_x[\tau_{ab}(Z^T)]$ is solution (\cite{bass}, page 45, Theorem 1.2) of
\[
\frac{1}{2}\,g''+\mu_T g'=-1\quad
\mbox{for}\ x\in]a,b[\quad \mbox{and}\ g(a)=g(b)=0.
\]
We recall that $T=\gamma(1)$ and $\mu_T$ is defined in Remark \ref{inversecomparison}. 
The explicit solution of this equation is given by 

\begin{equation*}
g(x) = \frac{(b-a)(e^{-2 \mu_T a} - e^{-2 \mu_T x})}{\mu_T(e^{-2 \mu_T a} - e^{-2 \mu_T b})} - \frac{(x-a)}{\mu_T}.
\end{equation*} 
Applying once again Lagrange's mean theorem, we obtain the existence of a constant $C_g>0$ such that  $g(x) \leqslant C_g(x-a)$ for all  $x\in [a,b]$. Using similar arguments (just replacing Remark \ref{inversecomparison} by Proposition \ref{comparison}), we also prove that $b - F(t,x) \leqslant \kappa_b(b-x)$.
\end{itemize}\end{proof}

\subsubsection*{Proof of Theorem \ref{meanL}.}

We already presented all the necessary ingredients in order to prove the statement of Theorem  \ref{meanL} which concerns the average number of steps.

\begin{proof}
Our choice for the bound of the average number of steps is based on the martingale theory.  We recall that $F$ is defined by \eqref{defF} and introduce another important function $H$ defined by $H=V\circ F$ with
\begin{equation}
\label{defV}
V(x)=\log\left(  \frac{(x-a)(b-x)}{\gamma\epsilon(b-a-\gamma\epsilon)} \right).
\end{equation}
Let us note that $V$ is non negative on the whole interval $[a+\gamma\epsilon,b-\gamma\epsilon]$. Since $F$ is a the solution of \eqref{edp}, the function $H$ just introduced satisfies the following partial differential equation:
\begin{equation}
\frac{\partial H}{\partial t } + (\alpha(t)x + \beta(t))\frac{\partial H}{\partial x} + \frac{1}{2}\tilde{\sigma}(t)^2\frac{\partial^2 H}{\partial x^2} = \frac{1}{2}\tilde{\sigma}(t)^2V''(F(t,x))\left(\frac{\partial F}{\partial x}(t,x)\right)^2.
\label{edpbis}
\end{equation}
Let us also recall that $(T_n,X_n)$ defined in \eqref{defTn} is the sequence of successive exit times and exit positions issued from {\sc Algorithm}$_m$.\\ 
We focus our attention on the sequence  $Z_n=H(X_n)+G(n)$ with  $G(0)=0$. Here $G$ stands for a positive function, we are going to precise this function in the sequel. This stochastic process is a super-martingale with respect to the Brownian filtration $(\mathcal{F}_{T_n})_{n \in \mathbb{N}}$.
Using Itô's formula and the partial differential equation satisfied by $H$, we obtain  for $\mathcal{D}_{n}:=\mathbb{E}[Z_{n+1}-Z_n|\mathcal{F}_{T_n}]$, 
\begin{align*}
\mathcal{D}_n &=  \mathbb{E}\left[\int^{T_{n+1}}_{T_n} \frac{\partial H}{\partial t }(s,X_s) + (\alpha(s)X_s + \beta(s))\frac{\partial H}{\partial x}(s,X_s)\right. \\
&\quad + \left.\left.\frac{1}{2}\tilde{\sigma}(s)^2\frac{\partial^2 H}{\partial x^2}(s,X_s)ds \right\vert \mathcal{F}_{T_n}\right]\\
&\quad +\mathbb{E}[M_{n+1} - M_n \vert \mathcal{F}_{T_n}] +(G(n+1)- G(n))\\
&= \mathbb{E}\left.\left[\int^{T_{n+1}}_{T_n} \frac{1}{2}\tilde{\sigma}(s)^2 V''(F(s,X_s))\left(\frac{\partial F}{\partial x}(s,X_s)\right)^2ds \right\vert \mathcal{F}_{T_n}\right]\\
& \quad + (G(n+1)- G(n)),
\end{align*}
where $(M_n)_{n \in \mathbb{N}}= \left( \int^{T_n}_0 \tilde{\sigma}(s)\frac{\partial H}{\partial x}(s,X_s) dW_s\right)_{n \in \mathbb{N}}$ is a martingale.
Using Proposition \ref{minderiv}, Proposition \ref{distF} and the lower bound $\underline{\sigma}$ of $\tilde{\sigma}$ we obtain

\begin{equation}
\mathcal{D}_n  \le -\frac{1}{2}\underline{\sigma}^2\kappa^2(\mathcal{I}(a)+\mathcal{I}(b))+G(n+1)-G(n),  
\label{martI}
\end{equation}
where $\mathcal{I}(x)=\mathbb{E}\left[\left. \int_{T_n}^{T_{n+1}}\frac{1}{\kappa_x^2(X_s-x)^2}\, ds   \right| \mathcal{F}_{T_n}\right]$.\\
We aim to bound by below the previous integral by considering the shape of the $n^{th}$ spheroid:
\begin{align}
\psi_+^L(t)-a_{\gamma,X_{n}} &\leqslant  d_n \Delta_m + X_{n} -a_{\gamma,X_{n}}\nonumber\\
&\leqslant \min(1, \kappa_-)(X_{n}-a_{\gamma,X_{n}}) + X_{n} -a_{\gamma,X_{n}}\nonumber\\
&\le 2(X_{n}-a).
\label{majboundLa}
\end{align}
%
%
%
\noindent This bound implies
\begin{align*}
\mathcal{I}(a) &\geqslant \mathbb{E}\left.\left[\int^{T_{n+1}}_{T_n} \frac{ds}{4\kappa_a^2(X_{n}-a)^2}\right\vert \mathcal{F}_{T_n}\right]= \mathbb{E}\left.\left[ \frac{T_{n+1}-T_n}{4\kappa_a^2(X_{n}-a)^2}\right\vert \mathcal{F}_{T_n}\right]\\
&= \mathbb{E}\left.\left[ \frac{\rho_L^{-1}(\rho_L(T_n)+\tau_{n+1})}{4\kappa_a^2(X_{n}-a)^2}\right\vert \mathcal{F}_{T_n}\right]
\end{align*}
where $\tau_{n+1}$ is the Brownian exit time from the spheroid of parameter size $d_n$.
\begin{equation*}
\mathcal{I}(a) \ge \mathbb{E}\left.\left[ \frac{\tau_{n+1}}{4\kappa_a^2 r_n (X_{n}-a)^2}\right\vert \mathcal{F}_{T_n}\right]
\end{equation*}
where $r_n$ is the maximum of the derivative $\rho'$ on the time interval $[T_n,T_n+m]$ which contains $[T_n, \rho^{-1}_L(\rho_L(T_n)+\tau_{n+1}) ]$.
We note that $\tau_{n+1}\sim d^2_n\tau$ where $\tau$ denotes the Brownian exit time from the Brownian spheroid of parameter $1$. Hence
\begin{equation*}
\mathcal{I}(a)
\ge\frac{d_n^2}{4\kappa_a^2 r_n (X_{n}-a)^2}\ \mathbb{E}[\tau].
\end{equation*}
Similarly to \eqref{majboundLa} we have $b_{\gamma,X_n}-\psi^L_-(t)\le 2(b-X_n)$ and the same arguments just presented lead to
\begin{equation*}
\mathcal{I}(b)=\mathbb{E}\left.\left[\int^{T_{n+1}}_{T_n} \frac{ds}{\kappa_b^2(b -X_s)^2} \right\vert \mathcal{F}_{T_n}\right] \geqslant \frac{d_n^2}{4\kappa_b^2 r_n (b - X_{n})^2}\ \mathbb{E}[\tau].
\end{equation*}
Setting $\kappa_{ab} = \max(\kappa_a,\kappa_b)$, we obtain

\begin{equation*}
\mathcal{D}_n \leqslant -\frac{d_n^2}{r_n \kappa_{ab}^2}\ \mathbb{E}[\tau]\left(\frac{1}{(b - X_{n})^2} +\frac{1}{( X_{n}-a)^2}\right)+ G(n+1)- G(n).
\end{equation*}
Let us first consider the case: $X_{n}-a \leqslant b - X_{n} $ (the other case can be studied in a similar way, it suffices to replace $X_n-a_{\gamma,X_{n}}$ by $b_{\gamma,X_{n}}- X_n$). Then $d_n= \frac{\min(1, \kappa_-)}{\Delta_m}(X_{n}-a_{\gamma,X_{n}})$ and
\begin{align*}
\mathcal{D}_n &\leqslant -2\frac{d_n^2}{r_n \kappa_{ab}^2}\ \mathbb{E}[\tau]\frac{1}{(X_{n}-a)^2} + G(n+1)- G(n)\\
&\leqslant -\frac{\min(1, \kappa_-)^2}{r_n \Delta_m^2\kappa_{ab}^2}\ \mathbb{E}[\tau] + G(n+1)- G(n).
\end{align*}
We finally find $G$ by seeking a lower bound of $\frac{\min(1, \kappa_-)^2}{r_n\Delta_m^2}$.
We consider two different cases:\\
First case: $\kappa_-\ge1$. We introduce $\alpha_n$, $\beta_n$ and $\tilde{\sigma}_n$ the maximum of $|\alpha |$ respectively $|\beta|$ and $\tilde{\sigma}$ on the time interval $[0,nm]$. The definition of $\Delta_m$ given by \eqref{deltam} and the definition of $\rho$ in Lemma \ref{frontL} lead to
\begin{align*}
\Delta_m^2 r_n &\leqslant e^{4\int_{T_n}^{T_n+m}|\alpha(s)|\,ds}\tilde{\sigma}_n^2 \left( \frac{1}{\sqrt{e}} +\sqrt{\int^{T_n+m}_{T_n} \frac{\vert \beta(s)\vert^2}{\tilde{\sigma}(s)^2}ds}\right)^2\\ 
 &\leqslant e^{4m \alpha_n}\tilde{\sigma}_n^2 \left( \frac{1}{\sqrt{e}} +\sqrt{m} \frac{\beta_n}{\underline{\sigma}}\right)^2. 
\end{align*}
For the other case: $\kappa_-<1$
\begin{align*}
\frac{\min(1, \kappa_-)^2}{r_n \Delta_m^2}&\geqslant \frac{\rho(T_n + m) - \rho(T_n)}{r_n (b-a)^2}= \frac{\int_0^m \rho'(T_n +s)ds}{r_n(b-a)^2}.
\end{align*}
Using the definitions of $\rho$, $r_n$ and the continuity of $\tilde{\sigma}$, there exists $t_0\in[T_n,T_n+m]$ such that $r_n=\rho'(t_0)$ and therefore
\begin{align*}
\frac{\rho'(T_n+s)}{r_n}&=\frac{\tilde{\sigma}^2(T_n+s)}{\tilde{\sigma}^2(t_0)}\, e^{-2\int_{t_0}^{T_n+s}\alpha(u)\,du}\ge \frac{\tilde{\sigma}^2(T_n+s)}{\tilde{\sigma}^2(t_0)}\, e^{-2|T_n+s-t_0|\alpha_n}\\
&\ge \frac{\tilde{\sigma}^2(T_n+s)}{\tilde{\sigma}^2(t_0)}\, e^{-2m\alpha_n}.
\end{align*}
Since $\tilde{\sigma}$ satisfies Assumption \ref{assump2}, we obtain the following lower bound by integrating with respect to the variable $s$, 
\begin{align*}
\frac{\min(1,\kappa_-)^2}{r_n\Delta_m^2}&\ge \frac{m \chi_m}{(b-a)^2}e^{-2m\alpha_n}. 
\end{align*}
Denoting $\zeta_{n+1}$ the minimum of the two quantities previously computed, we define recursively the sequence $G(n)$ by
\begin{equation*}
G(n+1) - G(n) = \zeta_{n+1},\quad \forall n \geqslant 0 ,\quad \mbox{and}\ G(0)=0.
\end{equation*}
The sum of these increments leads to

\begin{equation*}
\sum_{i=0}^{n-1} G(i+1) - G(i) =\sum_{i=1}^{n} \zeta_{i} = G(n) - G(0) = G(n).
\end{equation*}
For any parameter $\tilde{q}>q$, Assumption \ref{assump1} implies the existence of a constant $\tilde{C}>0$ independent of $\epsilon$ such that
\begin{equation}
G(n) \geqslant \frac{1}{\tilde{C}}\sum_{k=1}^n \frac{1}{k^{\tilde{q}}}\geqslant\frac{1}{\tilde{C}(1-\tilde{q})}(n^{1-\tilde{q}} - 1),\quad \forall n\ge 1.
\label{minG}
\end{equation}
Moreover the particular choice of the function $G$ permits to obtain $\mathcal{D}_n\le 0$ for all $n$. Consequently $Z_n=H(n,X_n)+G(n)$ is a super-martingale. A generalization of Proposition \ref{potential} permits to obtain the upper bound
\begin{equation}
\label{minmeanG}
\mathbb{E}[G(N_\epsilon)]\le H(0,x_0)=V\circ F(0,x_0)=V(x_0).
\end{equation}
Combining \eqref{minG}, \eqref{minmeanG} and the definition of the function $V$ in \eqref{defV} leads to
\begin{equation*}
\mathbb{E}[N_{\epsilon}^{1-\tilde{q}}] \leqslant \tilde{C}(1-\tilde{q})\log\left( \frac{(x_0-a)(b-x_0)}{\gamma\epsilon(b-a-\gamma\epsilon)} \right)+1.
\end{equation*}
This bound corresponds to the announced result. In order to conclude the proof, we just need to precise that $N_\epsilon$ is a.s. finite, see Lemma \ref{Nfinite}. Such a condition is required to apply the generalization of Proposition \ref{potential}.
\end{proof}

\begin{lemma}
\label{Nfinite}
 The stopping procedure $N_\epsilon$ of {\sc Algorithm}$_{m}$ is a.s. finite. Moreover the outcome of the algorithm $\mathcal{T}_\epsilon$ is stochastically upper bounded by $\mathcal{T}$, the diffusion first exit time.
\end{lemma}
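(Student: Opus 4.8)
The plan is to establish two separate claims. First, that $N_\epsilon<\infty$ almost surely; second, that $\mathcal{T}_\epsilon\preceq\mathcal{T}$ in the stochastic order. For the second claim I would argue pathwise: by construction each couple $(T_n,X_n)$ issued from the algorithm lies on the graph of a genuine trajectory of the diffusion, since each generalized spheroid is, by Proposition~\ref{spheroidL}, an exit domain for the diffusion path started at $(T_n,X_n)$, and the spheroids are, by Proposition~\ref{parameterL}, contained in $\mathbb{R}_+\times[a,b]$. Hence the algorithm constructs (on a suitable probability space, using the strong Markov property to concatenate) a diffusion path $(X_t)$ started at $x_0$ together with the skeleton $(T_n,X_n)$ along it. The algorithm stops at the first step $N_\epsilon$ for which $X_{N_\epsilon}\notin[a+\epsilon,b-\epsilon]$, and $\mathcal{T}_\epsilon=T_{N_\epsilon}$. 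Since along this path $X_{T_n}\in[a,b]$ for all $n\le N_\epsilon$, the true exit time $\mathcal{T}$ of $[a,b]$ has not yet occurred strictly before $T_{N_\epsilon}$; more precisely $T_{N_\epsilon}\le\mathcal{T}$ on this coupling, because $X$ remains in $[a,b]$ on $[0,T_{N_\epsilon}]$ (each spheroid is contained in the strip, so the path cannot have left $[a,b]$ between skeleton points). This gives $\mathcal{T}_\epsilon\le\mathcal{T}$ almost surely on the coupling, hence $\mathbb{P}(\mathcal{T}_\epsilon>t)\le\mathbb{P}(\mathcal{T}>t)$ for all $t$, which is the stochastic bound.

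For finiteness of $N_\epsilon$, I would argue by contradiction: on the event $\{N_\epsilon=\infty\}$ the sequence $(X_n)$ stays in the open strip $(a+\epsilon,b-\epsilon)$ forever. On this event each $X_n\in[a+\epsilon,b-\epsilon]$, so the spheroid parameter $d_n=d(T_n,X_n)$ is bounded below by a strictly positive constant $d_{\min}>0$ depending only on $\epsilon$, $a$, $b$, $\gamma$ and the (locally bounded) coefficients on the relevant time range — this uses the explicit formula \eqref{paraml}, noting $X_n-a\ge\epsilon$ and $b-X_n\ge\epsilon$, together with $\Delta_m$ being finite. Correspondingly $\tau^L_{n+1}$ has a law bounded below (in distribution) by that of $\rho^{-1}(\rho(T_n)+d_{\min}^2\tau)-T_n$, where $\tau$ is the Brownian exit time from the unit spheroid; in particular, conditionally on $\mathcal{F}_{T_n}$, the increment $\tau^L_{n+1}$ stochastically dominates a positive random variable whose law does not degenerate. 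Hence $T_n=\sum_{k=1}^n\tau^L_k\to\infty$ a.s. on $\{N_\epsilon=\infty\}$. But then the underlying diffusion path $(X_t)$ produced by the coupling would satisfy $X_{T_n}\in(a+\epsilon,b-\epsilon)$ along a sequence $T_n\uparrow\infty$, which is not itself a contradiction; so I would instead use the supermartingale machinery already set up in the proof of Theorem~\ref{meanL}: the process $Z_n=H(X_n)+G(n)$ with $G$ nondecreasing and $G(n)\to\infty$ is a nonnegative supermartingale on $\{n\le N_\epsilon\}$, hence $\mathbb{E}[G(n\wedge N_\epsilon)]\le H(0,x_0)=V(x_0)<\infty$ for every $n$; letting $n\to\infty$ and using $G(n)\uparrow\infty$ forces $\mathbb{P}(N_\epsilon=\infty)=0$ by Fatou. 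This is the cleanest route and reuses \eqref{minmeanG}.

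I expect the main obstacle to be making the coupling precise: the algorithm, as stated, only samples exit times and Bernoulli-chosen exit positions, so one must check that there genuinely exists a diffusion process $(X_t)$ whose graph passes through every $(T_n,X_n)$ with the correct conditional laws. This follows from Proposition~\ref{spheroidL} (the generalized spheroid is an exit domain with the right exit-time law, and the exit position is $\psi^L_\pm$ with the correct $\tfrac12$--$\tfrac12$ split by symmetry of the Brownian spheroid exit location) combined with the strong Markov property and Remark~\ref{shiftL}, which allows restarting the diffusion at $(T_n,X_n)$. Once this construction is in place, both assertions are immediate; the only mild subtlety is that $X_t$ stays in $[a,b]$ on $[0,T_{N_\epsilon}]$, which is exactly the content of the spheroid-containment Proposition~\ref{parameterL}. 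For the finiteness part, the secondary point to watch is that $G(n)\to\infty$: this is guaranteed by \eqref{minG} since $\tilde q<1$, so $\sum k^{-\tilde q}$ diverges.
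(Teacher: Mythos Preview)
Your coupling argument for the stochastic bound $\mathcal{T}_\epsilon\preceq\mathcal{T}$ is exactly the paper's: build a diffusion trajectory and a skeleton $(t_n,z_n)$ on it with the same law as $(T_n,X_n)$, and observe that each $t_n\le\mathcal{T}$ because the spheroids are contained in $\mathbb{R}_+\times[a,b]$.

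For finiteness, however, you take a different route from the paper, and in doing so you overlook that your \emph{first} attempt already contains the paper's argument. You establish (i) on $\{N_\epsilon=\infty\}$ the skeleton times $T_n\to\infty$, and (ii) on the coupling $t_n\le\mathcal{T}$ for all $n$. Since $\mathcal{T}<\infty$ a.s., (i) and (ii) are directly contradictory on $\{\tilde N_\epsilon=\infty\}$; that is the paper's proof. Your remark that ``$X_{T_n}\in(a+\epsilon,b-\epsilon)$ along $T_n\uparrow\infty$ is not itself a contradiction'' is true in isolation, but you already have the pathwise bound $t_n\le\mathcal{T}$ in hand from the first paragraph. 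One technical caveat in your version of (i): the lower bound $d_n\ge d_{\min}$ cannot be uniform in $n$ because $\Delta_m$ depends on $T_n$ and may grow; the paper fixes an arbitrary horizon $t_0$, uses $\underline d(t_0)=\inf_{x\in[a+\epsilon,b-\epsilon],\,t\le t_0}d(x,t)>0$, and shows $\mathbb{P}(T_n\le t_0)\to 0$ for each $t_0$.

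Your supermartingale-plus-Fatou argument is correct and is a genuine alternative, but note the logical ordering: in the paper, the very application of Proposition~\ref{potential} (and hence \eqref{minmeanG}) in the proof of Theorem~\ref{meanL} \emph{requires} $N_\epsilon<\infty$ a.s., which is why Lemma~\ref{Nfinite} is proved independently. Your route avoids circularity because you only use the supermartingale inequality $\mathbb{E}[G(n\wedge N_\epsilon)]\le V(x_0)$ for each fixed $n$ (which does not need $N_\epsilon<\infty$) and then pass to the limit via Fatou; but you should not cite \eqref{minmeanG} itself, since that bound is stated for $n=N_\epsilon$ and presupposes the lemma. Also, ``$H$ nonnegative'' is slightly too strong: what you actually have (via $\partial_x F\ge\kappa$ and $X_n\in[a+\gamma\epsilon,b-\gamma\epsilon]$ for $n\le N_\epsilon$) is that $H(T_n,X_n)$ is bounded below by a finite constant, which is all Fatou needs.
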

\begin{proof}
\textit{Step 1.} We emphasize a link between a sample of a L-class diffusion process and the Markov chain generated by the algorithm, denoted $((T_n, X_n))_{n\in \mathbb{N}}$ with $(T_0, X_0) = (0,0)$.\\
Let us consider a sample of a L-class diffusion. At the starting point of this path, we create a spheroid of maximal size which belongs to the set $[a,b] \times \mathbb{R}_+$. The first intersection point of this spheroid and the path gives us a first point $(t_1,z_1)$. This construction implies that $(t_1,z_1)$ and $(T_1,X_1)$ are identically distributed. Then considering $(t_1,z_1)$ as a new starting point we construct a spheroid of maximal size and denote by $(t_2,z_2)$ the first intersection point between this new spheroid and the diffusion path starting in $(t_1,z_1)$. Once again we get by construction that  $(t_2,z_2)$ and $(T_2,X_2)$ are identically distributed. We build step by step a sequence $((t_n, z_n))_{n\in \mathbb{N}}$ of intersections between the considered sample and the spheroids in such a way that the sequences $((t_n,z_n))_{n\ge 0}$ and $((T_n,X_n))_{n\ge 0}$ are identically distributed.\\
If we introduce $N_{\epsilon}$ the stopping time appearing in the stopping procedure of the algorithm  and $\tilde{N}_{\epsilon} = \inf\{n \in \mathbb{N}, z_n \notin [a + \epsilon, b - \epsilon]\}$, the identity in law of those random variables holds. By construction, $t_n \leqslant T$ for all $n \in \mathbb{N}$, where $T$ stands for the diffusion first exit time from the interval $[a,b]$. This inequality remains true when $t_n$ is replaced by the random stopping time $t_{\tilde{N}_\epsilon}$.\\
Since $t_{\tilde{N}_\epsilon}$ and $t_{N_\epsilon}$ are identically distributed, we deduce that the outcome  of {\sc Algorithm}$_m$ is stochastically smaller than $T$.\\ 
\textit{Step 2.}
We prove now that $N_\epsilon$ is a.s. finite. Using \eqref{defTn} and \eqref{reltempsL} we obtain

\begin{equation*}
T_n = \rho^{-1}_L(d_1^2 \tau_1 + d_2^2 \tau_2 +\ldots+ d_n^2 \tau_n),   
\end{equation*}
where $(\tau_k)_{k\geqslant 1}$ is a sequence of independent Brownian exit times from the unit spheroid and $d_k$ represents the size of the spheroid \eqref{paraml} starting in $(T_k, X_k)$ and included in $[a,b]$. Let $t_0 > 0$. Then
\begin{align*}
\mathbb{P}(T_n \leqslant t_0) &= \mathbb{P}(d_1^2 \tau_1 + d_2^2 \tau_2 +\ldots+ d_n^2 \tau_n \leqslant \rho^{-1}_L(t_0))\\
&\leqslant \mathbb{P}\left(\tau_1 + \tau_2 +\ldots + \tau_n \leqslant \frac{\rho^{-1}_L(t_0)}{\underline{d}(t_0)} \right),
\end{align*}
where $\underline{d}(t_0)$ is defined by 
\begin{equation*}
\underline{d}(t_0) = \inf\limits_{x \in [a+\epsilon, b - \epsilon],\ t \leqslant t_0} d(x,t) >0.
\end{equation*}
Since $\displaystyle{\sum_{k+1}^n \tau_k}$ tends to $+\infty$ a.s.,
\begin{equation*}
\lim\limits_{n \rightarrow + \infty} \mathbb{P}(T_n \leqslant t_0) = \mathbb{P}(T_{\infty} \leqslant t_0) = 0, \quad \forall t_0 > 0.
\end{equation*}
We deduce that $\lim_{n \rightarrow +\infty} T_n = + \infty$ a.s. Combining this limiting result to the first step of the proof, that is $T_n \overset{(d)}{\leqslant} T$, implies: $N_\epsilon < +\infty$ a.s.
\end{proof}

\subsubsection{Rate of convergence}

The second important result in the study of the algorithm is the description of the  rate of convergence. It is of prime interest to known how close the outcome of the algorithm and the exit time of the L-class diffusion are. The convergence result is essentially based on the strong relation between the Brownian motion and the L-class diffusion.

\begin{thm}
\label{errorL}
Let us denote by $\overline{\alpha}_t$ (respectively $\overline{\beta}_t$) the maximal value of the function $|\alpha|$ (resp. $|\beta|$) on the interval $[0,t]$. We also introduce $F$ the cumulative distribution
function of the L-class diffusion exit time from the interval $[a, b]$ and $F_\epsilon$ the distribution function of the algorithm
outcome. Then, for any $t\ge 0$ and any $\rho>1$ there exists $\epsilon_0>0$ such that 

\begin{equation}
\left( 1-\rho\sqrt{\epsilon}\ \frac{1+\overline{\beta}_t}{\underline{\sigma}} \right)F_{\epsilon}(t -\epsilon) \leqslant F(t) \leqslant F_{\epsilon}(t),\quad\forall \epsilon\le \epsilon_0,
\end{equation}
the constant $\underline{\sigma}$ being defined in \eqref{minsigma}. Moreover this convergence is uniform on each compact subset of the time axis.
\end{thm}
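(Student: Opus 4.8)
The plan is to establish the two bounds separately, the right-hand one being an immediate consequence of Lemma~\ref{Nfinite} and the left-hand one requiring a short-time control of the diffusion near the boundary. Throughout I would use the coupling introduced in the proof of Lemma~\ref{Nfinite}: on a common probability space one realizes simultaneously a genuine path of the L-class diffusion $X$ and the chain $(T_n,X_n)_{n\ge 0}$ of {\sc Algorithm}$_m$ so that every point $(T_n,X_n)$ lies on the graph of $X$ and, since all the spheroids are contained in $[a,b]$, so that $\mathcal{T}\ge\mathcal{T}_\epsilon$ for the corresponding first exit time $\mathcal{T}$. The inequality $F(t)\le F_\epsilon(t)$ is then nothing but the stochastic domination $\mathcal{T}_\epsilon\le_{\mathrm{st}}\mathcal{T}$ asserted in Lemma~\ref{Nfinite}.

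For the reverse bound, observe that at the stopping step the algorithm position satisfies $X_{N_\epsilon}-a\le\epsilon$ or $b-X_{N_\epsilon}\le\epsilon$, while $\mathcal{T}\ge\mathcal{T}_\epsilon$. Hence, on the event $\{\mathcal{T}_\epsilon\le t-\epsilon\}\cap\{\mathcal{T}-\mathcal{T}_\epsilon\le\epsilon\}$ one has $\mathcal{T}\le t$, so that
\begin{equation*}
F(t)\ \ge\ \mathbb{E}\Big[\mathbf{1}_{\{\mathcal{T}_\epsilon\le t-\epsilon\}}\,\mathbb{P}\big(\mathcal{T}-\mathcal{T}_\epsilon\le\epsilon\ \big|\ \mathcal{F}_{\mathcal{T}_\epsilon}\big)\Big].
\end{equation*}
Everything then reduces to proving the uniform estimate: there exists $\epsilon_0>0$ such that, for $\epsilon\le\epsilon_0$ and almost surely on $\{\mathcal{T}_\epsilon\le t\}$,
\begin{equation*}
\mathbb{P}\big(\mathcal{T}-\mathcal{T}_\epsilon\le\epsilon\ \big|\ \mathcal{F}_{\mathcal{T}_\epsilon}\big)\ \ge\ 1-\rho\sqrt{\epsilon}\,\frac{1+\overline{\beta}_t}{\underline{\sigma}},
\end{equation*}
which, plugged into the previous display, yields $(1-\rho\sqrt{\epsilon}(1+\overline{\beta}_t)/\underline{\sigma})\,F_\epsilon(t-\epsilon)\le F(t)$.

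To prove this key estimate I would freeze $\mathcal{T}_\epsilon=\phi\le t$ and, say, $X_{\mathcal{T}_\epsilon}=a+y$ with $0\le y\le\epsilon$ (the neighbourhood of $b$ being symmetric). By the strong Markov property the shifted process $\xi_s:=X_{\phi+s}$ solves the L-class equation with coefficients shifted by $\phi$ and $\xi_0=a+y$, and $\mathcal{T}-\mathcal{T}_\epsilon$ is its exit time from $[a,b]$, which is bounded above by its first passage time at $a$. Applying to $\xi$ the deterministic time change $\gamma_\phi$ defined by $\int_0^{\gamma_\phi(s)}\tilde{\sigma}^2(\phi+u)\,du=s$ (the device introduced just before Proposition~\ref{comparison}) together with the comparison inequality of Remark~\ref{inversecomparison}, the time-changed process stays below a Brownian motion started at $a+y$ with a constant drift $\mu$ whose modulus is controlled by $\overline{\alpha}_t$, $\overline{\beta}_t$, $\underline{\sigma}$ and $\max(|a|,|b|)$; moreover $\gamma_\phi^{-1}(\epsilon)=\int_0^\epsilon\tilde{\sigma}^2(\phi+u)\,du\ge\underline{\sigma}^2\epsilon$. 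Consequently the conditional probability that $\xi$ fails to leave $[a,b]$ before time $\epsilon$ is bounded above by the probability that the dominating drifted Brownian motion stays strictly above $a$ up to time $\underline{\sigma}^2\epsilon$, a quantity given explicitly by the reflection principle. A first-order expansion of that explicit formula in $y\le\epsilon$ produces a leading term of order $\sqrt{\epsilon}/\underline{\sigma}$ (with an absolute constant smaller than $1$), the drift contributing only an $\mathcal{O}(\mu\,\epsilon)$ correction; choosing $\epsilon_0$ small enough absorbs the correction and all numerical constants into the free factor $\rho>1$, which gives the announced bound. Its uniformity over $\phi\le t$ follows since $\overline{\alpha}_\cdot$, $\overline{\beta}_\cdot$ and the time change are monotone in the upper time limit.

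Finally, replacing $t$ by $t+\epsilon$ in the left-hand inequality gives $F(t)\le F_\epsilon(t)\le \frac{F(t+\epsilon)}{1-\rho\sqrt{\epsilon}(1+\overline{\beta}_{t+\epsilon})/\underline{\sigma}}$; together with the continuity of the exit-time distribution function $F$ this sandwich forces $F_\epsilon\to F$ uniformly on every compact time interval. I expect the main obstacle to be precisely the short-time boundary estimate above, namely making the comparison constants uniform in the random stopping time $\phi\le t$ and over the two boundaries, and squeezing the resulting explicit reflection-principle expression into the clean form $\rho\sqrt{\epsilon}\,(1+\overline{\beta}_t)/\underline{\sigma}$; once this is in place, the rest of the argument is a routine combination of the stochastic domination of Lemma~\ref{Nfinite}, the strong Markov property and the comparison theorem already at our disposal.
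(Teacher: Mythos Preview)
Your overall architecture matches the paper's exactly: the coupling from Lemma~\ref{Nfinite} gives $F(t)\le F_\epsilon(t)$, and for the other direction you decompose $\{\mathcal T>t\}$ according to $\{\mathcal T_\epsilon\le t-\epsilon\}$, invoke the strong Markov property at $\mathcal T_\epsilon$, and reduce to a uniform short-time estimate of the type $\sup_{(y,s)}\mathbb P_{(y,s)}(\mathcal T>\epsilon)=O(\sqrt\epsilon)$, finished off with the reflection principle. The paper takes $\delta=\epsilon$ in precisely the same decomposition.

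The one substantive divergence is in how the short-time boundary estimate is obtained. You propose the time change $\gamma_\phi$ that reduces the diffusion coefficient to $1$, followed by the comparison of Remark~\ref{inversecomparison} with a drifted Brownian motion. This works and yields an $O(\sqrt\epsilon)$ bound, but the resulting drift $\mu$ carries $\overline\alpha_t$, $\max(|a|,|b|)$ and $\underline\sigma$ into the leading constant, and you correctly flag that squeezing it into the stated form $\rho\sqrt\epsilon\,(1+\overline\beta_t)/\underline\sigma$ is the sticking point; in fact the comparison route does not seem to deliver that exact constant. The paper avoids this by exploiting the explicit linear structure instead of comparison: using Remark~\ref{shiftL} (the shifted version of $X_t=f_L(t,x_0+W_{\rho(t)})$), the event $\{\mathcal T_b>\delta\}$ is rewritten \emph{exactly} as an inequality on $\sup_{r\le\delta}W_{\rho_s(r)}$ with only $e^{O(\overline\alpha_t\delta)}$ prefactors and an additive $\overline\beta_t\delta$. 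After normalising by $\sqrt{\rho_s(\delta)}\ge\underline\sigma\sqrt{(1-e^{-2\overline\alpha_t\delta})/(2\overline\alpha_t)}$ and applying the reflection principle, one gets
\[
\mathbb P_{(y,s)}(\mathcal T>\delta)\ \le\ \sqrt{\tfrac{2}{\pi}}\,e^{3\overline\alpha_t\delta}\,\frac{\epsilon+\overline\beta_t\delta}{\underline\sigma\sqrt\delta},
\]
and the choice $\delta=\epsilon$ makes the $\overline\alpha_t$-dependence disappear into the factor $\rho>1$ for $\epsilon$ small, leaving exactly $(1+\overline\beta_t)/\underline\sigma$. So your plan proves the convergence, but to match the stated constant you should replace the comparison step by the direct use of the representation \eqref{eqf}.
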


\begin{proof}
As in Lemma \ref{Nfinite}, we build step by step a sequence $((t_n, z_n))_{n\in \mathbb{N}}$ of intersections between the path of the L-class diffusion process and the spheroids in such a way that the sequences $((t_n,z_n))_{n\ge 0}$ and $((T_n,X_n))_{n\ge 0}$ are identically distributed.\\
If we introduce $N_{\epsilon}$ the stopping time appearing in the stopping procedure of the algorithm  and $\tilde{N}_{\epsilon} = \inf\{n \in \mathbb{N}, z_n \notin [a + \epsilon, b - \epsilon]\}$, the identity in law of those random variables holds. By construction, $t_n \leqslant \mathcal{T}$ for all $n \in \mathbb{N}$, where $\mathcal{T}$ stands for the diffusion first exit time from the interval $[a,b]$. This inequality remains true when $t_n$ is replaced by the random stopping time $t_{\tilde{N}_\epsilon}$. Hence
\begin{align}
1 - F(t) &= \mathbb{P}(\mathcal{T}>t) =\mathbb{P}(\mathcal{T}>t, t_{\tilde{N}_{\epsilon}}\leqslant t -\delta) + \mathbb{P}(\mathcal{T}>t, t_{\tilde{N}_{\epsilon}}> t -\delta)\nonumber\\
&\leqslant \mathbb{P}(\mathcal{T}>t, t_{\tilde{N}_{\epsilon}}\leqslant t -\delta) +1 - F_{\epsilon}(t- \delta),\quad \forall t\ge 0.
\label{majLerror}
\end{align}
We focus our attention on the first term of the r.h.s. Using the strong Markov property, we obtain
\begin{equation}
\mathbb{P}(\mathcal{T}>t, t_{\tilde{N}_{\epsilon}}\leqslant t -\delta) \leqslant F_{\epsilon}(t-\delta) \sup\limits_{(y,s) \in ([a,a+ \epsilon] \cup [b-\epsilon,b]) \times [0, t-\delta]} \mathbb{P}_{(y,\tau)}(\mathcal{T}>\delta).
\label{markovbis}
\end{equation}
Let us consider the case $y\in[b-\epsilon,b]$ (the study of the other case $y\in[a,a+\epsilon]$ is left to the reader since it suffices by symmetry to use exactly the same arguments). We first note that, for any $y\in[b-\epsilon,b]$,
\[
\mathbb{P}_{(y,s)}(\mathcal{T}>\delta)\le \mathbb{P}_{(y,s)}(\mathcal{T}_b>\delta)\le \mathbb{P}_{(b-\epsilon,s)}(\mathcal{T}_b>\delta),
\]  
where $\mathcal{T}_b$ stands for the first passage time through the level $b$. Let us introduce several notations: we denote the translated function $\alpha_s(t):=\alpha(s+t)$ (similar definitions for $\tilde{\sigma}_s$, $\beta_s$ and $\rho_s$ are defined by using the translated functions in \eqref{defrho}). 
The diffusion process on the time interval $[s,s+\delta]$ can be expressed using these translated functions. The condition $\mathcal{T}_b>\delta$ is equivalent to $\sup_{0\le r\le \delta} X_{s+r}<b$ and becomes, for all $r\le \delta$,
\begin{equation}
b-\epsilon+e^{2\int_0^r\alpha_s(u)\,du}W_{\rho_s(r)}+e^{\int_0^r\alpha_s(u)\,du}\int_0^r \beta_s(u)e^{-\int_0^u\alpha_s(w)\,dw}\,du < b.
\label{berror}
\end{equation}
Since $s\in[0,t-\delta]$ and $r\le \delta$, we obtain the following bound:
\[
\rho_s(\delta)\ge \underline{\sigma}^2\,\frac{1-e^{-2\overline{\alpha}_t \delta }}{2\overline{\alpha}_t}.
\]
The inequality \eqref{berror} implies
\[
\frac{1}{\sqrt{\rho_s(\delta)}} \sup_{0\le r\le \delta}W_{\rho_s(r)}\le \frac{e^{2\overline{\alpha}_t \delta}}{\sqrt{1-e^{-2\overline{\alpha}_t \delta}}} \frac{\sqrt{2\overline{\alpha}_t}}{\underline{\sigma}}(\epsilon+\overline{\beta}_t \delta)\le e^{3\overline{\alpha}_t \delta}\,\frac{\epsilon+\overline{\beta}_t\delta}{\underline{\sigma}\sqrt{\delta}}.
\]
The D\'esir\'e-Andr\'e reflexion principle for the Brownian motion implies that the l.h.s of the previous inequality has the same distribution than the absolute value of a standard gaussian random variable: $|G|$. Hence, for any $y\in[b-\epsilon,b]$ and for any $s\le t-\delta$:
\begin{equation}\label{majP}
\mathbb{P}(\mathcal{T}_b>\delta)\le \mathbb{P}\left(|G|\le  e^{3\overline{\alpha}_t \delta}\,\frac{\epsilon+\overline{\beta}_t\delta}{\underline{\sigma}\sqrt{\delta}} \right)\le \sqrt{\frac{2}{\pi}}\ e^{3\overline{\alpha}_t \delta}\,\frac{\epsilon+\overline{\beta}_t\delta}{\underline{\sigma}\sqrt{\delta}}.
\end{equation}
It suffices to choose $\delta=\epsilon$ in the previous inequality and to combine with \eqref{majLerror} in order to prove the statement of the theorem.
\end{proof}

\section{WOMS algorithm for G-class diffusions}

In this section we present an application of the results obtained so far to another family of diffusion processes: the growth processes (G-class). We shall just point out the existence of a strong link between linear and growth diffusions. 
\begin{definition}(G-class diffusions)
We call G-class diffusion any solution of
\begin{equation}
dX_t = (\alpha(t)X_t + \beta(t)X_t \log(X_t))dt + \tilde{\sigma}(t)dW_t,\quad X_0=x_0,
\label{edsg}
\end{equation}
where $\alpha$ and $\beta$ are real continuous functions and $\tilde{\sigma}$ is a continuous non-negative function.
\end{definition}

We first notice that this kind of process is non negative due to the logarithm function.
As for the $L$-class diffusions case, it is possible to emphasize an explicit expression of the solution of \eqref{edsg}. Here, the desired form is:
\begin{equation}
X_t = x_0\ G(t,W_{\gamma(t)}), \quad \forall t \geqslant 0.
\label{Gexplicit}
\end{equation}
The function $G$ is described in the following statement.
\begin{proposition}
The solution of the SDE \eqref{edsg} is given by \eqref{Gexplicit} with 
\begin{align*}
G(t,x) &= C(t) e^{\frac{\tilde{\sigma}(t)}{\sqrt{\gamma'(t)}}x}\\ 
&\text{ with } C(t)= \exp\left(e^{\int^t_0 \beta(s)ds}\int^t_0 \Big(\alpha(s) - \frac{1}{2}\tilde{\sigma}(s)^2\Big)e^{-\int_0^s \beta(u)du}ds\right)\\
&\text{ and }  \gamma(t) = \int^t_0 \tilde{\sigma}(s)^2e^{-2\int_0^s \beta(u)du} ds.
\end{align*}

\end{proposition}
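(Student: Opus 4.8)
\medskip
\noindent\textbf{Proof plan.} The natural strategy is to reduce the G-class equation \eqref{edsg} to the already solved L-class equation \eqref{edsl} by the logarithmic change of variable, and then invoke Lemma \ref{frontL}. Since the process solving \eqref{edsg} stays strictly positive (this is exactly where the logarithmic drift is used), set $Y_t := \log X_t$. Applying It\^o's formula to $x\mapsto \log x$ turns \eqref{edsg} into a \emph{linear} equation for $Y$: it is of the form \eqref{edsl} in which the first coefficient $\alpha(\cdot)$ of \eqref{edsl} is replaced by $\beta(\cdot)$, the second coefficient $\beta(\cdot)$ of \eqref{edsl} is replaced by $\alpha(\cdot)-\tfrac12\tilde\sigma(\cdot)^2$, the diffusion coefficient $\tilde\sigma$ is unchanged, and the initial condition is $Y_0=\log x_0$. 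Consequently the function $\theta$ of \eqref{theta} attached to this linear equation is $t\mapsto -\int_0^t\beta(s)\,ds$, so the time change $\rho$ of \eqref{defrho} is precisely the function $\gamma$ appearing in the statement, and the intercept $c$ of Lemma \ref{frontL} is precisely $\log C$.

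Granting this, Lemma \ref{frontL} yields $Y_t = f_L(t,\log x_0 + W_{\gamma(t)})$ with $f_L$ affine in its second variable, of slope $\tilde\sigma(t)/\sqrt{\gamma'(t)}$ and intercept $\log C(t)$. Exponentiating turns this affine-in-$W$ identity into the exponential--affine form \eqref{Gexplicit}: one reads off $G(t,x)=C(t)\exp\!\big(\tilde\sigma(t)x/\sqrt{\gamma'(t)}\big)$ and collects the contribution of $\log x_0$ into the prefactor, using that $\sqrt{\gamma'(t)}=\tilde\sigma(t)\exp\!\big(-\int_0^t\beta(s)\,ds\big)$. Alternatively, and in the spirit of the proof of Lemma \ref{frontL}, one may argue directly: introduce $M_t:=\int_0^t\sqrt{\gamma'(s)}\,dW_s$, a martingale with $\langle M\rangle_t=\gamma(t)$, set $\widehat X_t := x_0\,G(t,M_t)$, check by It\^o's formula that $\widehat X$ solves \eqref{edsg} with $\widehat X_0=x_0$, and then use the Dambis--Dubins--Schwarz theorem to write $M_t=B_{\gamma(t)}$ for some Brownian motion $B$, together with weak uniqueness for \eqref{edsg}, to conclude $X_t=x_0\,G(t,W_{\gamma(t)})$ in law.

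The computations are routine; the only points requiring care are, first, the justification that $X$ (equivalently its candidate expression) is strictly positive so that $\log X$ is well defined, and second, the algebraic bookkeeping of the exponents $e^{\pm\int_0^t\beta}$ and of the dependence on the initial condition $x_0$. In the direct It\^o verification the crucial elementary identity is $\gamma'(t)\,\big(\tilde\sigma(t)/\sqrt{\gamma'(t)}\big)^2=\tilde\sigma(t)^2$: it makes the second-order It\^o term combine with the $-\tfrac12\tilde\sigma^2$ hidden inside $(\log C)'(t)=\beta(t)\log C(t)+\alpha(t)-\tfrac12\tilde\sigma(t)^2$ to reproduce both the $\tfrac12\tilde\sigma^2$ correction and the $\alpha(t)X_t$ part of the drift, while the $t$-derivative of the slope produces exactly the nonlinear $\beta(t)X_t\log X_t$ term. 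No estimate or compactness argument is needed; everything reduces to these one-line computations once the identifications $\rho=\gamma$ and $c=\log C$ are in hand.
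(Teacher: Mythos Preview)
Your approach is exactly the one the paper takes: the paper's proof consists of the single sentence ``This statement is an immediate consequence of the link built between the linear and the growth diffusions,'' followed by the proposition (proved by It\^o's formula) that $Y_t=e^{X_t}$ carries an L-class diffusion to a G-class one, which is precisely your logarithmic reduction to Lemma~\ref{frontL} with the identifications $\alpha_L=\beta$, $\beta_L=\alpha-\tfrac12\tilde\sigma^2$, $\sigma_L=\tilde\sigma$, $\rho=\gamma$, $c=\log C$. Your alternative direct verification via $M_t=\int_0^t\sqrt{\gamma'(s)}\,dW_s$ and Dambis--Dubins--Schwarz simply replays the proof of Lemma~\ref{frontL} in the exponential coordinates, so it is not a genuinely different route.

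One small point you gloss over: when you ``collect the contribution of $\log x_0$ into the prefactor,'' the slope $\tilde\sigma(t)/\sqrt{\gamma'(t)}=e^{\int_0^t\beta(s)\,ds}$ is not identically $1$, so the factor that actually emerges from $f_L(t,\log x_0+W_{\gamma(t)})$ is $x_0^{\,\exp(\int_0^t\beta)}$ rather than $x_0$; this is a wrinkle in the statement \eqref{Gexplicit} itself (at $t=0$ both agree), and the paper's one-line proof does not address it either.
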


This statement is an immediate consequence of the link built between the linear and the growth diffusions:

\begin{proposition}
If $X$ is solution of

\[
\left \{
\begin{array}{c @{=} l}
    dX_t &  (\alpha(t)X_t + \beta(t))dt + \sigma(t)dW_t\\
     X_0 & x_0 \\
\end{array}
\right.
\]
then $Y_t = e^{X_t}$ is solution of 

\begin{equation}
\left \{
\begin{array}{c @{=} l}
    dY_t &  (\tilde{\alpha}(t)Y_t + \tilde{\beta}(t)Y_t\log(Y_t))dt + \tilde{\sigma}(t)Y_tdW_t\\
     Y_0 & y_0 \\
\end{array}
\right.
\label{linkLG}
\end{equation}
with $\tilde{\alpha}(t) = \beta(t) + \frac{1}{2}\sigma(t)^2$, $\tilde{\beta}(t) = \alpha(t)$, $\tilde{\sigma}(t) = \sigma(t)$ and $y_0 = e^{x_0}$.
\end{proposition}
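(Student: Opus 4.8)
The plan is to verify the claim directly by applying Itô's formula to $Y_t = e^{X_t}$, where $X$ solves the given linear SDE. First I would write $\phi(x) = e^x$, so that $Y_t = \phi(X_t)$, with $\phi'(x) = \phi''(x) = e^x = Y_t$ evaluated along the path. Itô's formula then gives
\begin{equation*}
dY_t = e^{X_t}\,dX_t + \tfrac12 e^{X_t}\,d\langle X\rangle_t = Y_t\bigl((\alpha(t)X_t+\beta(t))\,dt + \sigma(t)\,dW_t\bigr) + \tfrac12 Y_t \sigma(t)^2\,dt.
\end{equation*}
Collecting the drift terms yields a $dt$-coefficient of $Y_t\bigl(\alpha(t)X_t + \beta(t) + \tfrac12\sigma(t)^2\bigr)$ and a $dW_t$-coefficient of $\sigma(t)Y_t$.

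The second step is purely algebraic: I must rewrite this in terms of $Y_t$ alone. Since $Y_t = e^{X_t}$, we have $X_t = \log(Y_t)$, so the drift becomes $Y_t\bigl(\alpha(t)\log(Y_t) + \beta(t) + \tfrac12\sigma(t)^2\bigr) = \bigl(\beta(t)+\tfrac12\sigma(t)^2\bigr)Y_t + \alpha(t)\,Y_t\log(Y_t)$. Matching against the target form $(\tilde{\alpha}(t)Y_t + \tilde{\beta}(t)Y_t\log(Y_t))\,dt + \tilde{\sigma}(t)Y_t\,dW_t$ forces exactly $\tilde{\alpha}(t) = \beta(t)+\tfrac12\sigma(t)^2$, $\tilde{\beta}(t) = \alpha(t)$, $\tilde{\sigma}(t) = \sigma(t)$, as stated. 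The initial condition is immediate: $Y_0 = e^{X_0} = e^{x_0} = y_0$.

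There is essentially no hard part here — the statement is a one-line Itô computation together with the substitution $X_t = \log Y_t$, which is legitimate because $Y_t = e^{X_t}>0$ almost surely for all $t$, so $\log(Y_t)$ is well defined along every path. The only point deserving a word of care is that the resulting SDE for $Y$ has coefficients with at most logarithmic growth (through the $Y\log Y$ term) and linear multiplicative noise, so it is of G-class type; since $Y$ is obtained as a deterministic smooth function of the unique solution $X$ of a linear SDE, existence and (pathwise) uniqueness for the $Y$-equation are inherited automatically, and no separate well-posedness argument is needed. I would close by noting that the preceding proposition giving the explicit form of $G$ follows by substituting the explicit representation of the L-class solution from Lemma~\ref{frontL} into $Y_t = e^{X_t}$ and simplifying.
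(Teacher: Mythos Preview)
Your proof is correct and follows essentially the same approach as the paper: apply It\^o's formula to $Y_t=e^{X_t}$, then substitute $X_t=\log(Y_t)$ and read off the coefficients $\tilde{\alpha},\tilde{\beta},\tilde{\sigma}$. The paper's argument is the identical one-line computation, without your additional remarks on well-posedness and positivity.
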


Hence, we manage to create a link between a solution of a $L$-class diffusion equation with $\alpha$, $\beta$, $\sigma$ and a solution of a $G$-class diffusion equation with $\tilde{\alpha}$, $\tilde{\beta}$, $\tilde{\sigma}$. 
\begin{proof}
To prove this statement, we apply Itô's formula

\begin{equation*}
Y_t = e^{X_t} = e^{X_0} + \int^t_0 e^{X_s} dX_s + \frac{1}{2}\int^t_0 e^{X_s} d\langle X,X \rangle_s
\end{equation*}
Hence, using the particular form of $X_t$ we obtain
\begin{align*}
Y_t &= Y_0 + \int^t_0 Y_s (\alpha(s)X_s + \beta(s))ds + \int^t_0\sigma(s)dB_s + \frac{1}{2}\int^t_0 Y_s \sigma(s)^2 ds\\
&= Y_0 + \int^t_0 (Y_s (\beta(s) + \frac{1}{2}\sigma(s)^2) + Y_s \log(Y_s)\alpha(s))ds + \int^t_0 Y_s \sigma(s) dB_s\\
&= Y_0 + \int^t_0 (Y_s \tilde{\alpha}(s) + Y_s \log(Y_s)\tilde{\beta}(s))dt + \int^t_0 Y_s \tilde{\sigma}(s)dB_s.
\end{align*}
%

\end{proof}

We consider the exit time from the interval $[a,b]$, $a,b \in \mathbb{R}^+_*$ for a $G$ class-diffusion. The previous link established permits to focus our attention on the exit time from the interval $[\log(a),\log(b)]$ for $L$-class diffusion processes with modified coefficients.\\

We present now an adaptation of the WOMS algorithm which permits to approximate the exit time for G-class diffusions. In such a context we aim to describe the procedure, the averaged number of steps and the convergence rate. \\
\emph{The procedure.} Let us consider $(X_t)_{t\ge 0}$ the unique solution of the stochastic differential equation \eqref{edsg}. In order to approximate the first diffusion exit time $\mathcal{T}$ of the interval $[a,b]$ we introduce the linear diffusion $(Y_t)$ solution of \eqref{linkLG}. 
Since the exit time of the growth process $(X_t)$ from the interval $[a,b]$ and the exit time of the linear diffusion $(Y_t)$ from the interval $[\log(a),\log(b)]$ are identically distributed, we use {\sc Algorithm}$_m$ with a parameter $\epsilon$ small enough, with boundaries $\log(a)$ and $\log(b)$. 
As a immediate consequence, Theorem \ref{meanL}  points out the logarithmic upper-bound of the average number of steps and Theorem \ref{errorL} emphasizes the convergence rate of the algorithm outcome.

\section{Numerical application}

In order to illustrate the efficiency of {\sc Algorithm}$_{m}$, we present numerical results associated to a particular linear diffusion.
Let us consider $(X_t)_{t \ge 0}$ the solution of \eqref{edsl} with 
\begin{equation*}
\label{sinL}
\alpha(t) = \frac{\cos(t)}{2 + \sin(t)}, \quad \beta(t) = \cos(t), \quad \tilde{\sigma}(t) = 2 + \sin(t).
\end{equation*}
Let us just notice that $\alpha$ satisfies $\alpha(t)= \frac{\tilde{\sigma}'(t)}{\tilde{\sigma}(t)}$,
 such a property simplifies the link between the diffusion process and a standard one-dimensional Brownian motion.
%
%
%
In particular, we obtain a simple expression of the time change appearing in \eqref{eqf}: $\rho(t)=4t$. Indeed \eqref{defrho} implies
\begin{align*}
\rho(t) 
&=\int^t_0 (2 + \sin(s))^2e^{-2\int^s_0 \frac{\cos(u)}{2 + \sin(u)}du}ds\\
&=\int^t_0 (2 + \sin(s))^2e^{-2(\log(2 + \sin(s))-\log(2))}ds= 4 t.
\end{align*}
Using Proposition \ref{spheroidL}, we determine the frontiers of the typical spheroid used in {\sc Algorithm}$_{m}$.

\begin{proposition}
If we denote by $\psi_{\pm}^L(t;t_0,X_{t_0})$ the spheroid starting in $(t_0,X_{t_0})$, we obtain
\begin{align}
\psi_{\pm}^L(t;t_0,X_{t_0}):= &\frac{2 + \sin(t+t_0)}{2 }\Big(\psi_{\pm}(4 t) + 2\log\Big(\frac{2 + \sin(t+t_0)}{2 + \sin(t_0)}\Big) \Big)\nonumber \\
&\quad + \Big(\frac{2 + \sin(t+t_0)}{2 + \sin(t_0)}\Big) X_{t_0}.
\label{frontsin}
\end{align}
and the exit time 
 $\tau^{t_0} = \inf\{t >0:\  X_t \notin [\psi^L_-(t;\,t_0,X_{t_0}),\psi^L_+(t;\,t_0,X_{t_0})]\}$ satisfies
\begin{equation}
\tau^{t_0} \overset{d}{=} \frac{1}{4}\tau
\end{equation}
where $\tau =\inf\{ t > 0:\ W_t \notin [\psi_-(t),\psi_+(t)]\}$.
\end{proposition}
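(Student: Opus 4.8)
The plan is to derive this proposition directly from the general formulas established earlier in the paper, namely Proposition \ref{spheroidL} together with the explicit computation of $\rho$. The key observation is that the specific choice of coefficients has already been shown to give $\rho(t) = 4t$, so the time change is linear and trivially invertible with $\rho^{-1}(u) = u/4$. All that remains is to substitute the explicit functions $\alpha$, $\beta$, $\tilde\sigma$ into the general spheroid boundaries of Proposition \ref{spheroidL} and simplify.

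First I would compute the auxiliary quantities appearing in Lemma \ref{frontL} and Proposition \ref{spheroidL}. Since $\alpha(t) = \tilde\sigma'(t)/\tilde\sigma(t)$ with $\tilde\sigma(t) = 2 + \sin t$, we get $\theta(t) = -\int_0^t \alpha(s)\,ds = -\log\bigl(\tfrac{2+\sin t}{2}\bigr)$, hence $e^{-\theta(t)} = \tfrac{2+\sin t}{2}$ and $e^{\int_{t_0}^{t+t_0}\alpha(s)\,ds} = \tfrac{2+\sin(t+t_0)}{2+\sin t_0}$. Next I would compute $c(t) = e^{-\theta(t)}\int_0^t \beta(s) e^{\theta(s)}\,ds = \tfrac{2+\sin t}{2}\int_0^t \cos(s)\cdot\tfrac{2}{2+\sin s}\,ds = \tfrac{2+\sin t}{2}\cdot 2\log\bigl(\tfrac{2+\sin t}{2}\bigr)$, using $\int_0^t \tfrac{\cos s}{2+\sin s}\,ds = \log\bigl(\tfrac{2+\sin t}{2}\bigr)$.

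Then I would plug these into the formula for $\psi_\pm^L(t;t_0,X_{t_0})$ from Proposition \ref{spheroidL}:
\begin{align*}
\psi_\pm^L(t;t_0,X_{t_0}) &= e^{-\theta(t+t_0)}\psi_\pm(\rho(t+t_0)-\rho(t_0)) + c(t+t_0) + (X_{t_0}-c(t_0))e^{\int_{t_0}^{t+t_0}\alpha(s)\,ds}.
\end{align*}
With $\rho(t+t_0)-\rho(t_0) = 4(t+t_0) - 4t_0 = 4t$, the first term becomes $\tfrac{2+\sin(t+t_0)}{2}\psi_\pm(4t)$. The term $c(t+t_0)$ contributes $\tfrac{2+\sin(t+t_0)}{2}\cdot 2\log\bigl(\tfrac{2+\sin(t+t_0)}{2}\bigr)$, while $-c(t_0)e^{\int_{t_0}^{t+t_0}\alpha}$ contributes $-\tfrac{2+\sin(t_0)}{2}\cdot 2\log\bigl(\tfrac{2+\sin t_0}{2}\bigr)\cdot\tfrac{2+\sin(t+t_0)}{2+\sin t_0} = -\tfrac{2+\sin(t+t_0)}{2}\cdot 2\log\bigl(\tfrac{2+\sin t_0}{2}\bigr)$. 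Combining these two logarithmic terms and using $\log x - \log y = \log(x/y)$ gives precisely $\tfrac{2+\sin(t+t_0)}{2}\cdot 2\log\bigl(\tfrac{2+\sin(t+t_0)}{2+\sin t_0}\bigr)$, matching \eqref{frontsin}. The remaining term $X_{t_0}e^{\int_{t_0}^{t+t_0}\alpha} = \tfrac{2+\sin(t+t_0)}{2+\sin t_0}X_{t_0}$ is exactly the last summand in \eqref{frontsin}.

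For the exit time, I would apply the relation \eqref{reltempsL} from Proposition \ref{spheroidL}, namely $\tau_L^{t_0} \overset{d}{=} \rho^{-1}(\tau + \rho(t_0)) - t_0$. Since $\rho(u) = 4u$ implies $\rho^{-1}(v) = v/4$, we get $\rho^{-1}(\tau + 4t_0) - t_0 = \tfrac{\tau + 4t_0}{4} - t_0 = \tfrac{\tau}{4}$, which is the claimed identity. Since everything is a routine substitution into already-proven general formulas, I do not anticipate any genuine obstacle; the only mild care needed is bookkeeping the cancellation of the two logarithmic terms and confirming the antiderivative $\int \tfrac{\cos s}{2+\sin s}\,ds = \log(2+\sin s)$, both of which are elementary.
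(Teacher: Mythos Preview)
Your proposal is correct and takes essentially the same approach as the paper, which simply states that the proposition follows from Proposition~\ref{spheroidL} without writing out the details. Your computation of $\theta$, $c$, the logarithmic cancellation, and the linear time change $\rho(t)=4t$ is exactly the routine substitution the paper has in mind.
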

The random walk on spheroids is therefore built using the typical boundaries \eqref{frontsin}. At each step of the algorithm, we need to use a scale parameter $d$ in order to shrink or enlarge the spheroid size in such a way that the domains always stay in the interval $[a,b]$. The general statement concerning the scale parameter \eqref{paraml} can be improved for this particular example.

Let $m > 0$ and $0<\gamma<1$. We recall that  $a_{\gamma,x_0}$ and $b_{\gamma,x_0}$ are defined by $a_{\gamma,x} = a+ \gamma(x-a)$ and $b_{\gamma,x} = b- \gamma(b-x)$. We choose the scale parameter $d$ in such a way that it satisfies
\[
d=\left \{
\begin{array}{c @{\text{ if }} l}
    \frac{\min(1, \kappa_+)}{\Delta_m}(b_{\gamma,x_0}-x_0) &  b-x_0 \leqslant x_0-a\\[5pt]
    \frac{\min(1, \kappa_-)}{\Delta_m}(x_0-a_{\gamma,x_0})& x_0 -a \leqslant b-x_0
\end{array}
\right.
\]
with
\begin{equation*}
\Delta_m = \frac{3}{2}\left(\frac{1}{\sqrt{e}}+(1+ \max(\vert a \vert, \vert b \vert))\sqrt{m}\right)
\end{equation*}
and $\kappa_\pm$ are defined by the following equations:
\begin{equation*}
 \kappa_+(b_{\gamma,x_0}-x_0) = 2\Delta_m \sqrt{m} \text{ and } \kappa_-(x_0 -a_{\gamma,x_0}) = 2 \Delta_m \sqrt{m}.
\end{equation*}
We just note that this particular value $\Delta_m$ is an easy upper-bound of the parameter emphasized in \eqref{deltam}. We just adapted the choice of the parameters to the particular diffusion studied in this section. Even if the procedure is close to the method presented in Proposition \ref{parameterL}, we notice that such a particular choice of $\Delta_m$ permits to point out a specific value $m$ such that both $\min(1, \kappa_-)$ and $\min(1, \kappa_+)$ are equal to $1$. This value corresponds to
\begin{equation*}
 m= \left(\frac{\sqrt{\frac{1}{e}+\frac{4}{3}(b-a)(1+ \max(\vert a \vert, \vert b \vert)}-\frac{1}{\sqrt{e}}}{2(1+ \max(\vert a \vert, \vert b \vert))}\right)^2.
\end{equation*}

Using {\sc Algorithm}$_{m}$ as in Section 2.3 permits to approximate the first diffusion exit time from the interval $[a,b]$, see Figure \ref{algosin} and Figure \ref{sinresult}.
\begin{figure}[H]
   \centerline{\includegraphics[scale=0.25]{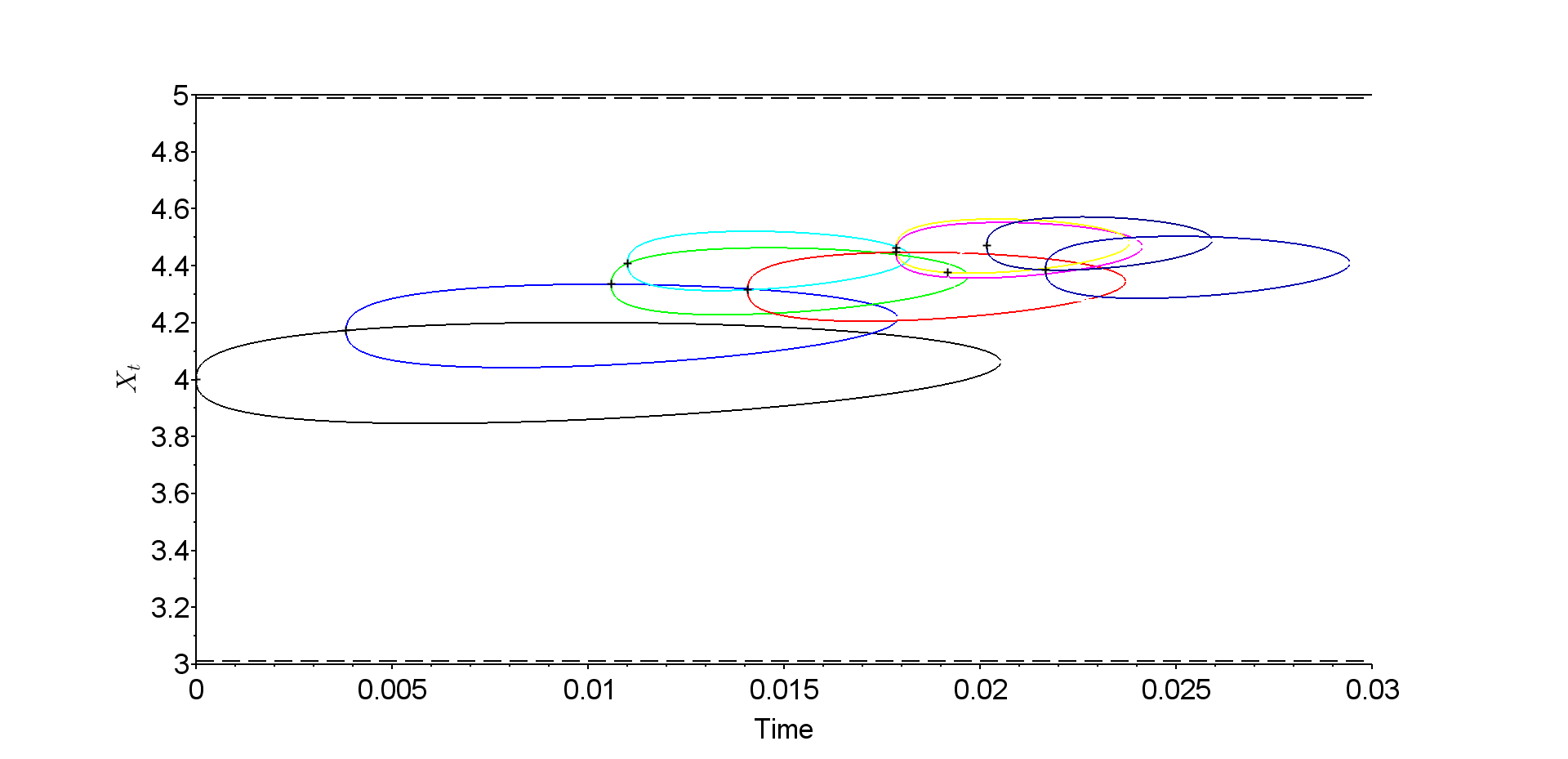}}
   \caption{\small A sample of  {\sc Algorithm}$_m$ for the diffusion process starting at $x=4$ in the interval $[3,5]$ with $\epsilon = 10^{-2}$ and $\gamma = 10^{-4}$ .}
   \label{algosin}
\end{figure}

\begin{figure}[H]
  \centerline{\includegraphics[width=6.5cm]{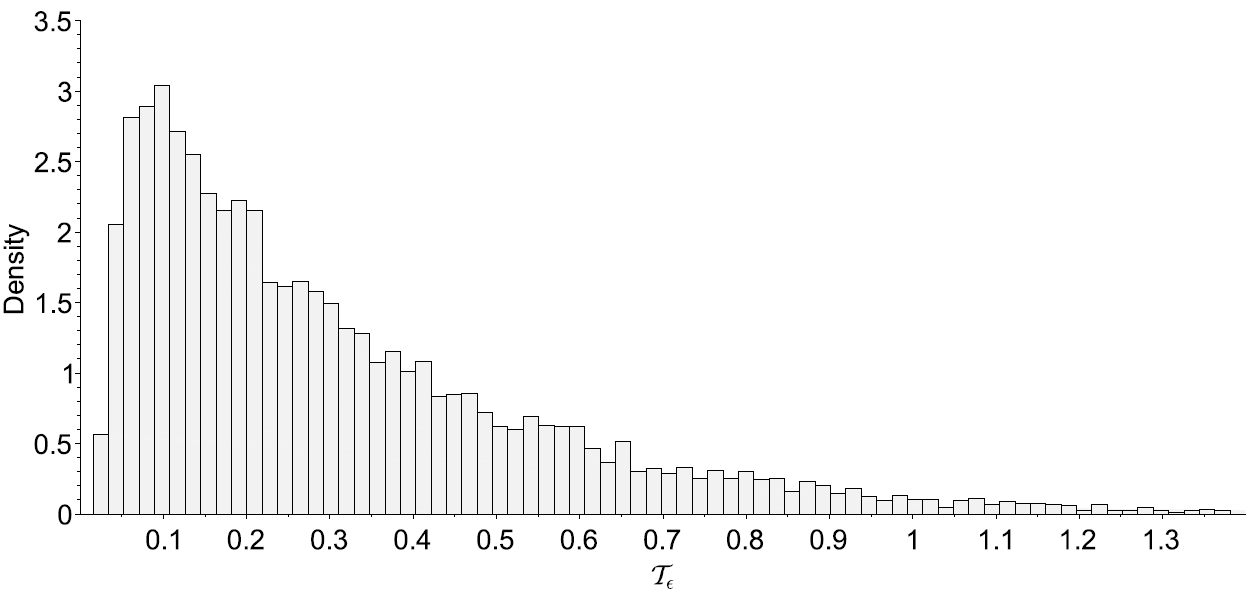}  \includegraphics[width=7.0cm]{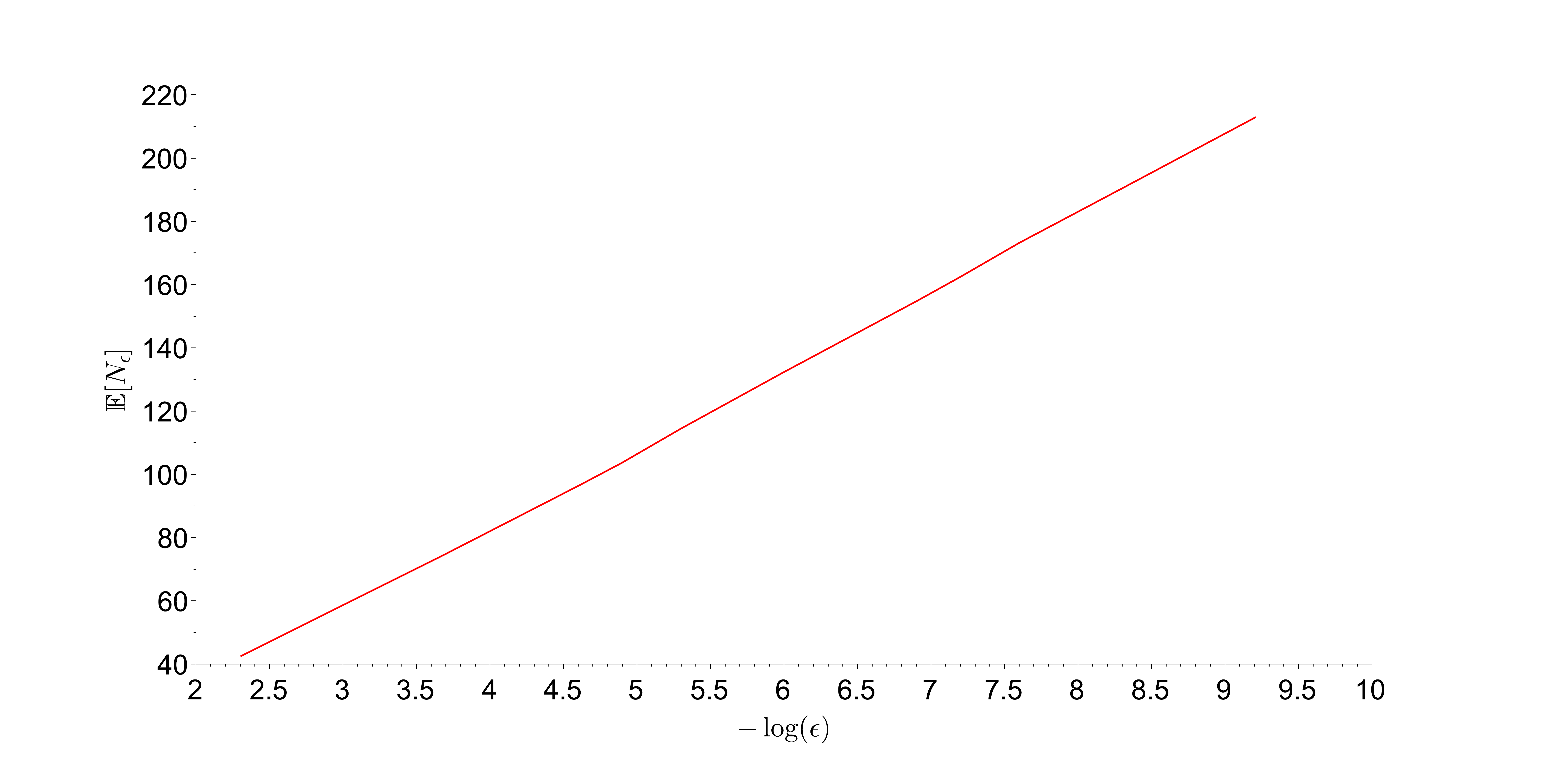}}
   \caption{\small Histogram of the outcome variable for the diffusion \eqref{sinL} with $X_0=1$, $[a,b]=[-1,2]$, $\epsilon=10^{-2}$ and $\gamma=10^{-4}$ (left). Average number of steps in {\sc Algorithm}$_m$ for the exit time of $[-1,2]$ (right, in logarithmic scale). }
   \label{sinresult}
\end{figure}
\appendix
\section{Potential theory and Markov chains}
We introduce a result coming from the potential theory and using Markov chains.\\
Let us consider a Markov chain $(X_n)_{n \in \mathbb{N}}$ defined on a state space $I$  decomposed into two distinct subsets $K$ and $\partial K$, $\partial K$ being  the  so-called frontier. Let us define $N= \inf\{n \in \mathbb{N}, X_n \in \partial K\}$ the hitting time of $\partial K$.
We assume that $N$ is a.s. finite, then the following statement holds:
\begin{proposition} 
\label{potential}
Let $G$ be a positive increasing function. 
If there exists a function $U$ such that the sequence $(H(n\wedge N, X_{n\wedge N}))_{n \in \mathbb{N}}$ is non negative and if the sequence $(H(n\wedge N, X_{n\wedge N}) + G(n\wedge N)
)_{n \in \mathbb{N}}$ represents a super-martingale adapted to the natural filtration of the considered Markov chain $(X_n)$, then  

\begin{equation*}
\mathbb{E}_x[G(N)
] \leqslant H(0,x), \quad \forall x \in K.
\end{equation*}

\end{proposition}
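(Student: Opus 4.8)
The plan is to run the classical potential-theoretic argument: apply the supermartingale property at a fixed deterministic index $n$, discard the nonnegative part, and then let $n\to\infty$, the only genuine input being the standing assumption that $N<\infty$ almost surely. Denote by $S_n:=H(n\wedge N,X_{n\wedge N})+G(n\wedge N)$ the process assumed to be a supermartingale for the natural filtration of $(X_n)$; since $0\wedge N=0$, under $\mathbb P_x$ one has $S_0=H(0,x)+G(0)$, and because $G$ is a potential-type cumulative increment one takes $G(0)=0$ (this is the convention used whenever the proposition is invoked, e.g. in the proof of Theorem \ref{meanL}).

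First I would use that the expectation of a supermartingale is nonincreasing in $n$, so that for every $n\in\mathbb N$
\[
\mathbb E_x\big[H(n\wedge N,X_{n\wedge N})\big]+\mathbb E_x\big[G(n\wedge N)\big]=\mathbb E_x[S_n]\le \mathbb E_x[S_0]=H(0,x).
\]
By the hypothesis $H(n\wedge N,X_{n\wedge N})\ge 0$, this already gives $\mathbb E_x[G(n\wedge N)]\le H(0,x)$ uniformly in $n$.

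Then comes the passage to the limit, which is the only delicate step. Since $N<\infty$ $\mathbb P_x$-almost surely, for a.e. $\omega$ the integer $n\wedge N(\omega)$ equals $N(\omega)$ for all $n$ large enough; as $G$ is increasing and $G\ge 0$, the sequence $\big(G(n\wedge N)\big)_n$ is nondecreasing and converges a.s. to $G(N)$. Monotone convergence then yields
\[
\mathbb E_x[G(N)]=\lim_{n\to\infty}\mathbb E_x[G(n\wedge N)]\le H(0,x),
\]
which is the announced inequality; alternatively, Fatou's lemma applied to the nonnegative variables $G(n\wedge N)$ gives the same bound and uses only $G\ge 0$. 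I expect the main — and essentially the only — obstacle to be this limit interchange: one cannot appeal to dominated convergence because $G$ need not be bounded, so the argument must rest on monotonicity (or Fatou) together with the a.s. finiteness of $N$, which is precisely why that finiteness is imposed as a hypothesis and why, in the application to {\sc Algorithm}$_m$, it is first checked via Lemma \ref{Nfinite}.
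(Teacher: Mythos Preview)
Your argument is correct and matches exactly what the paper indicates: the paper does not give a full proof but states that it is ``essentially based on the optimal stopping theorem and on the monotone convergence theorem'', which is precisely your two-step scheme (supermartingale inequality at the bounded time $n\wedge N$, then monotone convergence using $G\ge 0$, $G$ increasing and $N<\infty$ a.s.). Your remark that the stated conclusion implicitly requires $G(0)=0$ is also pertinent and consistent with how the proposition is actually used in the proof of Theorem~\ref{meanL}.
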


The proof of this classical upper-bound is left to the reader, it is essentially based on the optimal stopping theorem and on the monotone convergence theorem (see, for instance,\cite{norris}, p139).

\section{Path decomposition}
We prove in this section the two lemmas used in the proof of Proposition \ref{minderiv}. Let us just recall several notations. The process $X^x$ corresponds to the linear diffusion (3.1) with the starting value $x$; $\tau^x$ (resp. $\tau_-^x$ and $\tau_+^x$) corresponds to the first exit time of the interval $]a,b[$ (resp. $]a,b_h[$ and $]a_h,b[$), with $a_h$ and $b_h$ defined in \eqref{ahbh}.

We also recall that $(Y^\pm_t)$ stand for the solutions of the shifted SDEs \eqref{ksieds} and \eqref{majprobaTheta}. Their exit time of the interval $]a,b[$ is denoted $\mathcal{T}(Y^\pm)$ and the first passage times through the level $a$ is denoted by  $\mathcal{T}_a(Y^\pm)$.

\begin{lemma}
\label{lemmappendix}
Let $E_{ab}$ and $E_{ba}$ the two events defined by
\[
E_{ab}:=\{ \tau^x_-\le 1, X^x_{\tau^x_-}=a,\, \mathcal{T}(Y^-)\le 1, Y^-_{\mathcal{T}(Y^-)}=b \},
\] \[
E_{ba}:=\{ \tau^{x+h}_+\le 1, X^{x+h}_{\tau^{x+h}_+}=b,\, \mathcal{T}(Y^+)\le 1, Y^+_{\mathcal{T}(Y^+)}=a \}.
\]
Then $E_{ab}\cap E_{ba}=\emptyset$.

\end{lemma}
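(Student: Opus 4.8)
The plan is to argue by contradiction: suppose both events $E_{ab}$ and $E_{ba}$ occur simultaneously, and extract incompatible orderings of the various first-passage times. The key fact to exploit is that $X^x$ and $X^{x+h}$ are two strong solutions of the same SDE \eqref{edsl} driven by the same Brownian motion with $h>0$, so by the coupling/comparison property they satisfy $X^x_t\le X^{x+h}_t$ for all $t\ge 0$, with equality propagating forward once it occurs. Similarly, the shifted processes $Y^-$ and $Y^+$ (defined by \eqref{eq:edsmark} and \eqref{eq:edsmark2}) are ordered consistently with their initial conditions, which lie respectively just above $a$ and just below $b$.

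First I would set up the time line. On $E_{ab}$ we have $\tau^x_-\le 1$ and $X^x_{\tau^x_-}=a$, meaning $X^x$ hits the lower level $a$ before the upper level $b_h$, and then $Y^-$ (started just above $a$ at time $\tau^x_-$) travels all the way up to $b$ by time $\tau^x_-+1$ at the latest. On $E_{ba}$, by contrast, $\tau^{x+h}_+\le 1$ with $X^{x+h}_{\tau^{x+h}_+}=b$, so $X^{x+h}$ reaches the upper level $b$ before the lower level $a_h$, and then $Y^+$ descends to $a$. The second step is to use the comparison $X^x_t\le X^{x+h}_t$ together with the definitions of $b_h$ and $a_h$ in \eqref{ahbh}: the gap $he^{\int_0^1|\alpha(s)|\,ds}$ is precisely chosen so that if $X^x$ reaches $a$ at some time $\le 1$ then $X^{x+h}$ cannot yet have reached $a_h=a+he^{\int_0^1|\alpha|}$ from below (the two paths are within $he^{\int_0^t|\alpha|}\le he^{\int_0^1|\alpha|}$ of each other on $[0,1]$), and conversely if $X^{x+h}$ reaches $b$ at a time $\le 1$ then $X^x$ has not reached $b_h$. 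This already forces the two scenarios to concern disjoint sample paths at the level of the initial diffusions — but I would want to spell out the contradiction cleanly.

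The cleanest route: on $E_{ab}$, consider the instant $\tau^x_-$; at that moment $X^x=a$, hence $X^{x+h}_{\tau^x_-}\le a+he^{\int_0^{\tau^x_-}|\alpha|}\le a_h$, so $X^{x+h}$ has already exited $]a_h,b[$ through the lower boundary (or touched $a_h$) at or before time $\tau^x_-\le 1$; in particular $X^{x+h}_{\tau^{x+h}_+}=a_h\ne b$ if $\tau^{x+h}_+\le\tau^x_-$, contradicting the requirement $X^{x+h}_{\tau^{x+h}_+}=b$ on $E_{ba}$. One must check the case $\tau^{x+h}_+<\tau^x_-$ as well, but there the symmetric argument applies: if $X^{x+h}$ hits $b$ at time $\tau^{x+h}_+\le 1$, then $X^x_{\tau^{x+h}_+}\ge b-he^{\int_0^1|\alpha|}=b_h$, so $X^x$ exits $]a,b_h[$ through the upper boundary at or before $\tau^{x+h}_+$, whence $X^x_{\tau^x_-}=b_h\ne a$, contradicting $E_{ab}$. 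Either way we reach a contradiction, so $E_{ab}\cap E_{ba}=\emptyset$.

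The main obstacle I anticipate is bookkeeping the order of the four relevant times $\tau^x_-$, $\tau^{x+h}_+$, $\mathcal{T}(Y^-)$, $\mathcal{T}(Y^+)$ and making sure the comparison inequality $|X^{x+h}_t-X^x_t|\le he^{\int_0^t|\alpha(s)|\,ds}$ (which follows from Gronwall applied to the difference, since the diffusion coefficient is the same and cancels) is invoked on the correct time interval, namely $[0,1]$ where both $\tau^x_-$ and $\tau^{x+h}_+$ live. Once that inequality and the definitions \eqref{ahbh} are in hand, the contradiction is immediate from a case split on which of $\tau^x_-$, $\tau^{x+h}_+$ is smaller; I expect no serious analytic difficulty beyond this careful case analysis.
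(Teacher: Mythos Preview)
Your argument contains the right ingredients but the case analysis is tangled. You split on ``$\tau^{x+h}_+\le\tau^x_-$'' versus ``$\tau^{x+h}_+<\tau^x_-$'', which is not a dichotomy, and in the first branch you assert that $X^{x+h}$ exits $]a_h,b[$ \emph{through the lower boundary} merely from $X^{x+h}_{\tau^x_-}\le a_h$. That step is not yet justified: you must also rule out that $X^{x+h}$ touched $b$ at some earlier time $s<\tau^x_-$. The missing observation is precisely that on $E_{ab}$ one has $X^x_s<b_h$ for all $s<\tau^x_-$, so the exact relation $X^{x+h}_s=X^x_s+he^{\int_0^s\alpha(u)\,du}$ together with $s\le 1$ gives $X^{x+h}_s<b_h+he^{\int_0^1|\alpha|}=b$ on that interval. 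Once you insert this, the first branch alone already yields the contradiction, and no case split is needed. The same remark applies symmetrically to your second branch.

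The paper organises the same computation more cleanly: instead of comparing $\tau^x_-$ and $\tau^{x+h}_+$ directly, it introduces the full-interval exit times $\tau^x$ and $\tau^{x+h}$ and shows $E_{ab}\subset\{\tau^x<\tau^{x+h}\}$ and $E_{ba}\subset\{\tau^{x+h}<\tau^x\}$, whence disjointness is immediate. Concretely, on $E_{ab}$ one has $\tau^x_-=\tau^x$ (the exit of $X^x$ from $]a,b_h[$ through $a$ is also its exit from $]a,b[$), and the observation above shows $X^{x+h}_s\in\,]a,b[$ for all $s\le\tau^x$, hence $\tau^{x+h}>\tau^x$. This avoids any branching. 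Your approach reaches the same conclusion and uses the same key identity $X^{x+h}_s-X^x_s=he^{\int_0^s\alpha}$ (you only used the upper bound with $|\alpha|$, which suffices), but the paper's route via $\tau^x,\tau^{x+h}$ is shorter and sidesteps the bookkeeping that tripped up your presentation.
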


\begin{proof}
On the event $E_{ab}$ we know that $\tau_-^x \leqslant 1$ and consequently $X_s^x \in [a,b_h[ \subset [a,b[$ (for all $s<\tau_-^x$. In particular we observe that $\tau_-^x=\tau^x$. Moreover

\begin{equation*}
X^{x+h}_s = X^x_s + he^{\int^s_0 \alpha(u)du}, \quad \forall s\ge 0.
\end{equation*}
Hence
\begin{equation*}
X^{x+h}_s \in [a + he^{\int^s_0 \alpha(u)du}, b_h + he^{\int^s_0 \alpha(u)du}[, \quad \forall s\ge 0.
\end{equation*}
Since $b_h + he^{\int^s_0 \alpha(u)du} = b - h(e^{\int_0^1 \vert \alpha(u) \vert du} - e^{\int^s_0 \alpha(u)du}) < b$ for $s\le 1$, we obtain
\begin{equation*}
X^{x+h}_s \in  [a + he^{\int^s_0 \alpha(u)du}, b[\subset ]a,b[,\quad \forall s\le 1.
\end{equation*}
In conclusion $E_{ab} \subset \{\tau^x < \tau^{x+h}\}$.\\
Using similar arguments, we obtain  $E_{ba} \subset \{\tau^{x+h} < \tau^x\}$.\\
The easy observation $\{\tau^x < \tau^{x+h}\} \cap \{\tau^{x+h} < \tau^x\} = \emptyset$ implies the announced statement.
\end{proof}

\begin{lemma}
\label{lemmappendixbis}
 $E_{ab}\cup E_{ba}\subset \bigcap_{t\ge 2}\{X^{x+h}_{\tau^{x+h}\wedge t}- X^x_{\tau^x\wedge t}=b-a \}$.
 \end{lemma}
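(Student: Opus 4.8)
The plan is to show that on each of the two events $E_{ab}$ and $E_{ba}$, the two diffusions $X^x$ and $X^{x+h}$ have, by time $2$, already exited the interval at opposite endpoints, with the lower-starting path exiting at $a$ and the higher-starting path exiting at $b$; consequently the difference of the stopped processes equals $a-b$ or $b-a$ depending on which event we are on, but in both cases its absolute value — and indeed, after keeping track of signs, the quantity $X^{x+h}_{\tau^{x+h}\wedge t}-X^x_{\tau^x\wedge t}$ — equals $b-a$. The key point is that once a path has exited, it is frozen at its exit value (the stopped process is constant), so it suffices to locate the exit positions and check that both exit times are at most $2$.

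First I would treat $E_{ab}$. On this event, $\tau^x_-\le 1$ and $X^x_{\tau^x_-}=a$; as established in the proof of Lemma \ref{lemmappendix}, this forces $\tau^x=\tau^x_-\le 1$, so $X^x_{\tau^x\wedge t}=a$ for every $t\ge 1$, a fortiori for $t\ge 2$. For the upper path, recall $X^{x+h}_s=X^x_s+he^{\int_0^s\alpha(u)\,du}$; after the linear diffusion starting at $a$ is restarted via the strong Markov property at time $\tau^x_-$, its continuation is distributed as the shifted SDE solution, which on $E_{ab}$ is exactly the event $\{\mathcal{T}(Y^-)\le 1,\ Y^-_{\mathcal{T}(Y^-)}=b\}$. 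The coupling $X^{x+h}=X^x+h e^{\int_0^\cdot\alpha}$ places $X^{x+h}$ above the restarted path that reaches $b$, so $X^{x+h}$ itself reaches $b$ (from below, without having previously left $]a,b[$ by the computation $b_h+he^{\int_0^s\alpha}<b$ from Lemma \ref{lemmappendix}) at a time $\tau^{x+h}\le \tau^x_-+1\le 2$; hence $X^{x+h}_{\tau^{x+h}\wedge t}=b$ for all $t\ge 2$. Therefore $X^{x+h}_{\tau^{x+h}\wedge t}-X^x_{\tau^x\wedge t}=b-a$ for all $t\ge 2$, i.e. $E_{ab}\subset\bigcap_{t\ge 2}\{X^{x+h}_{\tau^{x+h}\wedge t}-X^x_{\tau^x\wedge t}=b-a\}$.

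The event $E_{ba}$ is handled by the symmetric argument: there $\tau^{x+h}_+\le 1$ with $X^{x+h}_{\tau^{x+h}_+}=b$, which forces $\tau^{x+h}=\tau^{x+h}_+\le 1$ so $X^{x+h}_{\tau^{x+h}\wedge t}=b$ for $t\ge 1$; and after restarting the higher path at $b$, the conditions $\mathcal{T}(Y^+)\le 1$, $Y^+_{\mathcal{T}(Y^+)}=a$ together with the coupling (now $X^x$ lies below the restarted path that reaches $a$) give $\tau^x\le 2$ with $X^x_{\tau^x}=a$, so $X^x_{\tau^x\wedge t}=a$ for $t\ge 2$. Again the difference equals $b-a$ for all $t\ge 2$. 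Taking the union of the two inclusions yields the statement. The only delicate point is the bookkeeping that keeps the restarted diffusion inside $]a,b[$ until the claimed hitting time (so that $\tau$ genuinely coincides with the hitting time of the named endpoint and the stopped process is frozen at the right value), but this is exactly the estimate $b_h+he^{\int_0^s\alpha(u)\,du}<b$ for $s\le 1$ already used in Lemma \ref{lemmappendix}, applied once more to the shifted diffusion; no new computation is needed beyond invoking it with the translated coefficients.
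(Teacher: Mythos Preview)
Your proof follows essentially the same approach as the paper: on $E_{ab}$ you show both exits occur by time $2$ at opposite endpoints by using the comparison $X^{x+h}_{\tau^x_-+s}\ge Y^-_s$ (which the paper phrases as ``continuity of the paths with respect to the initial condition''), and then treat $E_{ba}$ symmetrically. One small inaccuracy: your final paragraph identifies the ``delicate point'' as the estimate $b_h+he^{\int_0^s\alpha}<b$ from Lemma~\ref{lemmappendix}, but that estimate only controls $X^{x+h}$ on $[0,\tau^x_-]$; what actually prevents $X^{x+h}$ from exiting through $a$ on $[\tau^x_-,\tau^x_-+\mathcal{T}(Y^-)]$ is the comparison $X^{x+h}_{\tau^x_-+s}\ge Y^-_s\ge a$ you already invoked, since $Y^-$ remains in $[a,b]$ until $\mathcal{T}(Y^-)$. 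With that minor correction the argument is complete and matches the paper.
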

\begin{proof}
Let us prove that $E_{ab}\subset \{X^{x+h}_{\tau^{x+h}\wedge t}- X^x_{\tau^x\wedge t}=b-a \}$, the other inclusion can be obtained in a similar way.On the event $E_{ab}$ we obviously observe that $X^x_{\tau_x \wedge t} = a$.
By construction, we have $X^{x+h}_{\tau_-^x} \geqslant Y^-_0$, and using the continuity of the paths with respect to the initial condition, we obtain
$X^{x+h}_{\tau^x_- +s} \geqslant Y_s^-,\quad \forall s\ge 0.$
the property $Y^-_{\mathcal{T}(Y^-)}=b$, implies
$X^{x+h}_{\tau^x_- + \mathcal{T}(Y^-)} \geqslant Y^-_{\mathcal{T}(Y^-)}=b.$\\
Consequently $\tau^{x+h}\le \mathcal{T}(Y^-)+\tau_-^x \le 2$ and therefore, under the hypothesis $t\geqslant 2$ we have $E_{ab}\subset \{X^{x+h}_{\tau^{x+h}\wedge t}- X^x_{\tau^x\wedge t}=b-a \}$.\\
\end{proof}

\bibliographystyle{plain}

\end{document}